\def\NAT@def@citea{\def\@citea{\NAT@separator}}
\theoremstyle{plain}
\newtheorem{theorem}{Theorem}[section]
\newtheorem{lemma}[theorem]{Lemma}
\newtheorem{corollary}[theorem]{Corollary}
\newtheorem{proposition}[theorem]{Proposition}
\theoremstyle{definition}
\newtheorem{definition}[theorem]{Definition}
\theoremstyle{remark}
\newtheorem{remark}{Remark}
\DeclareMathOperator*{\einf}{ess\ inf}
\begin{document}

\articletype{Research Paper}

\title{Nehari manifold approach for a singular multi-phase variable exponent problem}

\author{\name{Mustafa Avci\thanks{CONTACT M.~Avci. Email:  mavci@athabascau.ca (primary) \& avcixmustafa@gmail.com}}
\affil{Faculty of Science and Technology, Applied Mathematics, Athabasca University, AB, Canada}}

\maketitle

\begin{abstract}
This paper is concerned with a singular multi-phase problem with variable singularities. The main tool used is the Nehari manifold approach. Existence of at least two positive solutions with positive-negative energy levels are obtained. For the illustration of the main results, an application from fluid flow in a heterogeneous porous medium is provided.
\end{abstract}

\begin{keywords}
Multi-phase problem; variable singularity; Hardy-type potential; Nehari manifold approach; positive-negative energy level;  Musielak-Orlicz Sobolev space; fluid flow in heterogeneous porous medium.
\end{keywords}

\begin{amscode}
35A01; 35A15; 35D30; 35J66; 35J75
\end{amscode}

\section{Introduction}
In this article, we study the following multi-phase singular Dirichlet problem

\begin{equation}\label{e1.1}
\begin{cases}
\begin{array}{rlll}
-\mathcal{A}(u)+\mathcal{B}(u)&=m_1(x)|u|^{s(x)-2}u+\lambda m_2(x){u^{-\beta(x)}} \text{ in }\Omega, \\
u&>0  \text{ in }\Omega,\\
u&=0  \text{ on }\partial \Omega,\tag{$\mathcal{P_\lambda}$}
\end{array}
\end{cases}
\end{equation}
with
\begin{align*}
\mathcal{A}(u):= \mathrm{div}(|\nabla u|^{p(x)-2}\nabla u+\mu_1(x)|\nabla u|^{q(x)-2}\nabla u+\mu_2(x)|\nabla u|^{r(x)-2}\nabla u),
\end{align*}
and
\begin{align*}
\mathcal{B}(u):=|x|^{-p(x)}|u|^{p(x)-2}u+\mu_1(x)|x|^{-q(x)}|u|^{q(x)-2}u+\mu_2(x)|x|^{-r(x)}|u|^{r(x)-2}u,
\end{align*}
where $\Omega$ is a bounded domain in $\mathbb{R}^N$ $(N\geq2)$ with Lipschitz boundary; $\beta:\overline{\Omega}\rightarrow (0,1)$ is a continuous function; $m_1, m_2 \in C(\overline{\Omega })$ are
non-negative weight functions with compact support in $\Omega$; $p,q,r,s \in C_+(\overline{\Omega })$ with $1<p(x)<q(x)<r(x)<s(x)$; $0\leq \mu_1(\cdot),\mu_2(\cdot)\in L^\infty(\Omega)$; and $\lambda>0$ is a parameter.\\

The problem (\ref{e1.1}) has a strong potential to model various real-world phenomena due to its versatile terms as explained below.\\
The operator
\begin{equation}\label{e1.2a}
\mathrm{div}(|\nabla u|^{p(x)-2}\nabla u+\mu_1(x)|\nabla u|^{q(x)-2}\nabla u+\mu_2(x)|\nabla u|^{r(x)-2}\nabla u)
\end{equation}
 which governs anisotropic and heterogeneous diffusion associated with the energy functional
\begin{equation}\label{e1.2b}
u\to \int_\Omega\left(\frac{|\nabla u|^{p(x)}}{p(x)}+\mu_1(x)\frac{|\nabla u|^{q(x)}}{q(x)}+\mu_2(x)\frac{|\nabla u|^{r(x)}}{r(x)}\right)dx,\,\ u \in W_0^{1,\mathcal{T}}(\Omega)
\end{equation}
is called a "multi-phase" operator because it encapsulates three different types of elliptic behavior within the same framework. This multi-phase behavior allows the model to capture phenomena where materials or processes exhibit different characteristics in different regions—for example, materials that are harder in some areas and softer in others.\\
The terms $\frac{|u|^{p(x)-2}u}{|x|^{p(x)}}$, $\mu_1(x)\frac{|u|^{q(x)-2}u}{|x|^{q(x)}}$ and $\mu_2(x)\frac{|u|^{r(x)-2}u}{|x|^{r(x)}}$ could be seen as distance-dependent singularities at different nonlinear growths, which can model localized phenomena (e.g., near wells, defects, or critical points).\\
The term $\lambda m_2(x){u^{-\beta(x)}}$ introduces singular behavior as $u \to 0$, capturing extreme effects like strong reactions or concentrated forces.\\
The term $m_1(x)|u|^{s(x)-2}u$ describes nonlinear external force such us nonlinear fluid responses, or heat sources varying spatially depending on $m_1(x)$.\\

The functional of type (\ref{e1.2b}) was introduced in \cite{de2019regularity} for the constant exponents, where the authors obtain regularity results for multi-phase variational problems. Then, in \cite{vetro2024priori}, the author study Dirichlet problems driven by multi-phase operators with variable exponents, and provides a priori upper bounds for the weak solutions. Recently, in \cite{dai2024regularity}, the authors study the multi-phase operators with variable exponents and analyze the associated Musielak-Orlicz Sobolev spaces, extend Sobolev embedding results, and establish key regularity properties. Additionally, they prove existence and uniqueness results for Dirichlet problems with gradient-dependent nonlinearity and derive local regularity estimates.\\

We also would like to provide a historical perspective for the development of the double-phase operators associated with the energy functional
\begin{equation}\label{e1.2bc}
u\to \int_\Omega\left(\frac{|\nabla u|^{p}}{p}+\mu(x)\frac{|\nabla u|^{q}}{q}\right)dx,\,\ u \in W_0^{1,\mathcal{H}}(\Omega).
\end{equation}
The functional of type (\ref{e1.2bc}) was introduced in \cite{zhikov1987averaging}. Subsequently, many studies have been devoted in this direction (see, e.g. \cite{baroni2015harnack,baroni2018regularity,colombo2015bounded,colombo2015regularity,marcellini1991regularity,marcellini1989regularity}) due to its applicability into different disciplines.\\

Some related papers that utilized the Nehari manifold approach for constant-variable exponent double phase operators are as follows.\\
In \cite{wulong2101existence}, the authors study the following quasilinear elliptic equation
\begin{equation}\label{e1.2d}
\begin{cases}
\begin{array}{rlll}
&-\mathrm{div}(|\nabla u|^{p-2} \nabla u+\mu(x)|\nabla u|^{q-2}\nabla u)=a(x)|u|^{-\gamma}+ \lambda {u^{r-1}} \text{ in }\Omega, \\
&u=0  \text{ on }\partial \Omega,
\end{array}
\end{cases}
\end{equation}
involving the constant exponent double phase operator and a singular-parametric right-hand side. Using the fibering method and Nehari manifold, they prove the existence of at least two weak solutions for small parameters, where $a \in L^{\infty}(\overline{\Omega })$ is non-negative weight function; $1<p<N$, $1<p<q<r<p^*$; $0<\gamma<1$; $0\leq \mu(\cdot)\in L^\infty(\Omega)$; and $\lambda>0$ is a parameter.\\

In \cite{crespo2024nehari},  the authors study the following quasilinear elliptic equations with a variable exponent double phase operator and superlinear right-hand sides
\begin{equation}\label{e1.2e}
\begin{cases}
\begin{array}{rlll}
&-\mathrm{div}(|\nabla u|^{p(x)-2} \nabla u+\mu(x)|\nabla u|^{q(x)-2}\nabla u)=f(x,u) \text{ in }\Omega, \\
&u=0  \text{ on }\partial \Omega,
\end{array}
\end{cases}
\end{equation}
and they prove the existence of positive, negative, and sign-changing solutions using the Nehari manifold approach, also providing insights into the nodal domains, where $p,q \in C(\overline{\Omega })$ with $1<p(x)<q(x)<p^*(x)$, $1<p(x)<N$, and $p(\cdot)$ satisfies a monotonicity condition. We also refer the interested reader to the studies \cite{mashiyev2010nehari,saiedinezhad2015fibering,saoudi2019singular} and the references there in for broad applications of the Nehari manifold approach.\\

Motivated mainly by the problem studied in \cite{mashiyev2010nehari} and the papers mentioned above, we study a multi-phase singular Dirichlet problem on the Nehari
manifold in a newly introduced Musielak-Orlicz Sobolev space $W_0^{1,\mathcal{T}}(\Omega)$. The problem (\ref{e1.1}) contains two kinds of singularities: a distance-dependent singularity governed by a Hardy-type potential, and a term with a blow-up behavior as $u\to 0$. Inasmuch as including these two types of singularities in the same setting makes (\ref{e1.1}) more versatile in terms of its applicability of modelling different phenomena, and hence interesting, it has been also challenging to keep these singularities under the control while carrying out the necessary analysis. To the best of our knowledge, this is the first work dealing with the variable exponent multi-phase operator given in the form (\ref{e1.1}) where the Nehari manifold approach is applied. In this regard, the paper is original, and thus, it is expected to be beneficial in the related field.\\

The paper is organised as follows. In Section 2, we first provide some background for the theory of variable Sobolev spaces $W_{0}^{1,p(x)}(\Omega)$ and the Musielak-Orlicz Sobolev space $W_0^{1,\mathcal{T}}(\Omega)$, and then obtain some crucial auxiliary results. In Section 3, we set up the variational framework for problem (\ref{e1.1}) and obtain the main results; that is, we obtain two positive solutions. In Section 4, to illustrate the main results, we provide a real-world application from the fluid flow in heterogeneous porous medium, groundwater flow in heterogeneous media.

\section{Mathematical Background and Auxiliary Results}

We start with some basic concepts of variable Lebesgue-Sobolev spaces. For more details, and the proof of the following propositions, we refer the reader to \cite{cruz2013variable,diening2011lebesgue,edmunds2000sobolev,fan2001spaces,radulescu2015partial}.
\begin{equation*}
C_{+}\left( \overline{\Omega }\right) =\left\{h\in C\left( \overline{\Omega }\right) ,\text{ } h\left( x\right) >1 \text{ for all\ }x\in
\overline{\Omega }\right\} .
\end{equation*}
For $h\in C_{+}( \overline{\Omega }) $ denote
\begin{equation*}
h^{-}:=\underset{x\in \overline{\Omega }}{\min }h(x) \leq h(x) \leq h^{+}:=\underset{x\in \overline{\Omega }}{\max}h(x) <\infty .
\end{equation*}
For any $h\in C_{+}\left(\overline{\Omega}\right) $, we define \textit{the
variable exponent Lebesgue space} by
\begin{equation*}
L^{h(x)}(\Omega) =\left\{ u\mid u:\Omega\rightarrow\mathbb{R}\text{ is measurable},\int_{\Omega }|u(x)|^{h(x)}dx<\infty \right\}.
\end{equation*}
Then, $L^{h(x)}(\Omega)$ endowed with the norm
\begin{equation*}
|u|_{h(x)}=\inf \left\{ \lambda>0:\int_{\Omega }\left\vert \frac{u(x)}{\lambda }
\right\vert^{h(x)}dx\leq 1\right\} ,
\end{equation*}
becomes a Banach space.
The convex functional $\rho :L^{h(x)}(\Omega) \rightarrow\mathbb{R}$ defined by
\begin{equation*}
\rho(u) =\int_{\Omega }|u(x)|^{h(x)}dx,
\end{equation*}
is called modular on $L^{h(x)}(\Omega)$.
\begin{proposition}\label{Prop:2.2} If $u,u_{n}\in L^{h(x)}(\Omega)$, we have
\begin{itemize}
\item[$(i)$] $|u|_{h(x)}<1 ( =1;>1) \Leftrightarrow \rho(u) <1 (=1;>1);$
\item[$(ii)$] $|u|_{h(x)}>1 \implies |u|_{h(x)}^{h^{-}}\leq \rho(u) \leq |u|_{h(x)}^{h^{+}}$;\newline
$|u|_{h(x)}\leq1 \implies |u|_{h(x)}^{h^{+}}\leq \rho(u) \leq |u|_{h(x)}^{h^{-}};$
\item[$(iii)$] $\lim\limits_{n\rightarrow \infty }|u_{n}-u|_{h(x)}=0\Leftrightarrow \lim\limits_{n\rightarrow \infty }\rho (u_{n}-u)=0 \Leftrightarrow \lim\limits_{n\rightarrow \infty }\rho (u_{n})=\rho(u)$.
\end{itemize}
\end{proposition}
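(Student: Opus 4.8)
The plan is to reduce every assertion to one auxiliary identity: for each nonzero $u \in L^{h(x)}(\Omega)$ one has
\[
\rho\!\left(\frac{u}{|u|_{h(x)}}\right)=1 .
\]
To establish it I would fix $u\neq 0$ and examine $\varphi(\lambda):=\rho(u/\lambda)=\int_{\Omega}|u(x)|^{h(x)}\lambda^{-h(x)}\,dx$ for $\lambda\in(0,\infty)$. For each $x$ the integrand is continuous and strictly decreasing in $\lambda$, so $\varphi$ is non-increasing; since $h^{+}<\infty$, on a neighbourhood $[\lambda_{0}/2,2\lambda_{0}]$ of any $\lambda_{0}>0$ the factor $\lambda^{-h(x)}$ is bounded by a constant, whence $|u(x)|^{h(x)}\lambda^{-h(x)}\le C\,|u(x)|^{h(x)}\in L^{1}(\Omega)$ and the dominated convergence theorem makes $\varphi$ continuous on $(0,\infty)$. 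As $\varphi(\lambda)\to 0$ when $\lambda\to\infty$ and $\varphi(\lambda)\to\infty$ when $\lambda\to 0^{+}$, the intermediate value theorem produces a point where $\varphi=1$; because $|u|_{h(x)}$ is by definition the infimum of $\{\lambda:\varphi(\lambda)\le 1\}$ and $\varphi$ is continuous and non-increasing, this point is precisely $|u|_{h(x)}$, which is the claimed identity. This continuity step, where the hypothesis $h^{+}<\infty$ is indispensable, is the technical heart of the argument.

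With the identity available, parts $(i)$ and $(ii)$ become elementary. Writing $t:=|u|_{h(x)}$ for $u\neq 0$, I would factor
\[
\rho(u)=\int_{\Omega}|u(x)|^{h(x)}\,dx=\int_{\Omega}t^{\,h(x)}\left|\frac{u(x)}{t}\right|^{h(x)}dx,
\]
and bound $t^{h(x)}$ using $t^{h^{-}}\le t^{h(x)}\le t^{h^{+}}$ when $t\ge 1$ and the reversed inequalities when $t\le 1$. Since $\int_{\Omega}|u/t|^{h(x)}\,dx=\rho(u/t)=1$, this yields at once the two inequality chains of $(ii)$. Part $(i)$ follows from the same factorization: $t<1$ forces every $t^{h(x)}<1$ and hence $\rho(u)<1$, $t=1$ gives $\rho(u)=1$ directly, and $t>1$ gives $\rho(u)>1$; as the three cases for $t$ and the three for $\rho(u)$ are each mutually exclusive and exhaustive, every implication reverses, producing the stated equivalences (the case $u=0$ being trivial).

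Finally I would derive $(iii)$ by applying $(ii)$ to $v_{n}:=u_{n}-u$. Once $|v_{n}|_{h(x)}\le 1$ one has $|v_{n}|_{h(x)}^{h^{+}}\le\rho(v_{n})\le|v_{n}|_{h(x)}^{h^{-}}$, and squeezing through these two powers shows $|u_{n}-u|_{h(x)}\to 0\Leftrightarrow\rho(u_{n}-u)\to 0$, which is the first equivalence. For the remaining link, the implication $\rho(u_{n}-u)\to 0\Rightarrow\rho(u_{n})\to\rho(u)$ I would obtain from continuity of the modular along norm-convergent sequences, itself a consequence of the elementary estimate $\big||a+b|^{h(x)}-|a|^{h(x)}\big|\le\varepsilon|a|^{h(x)}+C_{\varepsilon}|b|^{h(x)}$ integrated over $\Omega$, which gives $\limsup_{n}|\rho(u_{n})-\rho(u)|\le\varepsilon\,\rho(u)$ for every $\varepsilon>0$. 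The converse implication is the genuinely delicate point of the proposition and the step I expect to require the most care: recovering $\rho(u_{n}-u)\to 0$ from $\rho(u_{n})\to\rho(u)$ alone cannot rest on the modular limit by itself and needs a finer tool such as a Brezis--Lieb type argument under almost-everywhere convergence; this is where I would concentrate the effort.
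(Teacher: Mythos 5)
The paper does not prove this proposition at all: it is quoted from the standard references on variable exponent spaces (Fan--Zhao et al.), so there is no in-paper argument to compare against. Your route is the classical one and is correct where it can be: the unit-ball identity $\rho(u/|u|_{h(x)})=1$ for $u\neq 0$, proved via continuity and strict monotonicity of $\lambda\mapsto\rho(u/\lambda)$ (your use of $h^{+}<\infty$ and dominated convergence is exactly right, and the limits at $0^{+}$ and $\infty$ follow from $\lambda^{-h(x)}\geq\lambda^{-h^{-}}$ for $\lambda<1$ and $\lambda^{-h(x)}\leq\lambda^{-h^{-}}$ for $\lambda>1$), then the factorization $\rho(u)=\int_{\Omega}t^{h(x)}|u/t|^{h(x)}dx$ with $t=|u|_{h(x)}$ to get $(ii)$, trichotomy to reverse the implications in $(i)$, the squeeze for the first equivalence in $(iii)$, and the Young-type estimate $\bigl||a+b|^{h(x)}-|a|^{h(x)}\bigr|\leq\varepsilon|a|^{h(x)}+C_{\varepsilon}|b|^{h(x)}$ for the implication $\rho(u_{n}-u)\to 0\Rightarrow\rho(u_{n})\to\rho(u)$. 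All of that is complete and sound.

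The one step you leave open --- recovering $\rho(u_{n}-u)\to 0$ from $\rho(u_{n})\to\rho(u)$ --- is not merely delicate: as literally stated it is false, and your instinct that ``the modular limit by itself'' cannot suffice is correct. Take $h\equiv 2$, let $(e_{n})$ be an orthonormal sequence in $L^{2}(\Omega)$ and $u=e_{1}$; then $\rho(e_{n})=1=\rho(u)$ for all $n$ but $\rho(e_{n}-u)=2$ for $n\geq 2$. So no proof of that implication exists without extra hypotheses. The correct classical statements are either the special case $u=0$ (where $\rho(u_{n})\to 0\Leftrightarrow|u_{n}|_{h(x)}\to 0$ does follow from your part $(ii)$ by the squeeze), or the version with almost-everywhere convergence added, where a Brezis--Lieb argument gives $\rho(u_{n})-\rho(u_{n}-u)\to\rho(u)$ and hence the desired conclusion, or the version combining $u_{n}\rightharpoonup u$ with uniform convexity of the modular. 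You should state explicitly which of these you are proving; as it stands the third equivalence in $(iii)$ should be regarded as a misstatement inherited from the literature rather than a provable claim, and the rest of the paper only ever uses the directions you have actually established.
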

\begin{proposition}\label{Prop:2.2bb}
Let $h_1(x)$ and $h_2(x)$ be measurable functions such that $h_1\in L^{\infty}(\Omega )$ and $1\leq h_1(x)h_2(x)\leq \infty$ for a.e. $x\in \Omega$. Let $u\in L^{h_2(x)}(\Omega ),~u\neq 0$. Then
\begin{itemize}
\item[$(i)$] $\left\vert u\right\vert _{h_1(x)h_2(x)}\leq 1\text{\ }\Longrightarrow
\left\vert u\right\vert_{h_1(x)h_2(x)}^{h_1^{+}}\leq \left\vert \left\vert
u\right\vert ^{h_1(x)}\right\vert_{h_2(x)}\leq \left\vert
u\right\vert _{h_1(x)h_2(x)}^{h_1^{-}}$
\item[$(ii)$] $\left\vert u\right\vert_{h_1(x)h_2(x)}\geq 1\ \Longrightarrow \left\vert
u\right\vert_{h_1(x)h_2(x)}^{h_1^{-}}\leq \left\vert \left\vert u\right\vert^{h_1(x)}\right\vert_{h_2(x)}\leq \left\vert u\right\vert_{h_1(x)h_2(x)}^{h_1^{+}}$
\item[$(iii)$] In particular, if $h_1(x)=h$ is constant then
\begin{equation*}
\left\vert \left\vert u\right\vert^{h}\right\vert_{h_2(x)}=\left\vert u\right\vert _{hh_2(x)}^{h}.
\end{equation*}
\end{itemize}
\end{proposition}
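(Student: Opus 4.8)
The plan is to exploit the elementary but decisive identity that the modular of $|u|^{h_1(x)}$ in $L^{h_2(x)}(\Omega)$ coincides with the modular of $u$ in $L^{h_1(x)h_2(x)}(\Omega)$, namely
$\int_\Omega \bigl(|u(x)|^{h_1(x)}\bigr)^{h_2(x)}\,dx = \int_\Omega |u(x)|^{h_1(x)h_2(x)}\,dx$,
and then to combine it with the norm--modular equivalence of Proposition \ref{Prop:2.2}$(i)$ together with the lattice (monotonicity) and positive-homogeneity properties of the Luxemburg norm. Throughout I would abbreviate $A := |u|_{h_1(x)h_2(x)}$ and $B := \bigl|\,|u|^{h_1(x)}\,\bigr|_{h_2(x)}$; since $u\neq 0$, $A$ is a finite positive number.

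First I would normalise. Applying Proposition \ref{Prop:2.2}$(i)$ to $u/A$ gives $\int_\Omega (|u|/A)^{h_1(x)h_2(x)}\,dx = 1$. Rewriting the integrand as $\bigl[(|u|/A)^{h_1(x)}\bigr]^{h_2(x)}$ shows that the function $w := (|u|/A)^{h_1(x)} = |u|^{h_1(x)}/A^{h_1(x)}$ has modular equal to $1$ in $L^{h_2(x)}(\Omega)$, so Proposition \ref{Prop:2.2}$(i)$ again yields $|w|_{h_2(x)} = 1$. The whole argument then reduces to comparing the constant-to-variable power $A^{h_1(x)}$ with the constants $A^{h_1^{\pm}}$ and transferring the comparison through the norm.

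Next I would extract the exponents by a pointwise squeeze. In case $(i)$, $A\leq 1$, so $t\mapsto A^{t}$ is non-increasing and $A^{h_1^{+}}\leq A^{h_1(x)}\leq A^{h_1^{-}}$ a.e., whence $|u|^{h_1(x)}/A^{h_1^{-}} \leq w \leq |u|^{h_1(x)}/A^{h_1^{+}}$ pointwise. Applying the monotonicity and positive homogeneity of the norm to the equality $|w|_{h_2(x)}=1$ gives $A^{-h_1^{-}}B \leq 1 \leq A^{-h_1^{+}}B$, which rearranges exactly to $A^{h_1^{+}} \leq B \leq A^{h_1^{-}}$, i.e. claim $(i)$. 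Case $(ii)$, $A\geq 1$, is identical except that $t\mapsto A^{t}$ is now non-decreasing, so the two pointwise inequalities, and hence the final bounds, interchange the roles of $h_1^{+}$ and $h_1^{-}$, giving $A^{h_1^{-}} \leq B \leq A^{h_1^{+}}$. Part $(iii)$ is the degenerate case $h_1^{+}=h_1^{-}=h$ of either $(i)$ or $(ii)$, for which the upper and lower bounds collapse to the single equality $\bigl|\,|u|^{h}\,\bigr|_{h_2(x)} = |u|_{h h_2(x)}^{h}$.

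The one genuinely delicate point is the normalisation step, since it relies on the modular being \emph{exactly} $1$ at the norm; this is guaranteed by Proposition \ref{Prop:2.2}$(i)$ as long as $0<A<\infty$ and the product exponent $h_1(x)h_2(x)$ is finite a.e. I would therefore either restrict the computation to the locus where $h_1 h_2<\infty$, or observe that the hypothesis $h_1\in L^{\infty}(\Omega)$ keeps $A^{h_1(x)}$ uniformly comparable to the constants $A^{h_1^{\pm}}$, so that the pointwise squeeze and the resulting two-sided bounds persist. Everything else is a routine invocation of the stated lattice and homogeneity properties of the Luxemburg norm, so I expect no further obstruction.
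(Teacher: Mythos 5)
Your argument is correct: the identity $\int_\Omega\bigl(|u|^{h_1(x)}\bigr)^{h_2(x)}dx=\int_\Omega|u|^{h_1(x)h_2(x)}dx$, the normalisation via the unit-ball property of Proposition \ref{Prop:2.2}$(i)$, and the pointwise squeeze $A^{h_1^{\pm}}\lessgtr A^{h_1(x)}$ is exactly the standard proof of this proposition as given in the variable-exponent references the paper cites (the paper itself states it without proof). Your flagged caveat about the modular equalling $1$ at the norm is handled in those sources by the standing assumption that the exponents are essentially bounded, which is the setting in force here, so nothing further is needed.
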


The variable exponent Sobolev space $W^{1,h(x)}( \Omega)$ is defined by
\begin{equation*}
W^{1,h(x)}( \Omega) =\{u\in L^{p(x) }(\Omega) : |\nabla u| \in L^{h(x)}(\Omega)\},
\end{equation*}
with the norm
\begin{equation*}
\|u\|_{1,h(x)}:=|u|_{h(x)}+|\nabla u|_{h(x)},
\end{equation*}
for all $u\in W^{1,h(x)}(\Omega)$.\\
\begin{proposition}\label{Prop:2.4} If $1<h^{-}\leq h^{+}<\infty $, then the spaces
$L^{h(x)}(\Omega)$ and $W^{1,h(x)}(\Omega)$ are separable and reflexive Banach spaces.
\end{proposition}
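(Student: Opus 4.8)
The plan is to establish all four properties first for the Lebesgue space $L^{h(x)}(\Omega)$, and then to transfer them to $W^{1,h(x)}(\Omega)$ by realising the latter as a closed subspace of a finite product of copies of the former. \textbf{Completeness.} Given a Cauchy sequence $(u_n)$ in the Luxemburg norm, I would use Proposition \ref{Prop:2.2} to convert norm smallness into modular smallness, deduce that $(u_n)$ is Cauchy in measure, pass to a subsequence converging a.e. to some $u$, and then apply Fatou's lemma to $|u_n-u_m|^{h(x)}$ to conclude that $u\in L^{h(x)}(\Omega)$ and $|u_n-u|_{h(x)}\to 0$; uniqueness of the limit gives completeness. \textbf{Separability.} Since $h^+<\infty$, the modular $\rho$ obeys the growth bound $\rho(2u)\le 2^{h^+}\rho(u)$ (a $\Delta_2$-type condition), which makes simple functions norm-dense: one truncates $u$, approximates by simple functions converging a.e. with dominated modular, and invokes Proposition \ref{Prop:2.2}$(iii)$. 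A countable dense set is then obtained from simple functions with rational values supported on finite unions of sets from a fixed countable algebra generating the Borel sets of $\Omega$ (e.g. rational boxes).

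\textbf{Reflexivity} is the technical heart. I would prove that $L^{h(x)}(\Omega)$ is uniformly convex under the assumption $1<h^-\le h^+<\infty$ and then invoke the Milman--Pettis theorem. The uniform convexity rests on a variable-exponent Clarkson-type inequality: integrating the pointwise estimate with exponent $t=h(x)$ yields a modular bound of the form $\rho\!\left(\frac{u+v}{2}\right)\le \tfrac12\rho(u)+\tfrac12\rho(v)-c\,\rho\!\left(\frac{u-v}{2}\right)$, which together with the modular--norm comparison in Proposition \ref{Prop:2.2}$(ii)$ is converted into an $\varepsilon$--$\delta$ statement for the norm. This translation is exactly where the main difficulty lies: the exponent varies pointwise, so the branches $h(x)\ge 2$ and $1<h(x)<2$ must be treated by separate Clarkson inequalities and then recombined through the two regimes of Proposition \ref{Prop:2.2}$(ii)$. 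As an alternative route one may identify the dual $(L^{h(x)}(\Omega))^{*}\cong L^{h'(x)}(\Omega)$, where $\tfrac{1}{h(x)}+\tfrac{1}{h'(x)}=1$, by a Hölder/Riesz-type representation and verify that the canonical embedding into the bidual is onto; either argument closes the reflexivity step.

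\textbf{Transfer to the Sobolev space.} Finally, the linear map $J:W^{1,h(x)}(\Omega)\to (L^{h(x)}(\Omega))^{N+1}$ given by $Ju=(u,\partial_1 u,\dots,\partial_N u)$ is an isomorphism onto its image, and the defining condition of the Sobolev norm makes $J(W^{1,h(x)}(\Omega))$ a closed subspace of the product. Since finite products of separable reflexive Banach spaces are again separable and reflexive, and since closed subspaces inherit both properties, $W^{1,h(x)}(\Omega)$ is a separable reflexive Banach space, as claimed. The principal obstacle throughout is the uniform-convexity estimate in the reflexivity step; the completeness, separability, and transfer arguments are routine modular-space manipulations resting on Proposition \ref{Prop:2.2}.
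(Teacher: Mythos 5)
The paper does not actually prove this proposition: it is quoted as standard background, with the reader referred to \cite{cruz2013variable,diening2011lebesgue,edmunds2000sobolev,fan2001spaces,radulescu2015partial} for the proofs. Your outline is essentially the classical argument from those references (in particular Fan--Zhao), and its architecture is sound: completeness via modular Cauchy-ness, a.e.\ convergence and Fatou; separability from the $\Delta_2$-type bound $\rho(2u)\le 2^{h^+}\rho(u)$ guaranteed by $h^+<\infty$, which makes rational simple functions over a countable generating algebra dense; reflexivity from uniform convexity (or from the duality $(L^{h(x)})^*\cong L^{h'(x)}$ together with surjectivity of the canonical embedding, which you correctly note must be checked and does not follow from the two isomorphisms alone); and the transfer to $W^{1,h(x)}(\Omega)$ by realising it as a closed subspace of $(L^{h(x)}(\Omega))^{N+1}$. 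The one place where your sketch is looser than the actual proofs is the Clarkson step: the modular inequality $\rho\bigl(\frac{u+v}{2}\bigr)\le \frac12\rho(u)+\frac12\rho(v)-c\,\rho\bigl(\frac{u-v}{2}\bigr)$ does not hold in that form when $h(x)\in(1,2)$; the second Clarkson inequality in that regime involves the conjugate exponent and cannot simply be integrated pointwise and recombined with the $h(x)\ge2$ branch without additional work (this is why some references, e.g.\ Diening et al., instead prove uniform convexity of the modular directly or pass through the duality argument). Since you explicitly flag this as the main difficulty and supply the duality route as an alternative, I would count this as an honest sketch of a correct, standard proof rather than a gap --- but if you were to write it out in full, the $1<h(x)<2$ branch is where the real work lies, and you should expect to either follow the dual-space argument to the end or use a sharper pointwise convexity estimate valid uniformly for exponents in $[h^-,h^+]$.
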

The space $W_{0}^{1,h(x)}(\Omega)$ is defined as
$\overline{C_{0}^{\infty }(\Omega )}^{\|\cdot\|_{1,h(x)}}=W_{0}^{1,h(x)}(\Omega)$, and hence, it is the smallest closed set that contains $C_{0}^{\infty }(\Omega )$. Therefore, $W_{0}^{1,h(x)}(\Omega)$ is also a separable and reflexive Banach space due to the inclusion $W_{0}^{1,h(x)}(\Omega) \subset W^{1,h(x)}(\Omega)$. \\
Note that as  a consequence of Poincar\'{e} inequality, $\|u\|_{1,h(x)}$ and $|\nabla u|_{h(x)}$ are equivalent
norms on $W_{0}^{1,h(x)}(\Omega)$. Therefore, for any $u\in W_{0}^{1,h(x)}(\Omega)$ we can define an equivalent norm $\|u\|$ such that
\begin{equation*}
\|u\| :=|\nabla u|_{h(x)}.
\end{equation*}
\begin{proposition}\label{Prop:2.5} Let $m\in C(\overline{\Omega })$. If $1\leq m(x)<h^{\ast }(x)$ for all $x\in
\overline{\Omega}$, then the embeddings $W^{1,h(x)}(\Omega) \hookrightarrow L^{m(x)}(\Omega)$ and $W_0^{1,h(x)}(\Omega) \hookrightarrow L^{m(x)}(\Omega)$  are compact and continuous, where
$h^{\ast}(x) =\left\{\begin{array}{cc}
\frac{Nh(x) }{N-h(x)} & \text{if }h(x)<N, \\
+\infty & \text{if }h(x) \geq N.
\end{array}
\right. $
\end{proposition}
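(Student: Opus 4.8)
The plan is to deduce the statement from the classical constant-exponent Rellich--Kondrachov theorem by a localization-and-patching argument, exploiting that the \emph{strict} subcritical condition $m(x)<h^{*}(x)$ holds on the \emph{compact} set $\overline{\Omega}$. Two preliminary reductions: first, compactness of an embedding entails its continuity, so it suffices to prove the compactness assertion; second, since $W_{0}^{1,h(x)}(\Omega)\subset W^{1,h(x)}(\Omega)$, it is enough to treat $W^{1,h(x)}(\Omega)$. The guiding idea is that on a sufficiently small ball the variable exponents behave like constants, so the classical theory applies there, and then the finitely many local statements are glued together via the additivity of the modular. This route deliberately avoids first proving the critical Sobolev inequality $W^{1,h(x)}\hookrightarrow L^{h^{*}(x)}$, which would demand log-H\"older regularity of $h$.

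First I would localize. Fix $x_{0}\in\overline{\Omega}$. If $h(x_{0})\geq N$, then $h^{*}(x_{0})=+\infty$ and by continuity one may pick a ball $B=B(x_{0},r)$ on which $h$ remains large enough that the constant subcritical inequality below holds trivially. If $h(x_{0})<N$, then $m(x_{0})<h^{*}(x_{0})=Nh(x_{0})/(N-h(x_{0}))$, and since both $m$ and $x\mapsto Nh(x)/(N-h(x))$ are continuous at $x_{0}$, there is a ball $B=B(x_{0},r)$ so small that, writing $h_{B}^{-}:=\min_{\overline{B}\cap\overline{\Omega}}h$ and $m_{B}^{+}:=\max_{\overline{B}\cap\overline{\Omega}}m$, the \emph{constant} exponents satisfy the strict relation $m_{B}^{+}<(h_{B}^{-})^{*}$. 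By compactness of $\overline{\Omega}$, finitely many such balls $B_{1},\dots,B_{k}$ cover $\overline{\Omega}$.

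On each piece $B_{i}\cap\Omega$ I would chain three embeddings. Because $B_{i}\cap\Omega$ is bounded and $h(x)\geq h_{B_{i}}^{-}$ there, the inclusion relations for variable Lebesgue spaces on a set of finite measure (consequences of Propositions~\ref{Prop:2.2} and~\ref{Prop:2.2bb}) give the continuous embedding $W^{1,h(x)}(B_{i}\cap\Omega)\hookrightarrow W^{1,h_{B_{i}}^{-}}(B_{i}\cap\Omega)$ into the \emph{constant}-exponent Sobolev space. The classical Rellich--Kondrachov theorem then yields the \emph{compact} embedding $W^{1,h_{B_{i}}^{-}}(B_{i}\cap\Omega)\hookrightarrow\hookrightarrow L^{m_{B_{i}}^{+}}(B_{i}\cap\Omega)$, where the strictness $m_{B_{i}}^{+}<(h_{B_{i}}^{-})^{*}$ is exactly what guarantees compactness. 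Finally, since $m(x)\leq m_{B_{i}}^{+}$ on the bounded set $B_{i}\cap\Omega$, one further inclusion gives $L^{m_{B_{i}}^{+}}(B_{i}\cap\Omega)\hookrightarrow L^{m(x)}(B_{i}\cap\Omega)$. Composing, the embedding $W^{1,h(x)}(B_{i}\cap\Omega)\hookrightarrow\hookrightarrow L^{m(x)}(B_{i}\cap\Omega)$ is compact on each cover element.

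To globalize, take a bounded sequence $(u_{n})$ in $W^{1,h(x)}(\Omega)$. Restricting to $B_{1}\cap\Omega$, extract a subsequence converging in $L^{m(x)}(B_{1}\cap\Omega)$; repeating over $B_{2},\dots,B_{k}$ and passing to a diagonal subsequence produces a single subsequence, still denoted $(u_{n})$, converging in $L^{m(x)}(B_{i}\cap\Omega)$ for every $i$ to a limit that is consistent across overlaps and hence defines $u\in L^{m(x)}(\Omega)$. Writing $E_{1}=B_{1}\cap\Omega$ and $E_{i}=(B_{i}\cap\Omega)\setminus\bigcup_{j<i}B_{j}$, the sets $E_{1},\dots,E_{k}$ partition $\Omega$ and the modular is additive over them, so
\begin{equation*}
\rho(u_{n}-u)=\sum_{i=1}^{k}\int_{E_{i}}|u_{n}-u|^{m(x)}\,dx\longrightarrow 0,
\end{equation*}
each term vanishing because $E_{i}\subset B_{i}\cap\Omega$. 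Proposition~\ref{Prop:2.2}$(iii)$ then upgrades this to $|u_{n}-u|_{m(x)}\to 0$, establishing compactness. The delicate point is the uniformity in the localization: each radius must be chosen small enough that the oscillation of $h$ does not destroy the \emph{constant} strict inequality $m_{B}^{+}<(h_{B}^{-})^{*}$, and this is precisely where the strictness $m(x)<h^{*}(x)$ together with continuity on the compact $\overline{\Omega}$ enters; the case $h(x)\geq N$ must be separated out but is the easier one.
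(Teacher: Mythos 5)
The paper itself offers no proof of Proposition~\ref{Prop:2.5}: it is quoted as a known result, with the proof deferred to the cited references on variable exponent spaces. Your localization-and-patching argument is, in outline, precisely the classical proof found in those references (Kov\'{a}\v{c}ik--R\'{a}kosn\'{\i}k, Fan--Zhao): shrink the neighbourhoods until the frozen exponents satisfy the strict constant inequality $m_{B}^{+}<(h_{B}^{-})^{*}$, apply the constant-exponent theory locally, and glue via a diagonal subsequence together with the additivity of the modular and Proposition~\ref{Prop:2.2}$(iii)$. The two reductions, the elementary inclusions between variable and constant exponent Lebesgue spaces on sets of finite measure, and the final modular-to-norm upgrade are all correct.

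The one genuine gap is the middle link of your local chain. The Rellich--Kondrachov theorem for the full space $W^{1,p}(U)$ (as opposed to $W_{0}^{1,p}(U)$) requires $U$ to be sufficiently regular --- an extension domain, or one satisfying the cone or Lipschitz condition. You apply it to $U=B_{i}\cap\Omega$, but the intersection of a ball with a Lipschitz domain need not inherit any of these properties: where $\partial B_{i}$ meets $\partial\Omega$ tangentially (which cannot be ruled out when the local Lipschitz constant of $\partial\Omega$ is large), $B_{i}\cap\Omega$ can develop outward cusps at which no cone fits, and on such domains even the continuous embedding $W^{1,p}\hookrightarrow L^{q}$ for $q>p$, let alone the compact one, may fail. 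The standard repair is not to intersect with balls but to invoke the decomposition lemma used in the classical proof of Rellich--Kondrachov for cone-property domains (see Adams, \emph{Sobolev Spaces}): a bounded domain with the cone property is a finite union of subdomains, each again with the cone property and each of diameter smaller than any prescribed $\delta>0$. Choosing $\delta$ by uniform continuity of $h$ and $m$ and the strict inequality $m<h^{*}$ on the compact set $\overline{\Omega}$ places every subdomain in the regime where your three-step chain is valid, and the rest of your argument goes through unchanged. For the $W_{0}^{1,h(x)}$ embedding alone the issue evaporates, since one may extend by zero and work on the balls themselves; it is only the non-zero-trace case that needs the regular decomposition. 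A minor further point: when $h(x_{0})=N$ the inequality $m_{B}^{+}<(h_{B}^{-})^{*}$ is not ``trivial'' for a fixed ball, since $h_{B}^{-}$ may drop below $N$ and $(h_{B}^{-})^{*}$ is then finite; it does hold for small enough balls because $(h_{B}^{-})^{*}\to\infty$ as the radius shrinks, but that deserves a sentence rather than the word ``trivially''.
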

In the sequel, we introduce the multi-phase operator, the Musielak–Orlicz space, and the Musielak–Orlicz Sobolev space, respectively.\\
We make the following assumptions.
\begin{itemize}
\item[$(H_1)$] $p,q,r,s\in C_+(\overline{\Omega})$ with $p(x)<N$; $p(x)<q(x)<r(x)<s(x)<p^*(x)$; $s^+<p^*(x)$.
\item[$(H_2)$] $\mu_1(\cdot),\mu_2(\cdot)\in L^\infty(\Omega)$ such that $\mu_1(x)\geq 0$ and $\mu_2(x)\geq 0$ for all $x\in
\overline{\Omega }$.
\end{itemize}
Under the assumptions $(H_1)$ and $(H_2)$, we define the nonlinear function $\mathcal{T}:\Omega\times [0,\infty]\to [0,\infty]$, i.e. the \textit{multi-phase operator}, by
\[
\mathcal{T}(x,t)=t^{p(x)}+\mu_1(x)t^{q(x)}+\mu_2(x)t^{r(x)}\ \text{for all}\ (x,t)\in \Omega\times [0,\infty].
\]
Then the corresponding modular $\rho_\mathcal{T}(\cdot)$ is given by
\[
\displaystyle\rho_\mathcal{T}(u):=\int_\Omega\mathcal{T}(x,|u|)dx=
\int_\Omega\left(|u(x)|^{p(x)}+\mu_1(x)|u(x)|^{q(x)}+\mu_2(x)|u(x)|^{r(x)}\right)dx.
\]
The \textit{Musielak-Orlicz space} $L^{\mathcal{T}}(\Omega)$, is defined by
\[
L^{\mathcal{T}}(\Omega)=\left\{u:\Omega\to \mathbb{R}\,\, \text{measurable};\,\, \rho_{\mathcal{T}}(u)<+\infty\right\},
\]
endowed with the Luxemburg norm
\[
\|u\|_{\mathcal{T}}:=\inf\left\{\zeta>0: \rho_{\mathcal{T}}\left(\frac{u}{\zeta}\right)\leq 1\right\}.
\]
Analogous to Proposition \ref{Prop:2.2}, there are similar relationship between the modular $\rho_{\mathcal{T}}(\cdot)$ and the norm $\|\cdot\|_{\mathcal{T}}$, see \cite[Proposition 3.2]{dai2024regularity} for a detailed proof.

\begin{proposition}\label{Prop:2.2a}
Assume $(H_1)$ hold, and $u\in L^{\mathcal{H}}(\Omega)$. Then
\begin{itemize}
\item[$(i)$] If $u\neq 0$, then $\|u\|_{\mathcal{T}}=\zeta\Leftrightarrow \rho_{\mathcal{T}}(\frac{u}{\zeta})=1$,
\item[$(ii)$] $\|u\|_{\mathcal{T}}<1\ (\text{resp.}\ >1, =1)\Leftrightarrow \rho_{\mathcal{T}}(\frac{u}{\zeta})<1\ (\text{resp.}\ >1, =1)$,
\item[$(iii)$] If $\|u\|_{\mathcal{T}}<1\Rightarrow \|u\|_{\mathcal{T}}^{r^+}\leq \rho_{\mathcal{T}}(u)\leq \|u\|_{\mathcal{T}}^{p^-}$,
\item[$(iv)$]If $\|u\|_{\mathcal{T}}>1\Rightarrow \|u\|_{\mathcal{T}}^{p^-}\leq \rho_{\mathcal{T}}(u)\leq \|u\|_{\mathcal{T}}^{r^+}$,
\item[$(v)$] $\|u\|_{\mathcal{T}}\to 0\Leftrightarrow \rho_{\mathcal{T}}(u)\to 0$,
\item[$(vi)$]$\|u\|_{\mathcal{T}}\to +\infty\Leftrightarrow \rho_{\mathcal{T}}(u)\to +\infty$,
\item[$(vii)$] $\|u\|_{\mathcal{T}}\to 1\Leftrightarrow \rho_{\mathcal{T}}(u)\to 1$,
\item[$(viii)$] If $u_n\to u$ in $L^{\mathcal{T}}(\Omega)$, then $\rho_{\mathcal{T}}(u_n)\to\rho_{\mathcal{T}}(u)$.
\end{itemize}
\end{proposition}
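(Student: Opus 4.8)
The plan is to reproduce the classical modular--norm dictionary (the exact analogue of Proposition~\ref{Prop:2.2} for variable Lebesgue spaces), exploiting that $\mathcal{T}(x,\cdot)$ is continuous, strictly increasing, convex, vanishes at $0$, and blows up at $+\infty$, uniformly enough because all exponents lie in the compact range $[p^-,r^+]$ with $1<p^-$ and $r^+<\infty$.

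First I would fix $u\neq 0$ and study the scalar function $\varphi(\zeta):=\rho_{\mathcal{T}}(u/\zeta)$ for $\zeta>0$. Monotone and dominated convergence give that $\varphi$ is continuous and strictly decreasing, with $\varphi(\zeta)\to+\infty$ as $\zeta\to0^+$ and $\varphi(\zeta)\to0$ as $\zeta\to+\infty$. The intermediate value theorem then yields a unique $\zeta_0$ with $\varphi(\zeta_0)=1$, and by the definition of the Luxemburg norm this $\zeta_0$ equals $\|u\|_{\mathcal{T}}$; this proves $(i)$. Item $(ii)$ is then immediate from the strict monotonicity of $\varphi$ together with $\varphi(\|u\|_{\mathcal{T}})=1$.

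For $(iii)$ and $(iv)$ I would use the homogeneity-type bounds. Writing each term of $\mathcal{T}(x,\zeta t)$ as $\zeta^{e(x)}t^{e(x)}$ with exponent $e(x)\in\{p(x),q(x),r(x)\}\subset[p^-,r^+]$, for $0<\zeta<1$ one has $\zeta^{r^+}\mathcal{T}(x,t)\le \mathcal{T}(x,\zeta t)\le \zeta^{p^-}\mathcal{T}(x,t)$, with the inequalities reversed for $\zeta>1$. Taking $\zeta=\|u\|_{\mathcal{T}}$, $t=|u|/\zeta$ and using $\rho_{\mathcal{T}}(u/\zeta)=1$ from $(i)$, integration over $\Omega$ collapses these bounds to $\|u\|_{\mathcal{T}}^{r^+}\le \rho_{\mathcal{T}}(u)\le \|u\|_{\mathcal{T}}^{p^-}$ when $\|u\|_{\mathcal{T}}<1$, and to $\|u\|_{\mathcal{T}}^{p^-}\le \rho_{\mathcal{T}}(u)\le \|u\|_{\mathcal{T}}^{r^+}$ when $\|u\|_{\mathcal{T}}>1$. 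Items $(v)$, $(vi)$, $(vii)$ then follow by squeezing within these two chains and letting $\|u\|_{\mathcal{T}}\to0,\ +\infty,\ 1$ respectively, using $(ii)$ for the converse implications.

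The main obstacle is $(viii)$, the continuity of the modular along norm-convergent sequences, since it is not a pointwise statement about a single $u$. Here I would first convert norm convergence into modular convergence of the difference: $\|u_n-u\|_{\mathcal{T}}\to0$ gives $\rho_{\mathcal{T}}(u_n-u)\to0$ by $(v)$. The delicate point is upgrading this to $\rho_{\mathcal{T}}(u_n)\to\rho_{\mathcal{T}}(u)$, which fails for general Orlicz functions but holds here because $r^+<\infty$ forces a $\Delta_2$-type condition on $\mathcal{T}$. Concretely, I would use convexity to dominate $\mathcal{T}(x,|u_n|)$ by a combination of $\mathcal{T}(x,|u_n-u|)$ and $\mathcal{T}(x,|u|)$ (splitting $u_n=\tfrac12\big(2(u_n-u)\big)+\tfrac12(2u)$ and absorbing the factor $2^{r^+}$), pass to an a.e.\ convergent subsequence, and apply a Vitali/generalized dominated convergence argument whose equi-integrability is supplied by the $\Delta_2$ control. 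This is precisely the step carried out in \cite[Proposition 3.2]{dai2024regularity}, which I would invoke for the remaining technical details.
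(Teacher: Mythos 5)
Your argument is correct and is the standard modular--norm dictionary proof; note that the paper itself offers no proof of this proposition, deferring entirely to \cite[Proposition 3.2]{dai2024regularity}, which carries out exactly the steps you sketch (IVT on $\zeta\mapsto\rho_{\mathcal{T}}(u/\zeta)$ for $(i)$--$(ii)$, the exponent bounds $\zeta^{r^+}\mathcal{T}(x,t)\le\mathcal{T}(x,\zeta t)\le\zeta^{p^-}\mathcal{T}(x,t)$ for $(iii)$--$(vii)$, and convexity plus the $\Delta_2$ bound $\mathcal{T}(x,2t)\le 2^{r^+}\mathcal{T}(x,t)$ for $(viii)$). Your reading of $(ii)$ as a statement about $\rho_{\mathcal{T}}(u)$ rather than $\rho_{\mathcal{T}}(u/\zeta)$ is the correct interpretation of what is evidently a typo in the statement.
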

The \textit{Musielak-Orlicz Sobolev space} $W^{1,\mathcal{T}}(\Omega)$ is defined by
\[
W^{1,\mathcal{T}}(\Omega)=\left\{u\in L^{\mathcal{T}}(\Omega):
|\nabla u|\in L^{\mathcal{H}}(\Omega)\right\},
\]
and equipped with the norm
\[
\|u\|_{1,\mathcal{T}}:=\|\nabla u\|_{\mathcal{T}}+\|u\|_{\mathcal{T}},
\]
where $\|\nabla u\|_{\mathcal{T}}=\|\,|\nabla u|\,\|_{\mathcal{T}}$.\\
The space $W_0^{1,\mathcal{T}}(\Omega)$ is defined as $\overline{C_{0}^{\infty }(\Omega )}^{\|\cdot\|_{1,\mathcal{T}}}=W_0^{1,\mathcal{T}}(\Omega)$. Notice that $L^{\mathcal{T}}(\Omega), W^{1,\mathcal{T}}(\Omega)$ and $W_0^{1,\mathcal{T}}(\Omega)$ are uniformly convex and reflexive Banach spaces, and the following embeddings hold \cite[Propositions 3.1, 3.3]{dai2024regularity}.
\begin{proposition}\label{Prop:2.7a}
Let $(H_1)$ and $(H_2)$ be satisfied. Then the following embeddings hold:
\begin{itemize}
\item[$(i)$] $L^{\mathcal{T}}(\Omega)\hookrightarrow L^{h(\cdot)}(\Omega), W^{1,\mathcal{T}}(\Omega)\hookrightarrow W^{1,h(\cdot)}(\Omega)$, $W_0^{1,\mathcal{T}}(\Omega)\hookrightarrow W_0^{1,h(\cdot)}(\Omega)$ are continuous for all $h\in C(\overline{\Omega})$ with $1\leq h(x)\leq p(x)$ for all $x\in \overline{\Omega}$.
\item[$(ii)$] $W^{1,\mathcal{T}}(\Omega)\hookrightarrow L^{h(\cdot)}(\Omega)$ and $W_0^{1,\mathcal{T}}(\Omega)\hookrightarrow L^{h(\cdot)}(\Omega)$ are compact for all $h\in C(\overline{\Omega})$ with $1\leq h(x)< p^*(x)$ for all $x\in \overline{\Omega}$.
\end{itemize}
\end{proposition}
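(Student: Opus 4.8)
The plan is to reduce every one of these embeddings to its classical variable-exponent counterpart in the space $W_0^{1,p(\cdot)}(\Omega)$ by exploiting the pointwise domination $\mathcal{T}(x,t)\ge t^{p(x)}$, which holds because $\mu_1,\mu_2\ge 0$ under $(H_2)$, and then to invoke Proposition \ref{Prop:2.5}. Concretely, I would first prove the single foundational estimate
\[
|u|_{p(\cdot)}\le \|u\|_{\mathcal{T}}\qquad\text{for all }u\in L^{\mathcal{T}}(\Omega),
\]
together with its gradient analogue, after which all four embeddings follow by routine composition.

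To establish this estimate, take $u\neq 0$ and set $\zeta:=\|u\|_{\mathcal{T}}$. By Proposition \ref{Prop:2.2a}$(i)$ we have $\rho_{\mathcal{T}}(u/\zeta)=1$. Since $\mu_1,\mu_2\ge 0$, the definition of $\rho_{\mathcal{T}}$ gives
\[
\int_\Omega\Big|\frac{u}{\zeta}\Big|^{p(x)}\,dx\le \rho_{\mathcal{T}}\Big(\frac{u}{\zeta}\Big)=1,
\]
so by Proposition \ref{Prop:2.2}$(i)$ we obtain $|u/\zeta|_{p(\cdot)}\le 1$, that is, $|u|_{p(\cdot)}\le\zeta=\|u\|_{\mathcal{T}}$. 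This proves $L^{\mathcal{T}}(\Omega)\hookrightarrow L^{p(\cdot)}(\Omega)$ continuously. Applying the same argument to $|\nabla u|$ yields $|\nabla u|_{p(\cdot)}\le\|\nabla u\|_{\mathcal{T}}$, whence $\|u\|_{1,p(\cdot)}\le\|u\|_{1,\mathcal{T}}$ and therefore $W^{1,\mathcal{T}}(\Omega)\hookrightarrow W^{1,p(\cdot)}(\Omega)$; the $W_0$-version is immediate since $C_0^\infty(\Omega)$ is dense in both spaces.

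For part $(i)$ it then remains to chain $L^{p(\cdot)}(\Omega)\hookrightarrow L^{h(\cdot)}(\Omega)$ for every $h\in C(\overline{\Omega})$ with $1\le h(x)\le p(x)$, which is the standard continuous embedding of variable-exponent Lebesgue spaces on a bounded domain, following from the generalized H\"older inequality together with $|\Omega|<\infty$. For part $(ii)$ I would write the embedding as the composition
\[
W^{1,\mathcal{T}}(\Omega)\hookrightarrow W^{1,p(\cdot)}(\Omega)\hookrightarrow L^{h(\cdot)}(\Omega),
\]
where the first arrow is the continuous embedding just proved and the second is the compact embedding of Proposition \ref{Prop:2.5}, valid because $p(x)<N$ by $(H_1)$ and $1\le h(x)<p^{\ast}(x)$. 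Since the composition of a bounded operator with a compact one is compact, the claim follows, and identically for $W_0^{1,\mathcal{T}}(\Omega)$.

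The argument is almost entirely a reduction, so the only genuinely substantive step is the foundational modular-to-norm passage in the second paragraph. The point to watch is that a pointwise inequality between the two modulars does \emph{not} translate into a norm inequality by homogeneity, because the exponents are variable; what makes it work is the unit-ball characterization of the Luxemburg norm in Proposition \ref{Prop:2.2a}$(i)$, which converts the modular domination into the clean bound $|u|_{p(\cdot)}\le\|u\|_{\mathcal{T}}$ with no case distinction on whether the norm lies above or below $1$.
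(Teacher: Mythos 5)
Your proposal is correct. Note first that the paper does not actually prove this proposition: it is quoted from the literature (the text attributes it to Propositions 3.1 and 3.3 of the cited regularity paper on multi-phase operators), so there is no in-paper argument to compare against line by line. What you supply is the standard self-contained proof, and it is sound: the pointwise domination $\mathcal{T}(x,t)\ge t^{p(x)}$ (valid since $\mu_1,\mu_2\ge 0$ and $t\ge 0$) gives $\rho_{p(\cdot)}(u/\zeta)\le\rho_{\mathcal{T}}(u/\zeta)\le 1$ at $\zeta=\|u\|_{\mathcal{T}}$, hence $|u|_{p(\cdot)}\le\|u\|_{\mathcal{T}}$; the same for gradients yields $W^{1,\mathcal{T}}(\Omega)\hookrightarrow W^{1,p(\cdot)}(\Omega)$; part $(i)$ then follows from the bounded-domain embedding $L^{p(\cdot)}(\Omega)\hookrightarrow L^{h(\cdot)}(\Omega)$ for $h\le p$, and part $(ii)$ from composing with the compact embedding of Proposition \ref{Prop:2.5}, compactness being preserved under composition with a bounded map. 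Your closing remark correctly identifies the one genuinely delicate point, namely that variable exponents destroy homogeneity so the modular inequality must be converted via the unit-ball characterization of the Luxemburg norm rather than by scaling. One small refinement: you do not even need the equality $\rho_{\mathcal{T}}(u/\zeta)=1$ from Proposition \ref{Prop:2.2a}$(i)$ (which presupposes the modular attains the value $1$ at the norm); the infimum definition of $\|u\|_{\mathcal{T}}$ gives $\rho_{\mathcal{T}}(u/(\zeta+\varepsilon))\le 1$ for every $\varepsilon>0$, and letting $\varepsilon\to 0$ after passing to the $p(\cdot)$-modular yields the same conclusion with fewer hypotheses. The only other item worth a sentence in a final write-up is the $W_0$ case, where your density argument is correct but should state explicitly that a $\|\cdot\|_{1,\mathcal{T}}$-Cauchy sequence in $C_0^\infty(\Omega)$ is $\|\cdot\|_{1,p(\cdot)}$-Cauchy by the norm inequality, so the identity map sends $W_0^{1,\mathcal{T}}(\Omega)$ into $W_0^{1,p(\cdot)}(\Omega)$.
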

As a conclusion of Proposition \ref{Prop:2.7a}:\\
We have the continuous embedding $W_0^{1,\mathcal{T}}(\Omega)\hookrightarrow L^{h(\cdot)}(\Omega)$, and there is a constant $c_{\mathcal{T}}$ such that
\[
\|u\|_{h(\cdot)}\leq c_{\mathcal{T}}\|u\|_{1,\mathcal{T},0}.
\]
As well, $W_0^{1,\mathcal{H}}(\Omega)$ is compactly embedded in $L^{\mathcal{H}}(\Omega)$.\\
Thus,
$W_0^{1,\mathcal{T}}(\Omega)$ can be equipped with the equivalent norm
\[
\|u\|_{1,\mathcal{T},0}:=\|\nabla u\|_{\mathcal{T}}.
\]
\begin{proposition}\label{Prop:2.7}
For the convex functional
$$
\varrho_{\mathcal{T}}(u):=\int_\Omega\left(\frac{|\nabla u|^{p(x)}}{p(x)}+\mu_1(x)\frac{|\nabla u|^{q(x)}}{q(x)}+\mu_2(x)\frac{|\nabla u|^{r(x)}}{r(x)}\right)dx,
$$
we have $\varrho_{\mathcal{T}} \in C^{1}(W_0^{1,\mathcal{T}}(\Omega),\mathbb{R})$ with the derivative
$$
\langle\varrho^{\prime}_{\mathcal{T}}(u),\varphi\rangle=\int_{\Omega}(|\nabla u|^{p(x)-2}\nabla u+\mu_1(x)|\nabla u|^{q(x)-2}\nabla u+\mu_2(x)|\nabla u|^{r(x)-2}\nabla u)\cdot\nabla \varphi dx,
$$
for all  $u, \varphi \in W_0^{1,\mathcal{T}}(\Omega)$, where $\langle \cdot, \cdot\rangle$ is the dual pairing between $W_0^{1,\mathcal{T}}(\Omega)$ and its dual $W_0^{1,\mathcal{T}}(\Omega)^{*}$ \cite{dai2024regularity}.
\end{proposition}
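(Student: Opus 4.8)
The plan is to prove the statement in three stages---finiteness and convexity, computation of the G\^ateaux differential, and continuity of the resulting derivative---where the last stage is what actually delivers the $C^{1}$ conclusion. It is convenient to write $\varrho_{\mathcal{T}}=\varrho_p+\varrho_q+\varrho_r$, where
\[
\varrho_t(u):=\int_\Omega \frac{a(x)}{t(x)}|\nabla u|^{t(x)}\,dx,\qquad (a,t)\in\{(1,p),(\mu_1,q),(\mu_2,r)\},
\]
since all three summands are handled by the same argument and $\langle\varrho'_{\mathcal{T}}(u),\varphi\rangle$ is the sum of the corresponding contributions. Finiteness is immediate: for $u\in W_0^{1,\mathcal{T}}(\Omega)$ one has $0\le\varrho_t(u)\le\frac{1}{t^-}\rho_{\mathcal{T}}(|\nabla u|)<\infty$ because $|\nabla u|\in L^{\mathcal{T}}(\Omega)$, and convexity follows from the pointwise convexity of $\xi\mapsto a(x)|\xi|^{t(x)}/t(x)$ on $\mathbb{R}^N$.

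For the G\^ateaux derivative I would fix $u,\varphi\in W_0^{1,\mathcal{T}}(\Omega)$ and differentiate $\tau\mapsto a(x)|\nabla u+\tau\nabla\varphi|^{t(x)}/t(x)$, whose pointwise $\tau$-derivative is $a(x)|\nabla u+\tau\nabla\varphi|^{t(x)-2}(\nabla u+\tau\nabla\varphi)\cdot\nabla\varphi$ and equals the claimed integrand at $\tau=0$. To move the limit of the difference quotients under the integral, the mean value theorem bounds the quotient for $|\tau|\le1$ by $\|a\|_\infty(|\nabla u|+|\nabla\varphi|)^{t(x)-1}|\nabla\varphi|$, and a pointwise Young inequality gives
\[
a(x)(|\nabla u|+|\nabla\varphi|)^{t(x)-1}|\nabla\varphi|\le \frac{a(x)}{t'(x)}(|\nabla u|+|\nabla\varphi|)^{t(x)}+\frac{a(x)}{t(x)}|\nabla\varphi|^{t(x)}.
\]
Both terms on the right are integrable because $|\nabla u|+|\nabla\varphi|\in L^{\mathcal{T}}(\Omega)$ has finite modular (using $(a+b)^{t}\le 2^{r^+-1}(a^{t}+b^{t})$), so the dominating function lies in $L^1(\Omega)$ and dominated convergence yields the stated formula. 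This domination step is precisely where the weights $\mu_1,\mu_2$ must be kept with their own exponents rather than absorbed into a single $L^{p(\cdot)}$ estimate, since $q,r>p$ and $L^{\mathcal{T}}(\Omega)$ need not embed into $L^{q(\cdot)}(\Omega)$ or $L^{r(\cdot)}(\Omega)$; the Young splitting above circumvents this by never leaving the modular. Boundedness of $\varphi\mapsto\langle\varrho'_{\mathcal{T}}(u),\varphi\rangle$ then follows from the H\"older inequality in the Musielak--Orlicz space, after verifying that the field $V(u):=|\nabla u|^{p(x)-2}\nabla u+\mu_1|\nabla u|^{q(x)-2}\nabla u+\mu_2|\nabla u|^{r(x)-2}\nabla u$ has finite complementary modular, so that $\varrho'_{\mathcal{T}}(u)\in W_0^{1,\mathcal{T}}(\Omega)^{*}$.

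The decisive and hardest step is the continuity of $u\mapsto\varrho'_{\mathcal{T}}(u)$ from $W_0^{1,\mathcal{T}}(\Omega)$ into its dual, which upgrades G\^ateaux differentiability to class $C^{1}$. Given $u_n\to u$ in $W_0^{1,\mathcal{T}}(\Omega)$, I would argue along an arbitrary subsequence: after a further extraction $\nabla u_n\to\nabla u$ a.e., and by Proposition~\ref{Prop:2.2a}$(viii)$ the modulars converge, $\rho_{\mathcal{T}}(|\nabla u_n|)\to\rho_{\mathcal{T}}(|\nabla u|)$. Continuity of the maps $\xi\mapsto|\xi|^{t(x)-2}\xi$ gives $V(u_n)\to V(u)$ a.e., and an equi-integrability (Vitali) argument driven by the modular convergence shows $V(u_n)\to V(u)$ in the dual Musielak--Orlicz space. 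Testing $\langle\varrho'_{\mathcal{T}}(u_n)-\varrho'_{\mathcal{T}}(u),\varphi\rangle$ against all $\varphi$ with $\|\varphi\|_{1,\mathcal{T},0}\le1$ and using H\"older converts this into operator-norm convergence; a standard subsequence-of-subsequence argument then yields convergence of the full sequence. The obstacle throughout is that $\mu_1,\mu_2$ are merely $L^\infty$ and the exponents vary, so no single fixed-exponent duality applies and every estimate must be routed through the complementary modular of $\mathcal{T}$ rather than through a classical conjugate Lebesgue exponent.
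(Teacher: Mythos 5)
Your proposal is essentially correct, but note that the paper itself does not prove this proposition at all: it is imported verbatim from the cited reference \cite{dai2024regularity}, so there is no in-paper argument to compare against. What you have written is the standard direct proof (finiteness via the modular, G\^ateaux differentiability by dominated convergence with a Young-inequality majorant, then continuity of $u\mapsto\varrho'_{\mathcal{T}}(u)$ via a.e.\ convergence of gradients plus modular convergence and a Vitali/uniform-integrability argument routed through the complementary modular), and it is the same architecture one finds in the cited source and in the classical $p(x)$-Laplacian literature. Your key structural observation is sound and worth keeping: because $L^{\mathcal{T}}(\Omega)$ need not embed into $L^{q(\cdot)}(\Omega)$ or $L^{r(\cdot)}(\Omega)$, the weights $\mu_1,\mu_2$ must travel with their own exponents inside the modular, which is exactly why your displayed Young inequality correctly retains $a(x)$ rather than the $\|a\|_\infty$ you mention in the preceding sentence (that earlier $\|a\|_\infty$ bound would \emph{not} suffice for the $q$- and $r$-terms, so treat it as a slip and keep the weighted version). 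The only places where you would need to add detail in a full write-up are: (i) justifying that norm convergence in $W_0^{1,\mathcal{T}}(\Omega)$ yields a.e.\ convergent subsequences of gradients (via convergence in measure from the vanishing modular), and (ii) the pointwise inequality $\mathcal{T}^*(x,|V(u)|)\leq C\,\mathcal{T}(x,|\nabla u|)$ that makes the Vitali step and the H\"older duality estimate go through; both are routine for these power-type Musielak--Orlicz functions.
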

\begin{remark}\label{Rem:3.4ab}
Notice that by Propositions \ref{Prop:2.2a} and the equivalency of the norms $\|u\|_{1,\mathcal{T},0}$ and $\|\nabla u\|_{\mathcal{T}}$, we have the relations:
\begin{equation}\label{e2.de}
\frac{1}{r^+}\|u\|^{p^-}_{1,\mathcal{T},0}\leq \frac{1}{r^+}\rho_{\mathcal{T}}(\nabla u)\leq \varrho_{\mathcal{T}}(u) \leq \frac{1}{p^-}\rho_{\mathcal{T}}(\nabla u)\leq \frac{1}{p^-}\|u\|^{r^+}_{1,\mathcal{T},0} \text{ if } \|u\|_{1,\mathcal{T},0}>1,
\end{equation}
\begin{equation}\label{e2.df}
\frac{1}{r^+}\|u\|^{r^+}_{1,\mathcal{T},0}\leq \frac{1}{r^+}\rho_{\mathcal{T}}(\nabla u)\leq \varrho_{\mathcal{T}}(u) \leq \frac{1}{p^-}\rho_{\mathcal{T}}(\nabla u)\leq \frac{1}{p^-}\|u\|^{p^-}_{1,\mathcal{T},0} \text{ if } \|u\|_{1,\mathcal{T},0}\leq 1.
\end{equation}
\end{remark}

Throughout the paper, we assume the following.
\begin{itemize}
\item[$(A_{\beta})$] $\beta:\overline{\Omega}\to (0,1)$ is a continuous function such that $0<\beta^-\leq \beta(x)\leq \beta^+<1$, that is
\begin{equation*}
0<1-\beta^+\leq 1- \beta(x)\leq 1-\beta^-<1.
\end{equation*}
\item[$(A_{\alpha})$] $\alpha \in C_+(\overline{\Omega })$ fulfilling
\begin{equation*}
1\leq \alpha_0(x)(1-\beta^+)\leq\alpha_0(x)(1-\beta(x))\leq \alpha_0(x)(1-\beta^-)<p^*(x),\,\, \forall x \in \overline{\Omega},
\end{equation*}
where $\alpha_0^-\leq \alpha_0(x)\leq \alpha_0^+$ such that $\alpha_0(x)=\frac{\alpha(x)}{\alpha(x)-1}$.
\item[$(A_{\gamma})$] $\gamma \in C_+(\overline{\Omega })$ satisfying
\begin{equation*}
1\leq s(x)\gamma_0(x)<p^*(x),\,\, \forall x \in \overline{\Omega},
\end{equation*}
where $\gamma_0^-\leq \gamma_0(x)\leq \gamma_0^+$ such that $\gamma_0(x)=\frac{\gamma(x)}{\gamma(x)-1}$.
\end{itemize}
In the sequel, we obtain some auxiliary results which are necessary to obtain the main results.\\
The symbol $c$ without index stands for a generic positive constant which varies from line to line.\\
\begin{lemma}\label{Lem:3.3a}
Let $m_2 \in L^{\alpha(x)}(\Omega)$. Then the embedding $W_0^{1,\mathcal{T}}(\Omega)\hookrightarrow L_{m_2(x)}^{1-\beta(x)}(\Omega)$ is compact, and the inequality
\begin{equation}\label{e3.12a}
\int_{\Omega}m_2(x)|u|^{1-\beta(x)}dx\leq c(\|u\|^{1-\beta^-}_{1,\mathcal{T},0}+\|u\|^{1-\beta^+}_{1,\mathcal{T},0})\leq c\delta_1^{1-\beta^*}
\end{equation}
holds, where
\begin{equation*}
\begin{aligned}
\delta_1^{1-\beta^*}:=
\left\{ \begin{array}{ll}
\|u\|^{1-\beta^+}_{1,\mathcal{T},0}, & \|u\|_{1,\mathcal{T},0} <1 , \\
\|u\|^{1-\beta^-}_{1,\mathcal{T},0}, & \|u\|_{1,\mathcal{T},0} \geq 1.
\end{array}\right.
\end{aligned}
\end{equation*}
\end{lemma}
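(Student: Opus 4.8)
The plan is to reduce everything to the compact embedding $W_0^{1,\mathcal{T}}(\Omega)\hookrightarrow L^{(1-\beta(\cdot))\alpha_0(\cdot)}(\Omega)$ supplied by Proposition~\ref{Prop:2.7a}(ii), using H\"older's inequality to absorb the weight $m_2$ and Proposition~\ref{Prop:2.2bb} to handle the variable power $|u|^{1-\beta(x)}$. First I would apply the H\"older inequality for variable exponents to the conjugate pair $\alpha(x)$, $\alpha_0(x)=\frac{\alpha(x)}{\alpha(x)-1}$, writing
\[
\int_\Omega m_2(x)|u|^{1-\beta(x)}\,dx \le c\,|m_2|_{\alpha(x)}\,\big||u|^{1-\beta(x)}\big|_{\alpha_0(x)}.
\]
Since $m_2\in L^{\alpha(x)}(\Omega)$ by hypothesis, the factor $|m_2|_{\alpha(x)}$ is a finite constant that can be absorbed into $c$.

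Next I would estimate $\big||u|^{1-\beta(x)}\big|_{\alpha_0(x)}$ by invoking Proposition~\ref{Prop:2.2bb} with $h_1(x)=1-\beta(x)$ and $h_2(x)=\alpha_0(x)$, so that $h_1^-=1-\beta^+$ and $h_1^+=1-\beta^-$. The product exponent is $(1-\beta(x))\alpha_0(x)$, and assumption $(A_\alpha)$ guarantees precisely that $1\le (1-\beta(x))\alpha_0(x)<p^*(x)$, which is exactly the admissibility condition for the proposition and, simultaneously, the condition under which Proposition~\ref{Prop:2.7a}(ii) yields a compact embedding into $L^{(1-\beta(\cdot))\alpha_0(\cdot)}(\Omega)$. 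Splitting into the two cases $|u|_{(1-\beta)\alpha_0}\le 1$ and $|u|_{(1-\beta)\alpha_0}>1$ of Proposition~\ref{Prop:2.2bb}, and bounding each case by the sum of the two powers, gives
\[
\big||u|^{1-\beta(x)}\big|_{\alpha_0(x)}\le |u|^{1-\beta^-}_{(1-\beta)\alpha_0}+|u|^{1-\beta^+}_{(1-\beta)\alpha_0}.
\]
Inserting the continuous embedding estimate $|u|_{(1-\beta)\alpha_0}\le c\|u\|_{1,\mathcal{T},0}$ then produces the first inequality in (\ref{e3.12a}). The final bound by $\delta_1^{1-\beta^*}$ is the elementary observation that for $\|u\|_{1,\mathcal{T},0}<1$ the term with the smaller exponent $1-\beta^+$ dominates, whereas for $\|u\|_{1,\mathcal{T},0}\ge 1$ the term with the larger exponent $1-\beta^-$ dominates, so the sum is controlled by $2\delta_1^{1-\beta^*}$.

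For the compactness assertion I would take a weakly convergent sequence $u_n\rightharpoonup u$ in $W_0^{1,\mathcal{T}}(\Omega)$ and apply Proposition~\ref{Prop:2.7a}(ii) to get strong convergence $u_n\to u$ in $L^{(1-\beta(\cdot))\alpha_0(\cdot)}(\Omega)$. Re-running the H\"older/Proposition~\ref{Prop:2.2bb} estimate above with $u_n-u$ in place of $u$ yields
\[
\int_\Omega m_2(x)|u_n-u|^{1-\beta(x)}\,dx\le c\big(|u_n-u|^{1-\beta^-}_{(1-\beta)\alpha_0}+|u_n-u|^{1-\beta^+}_{(1-\beta)\alpha_0}\big),
\]
and since $1-\beta^\pm>0$, both terms on the right tend to $0$ as $|u_n-u|_{(1-\beta)\alpha_0}\to 0$, which establishes the compactness of $W_0^{1,\mathcal{T}}(\Omega)\hookrightarrow L_{m_2(x)}^{1-\beta(x)}(\Omega)$.

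The routine parts are the H\"older splitting and the case analysis; the one place that genuinely requires care—and the reason assumption $(A_\alpha)$ is stated the way it is—is verifying the chain $1\le(1-\beta(x))\alpha_0(x)<p^*(x)$ uniformly in $x$. This single condition does double duty: it renders the product exponent admissible for Proposition~\ref{Prop:2.2bb} (so that the sub-unit power $|u|^{1-\beta(x)}$ lands in a genuine variable Lebesgue space), and it keeps the exponent strictly below the critical Sobolev exponent $p^*(x)$ so that the embedding of Proposition~\ref{Prop:2.7a}(ii) is compact rather than merely continuous. I would therefore check this inequality first, since the rest of the argument hinges on it.
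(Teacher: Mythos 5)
Your proposal is correct and follows essentially the same route as the paper's proof: H\"older's inequality with the conjugate pair $\alpha(x),\alpha_0(x)$ to absorb $m_2$, Proposition~\ref{Prop:2.2bb} together with the embedding $W_0^{1,\mathcal{T}}(\Omega)\hookrightarrow L^{\alpha_0(\cdot)(1-\beta(\cdot))}(\Omega)$ to produce the two-power bound, and the compactness of that embedding (applied to a weakly convergent sequence) to conclude compactness of $W_0^{1,\mathcal{T}}(\Omega)\hookrightarrow L_{m_2(x)}^{1-\beta(x)}(\Omega)$. The only cosmetic difference is that you run the estimate on $u_n-u$ while the paper normalizes to $u_n\rightharpoonup 0$; the substance is identical.
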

\begin{proof} We refer the reader to the papers \cite{fan2005solutions,mashiyev2010nehari}
for the analogous results obtained in the space $W_{0}^{1,p(x)}(\Omega)$.\\
Step I: \\
Let's define $l(x):=\alpha_0(x)(1-\beta(x))$. Then $W_0^{1,\mathcal{T}}(\Omega)\hookrightarrow L^{l(x)}(\Omega)$. Thus, for $u \in W_0^{1,\mathcal{T}}(\Omega)$, $\int_{\Omega}||u|^{1-\beta(x)}|^{\alpha_0(x)}dx<\infty$, that is $ |u|^{1-\beta(x)} \in L^{\alpha_0(x)}(\Omega)$.  Using H\"{o}lder inequality, we obtain
\begin{equation}\label{e3.12abb}
\int_{\Omega}m_2(x)|u_n|^{1-\beta(x)}dx\leq c|m_2|_{\alpha(x)}||u_n|^{1-\beta(x)}|_{\alpha_0(x)}<\infty,
\end{equation}
which means $W_0^{1,\mathcal{T}}(\Omega)\subset L_{m_2(x)}^{1-\beta(x)}(\Omega)$.\\
Step II:\\
Let $(u_{n}) \subset W_0^{1,\mathcal{T}}(\Omega)$ such that $u_n \rightharpoonup 0$ (weakly) in $W_0^{1,\mathcal{T}}(\Omega)$. Then $u_n \to 0$ (strongly) in $L^{l(x)}(\Omega)$. Using H\"{o}lder inequality, we obtain
\begin{equation}\label{e3.12ac}
\int_{\Omega}m_2(x)|u_n|^{1-\beta(x)}dx\leq c|m_2|_{\alpha(x)}||u_n|^{1-\beta(x)}|_{\alpha_0(x)} \to 0,
\end{equation}
that is $|u_n|_{(m_2(x),1-\beta(x))} \to 0$. Thus, $W_0^{1,\mathcal{T}}(\Omega)\hookrightarrow L_{m_2(x)}^{1-\beta(x)}(\Omega)$ is compact.\\
Step III: \\
From $(A_{\beta})$, we can write $\int_{\Omega}m_2(x)|u|^{1-\beta(x)}dx\leq \int_{\Omega}m_2(x)\left(|u|^{1-\beta^+}+|u|^{1-\beta^-}\right)dx$. Then using (\ref{e3.12ac}) and H\"{o}lder inequality gives
\begin{equation}\label{e3.12ad}
\int_{\Omega}m_2(x)|u_n|^{1-\beta(x)}dx\leq c|m_2|_{\alpha(x)}||u|^{1-\beta(x)}|_{\alpha_0(x)}\leq c |m_2|_{\alpha(x)}|u|^{1-\beta^-}_{\alpha_0(x)(1-\beta^-)}\leq c \|u\|^{1-\beta^-}_{1,\mathcal{T},0}.
\end{equation}
With the same logic, it reads
\begin{equation}\label{e3.12ae}
\int_{\Omega}m_2(x)|u_n|^{1-\beta(x)}dx\leq c|m_2|_{\alpha(x)}||u|^{1-\beta(x)}|_{\alpha_0(x)}\leq c |m_2|_{\alpha(x)}|u|^{1-\beta^+}_{\alpha_0(x)(1-\beta^+)}\leq c \|u\|^{1-\beta^+}_{1,\mathcal{T},0}.
\end{equation}
Thus, by (\ref{e3.12ad}) and (\ref{e3.12ae}), (\ref{e3.12ac}) follows.
\end{proof}
\begin{lemma}\label{Lem:3.3b}
Let $m_1 \in L^{\gamma(x)}(\Omega)$. Then the embedding $W_0^{1,\mathcal{T}}(\Omega)\hookrightarrow L_{m_1(x)}^{s(x)}(\Omega)$ is compact, and the inequality
\begin{equation}\label{e3.12a}
\int_{\Omega}m_1(x)|u|^{s(x)}dx\leq c(\|u\|^{s^-}_{1,\mathcal{T},0}+\|u\|^{s^+}_{1,\mathcal{T},0})\leq c\delta_2^{s^*}
\end{equation}
holds, where
\begin{equation*}
\begin{aligned}
\delta_2^{s^*}:=
\left\{ \begin{array}{ll}
\|u\|^{s^-}_{1,\mathcal{T},0}, & \|u\|_{1,\mathcal{T},0} <1 , \\
\|u\|^{s^+}_{1,\mathcal{T},0}, & \|u\|_{1,\mathcal{T},0} \geq 1.
\end{array}\right.
\end{aligned}
\end{equation*}
\end{lemma}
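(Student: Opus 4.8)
The plan is to mirror the three-step argument used for Lemma~\ref{Lem:3.3a}, with the roles of $1-\beta(x)$, $\alpha$, $\alpha_0$ and hypothesis $(A_\beta)$ now played by $s(x)$, $\gamma$, $\gamma_0$ and hypothesis $(A_\gamma)$. First I would set $l(x):=s(x)\gamma_0(x)$, where $\gamma_0(x)=\frac{\gamma(x)}{\gamma(x)-1}$ is the conjugate exponent of $\gamma(x)$. By $(A_\gamma)$ we have $1\leq s(x)\gamma_0(x)<p^*(x)$ for all $x\in\overline\Omega$, so Proposition~\ref{Prop:2.7a}$(ii)$ yields the compact embedding $W_0^{1,\mathcal{T}}(\Omega)\hookrightarrow L^{l(x)}(\Omega)$. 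In particular, for $u\in W_0^{1,\mathcal{T}}(\Omega)$ we get $\int_\Omega\big||u|^{s(x)}\big|^{\gamma_0(x)}dx<\infty$, i.e. $|u|^{s(x)}\in L^{\gamma_0(x)}(\Omega)$. Since $m_1\in L^{\gamma(x)}(\Omega)$ and $\gamma,\gamma_0$ are conjugate, Hölder's inequality gives
\begin{equation*}
\int_\Omega m_1(x)|u|^{s(x)}dx\leq c\,|m_1|_{\gamma(x)}\,\big||u|^{s(x)}\big|_{\gamma_0(x)}<\infty,
\end{equation*}
which shows $W_0^{1,\mathcal{T}}(\Omega)\subset L_{m_1(x)}^{s(x)}(\Omega)$.

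For compactness, I would take $(u_n)\subset W_0^{1,\mathcal{T}}(\Omega)$ with $u_n\rightharpoonup 0$ in $W_0^{1,\mathcal{T}}(\Omega)$. The compactness of the embedding into $L^{l(x)}(\Omega)$ forces $u_n\to 0$ strongly there, hence $\big||u_n|^{s(x)}\big|_{\gamma_0(x)}\to 0$ by Proposition~\ref{Prop:2.2bb}, and the Hölder bound above gives $\int_\Omega m_1(x)|u_n|^{s(x)}dx\to 0$; that is, $|u_n|_{(m_1(x),s(x))}\to 0$, establishing the compact embedding $W_0^{1,\mathcal{T}}(\Omega)\hookrightarrow L_{m_1(x)}^{s(x)}(\Omega)$.

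For the explicit inequality, I would use the elementary pointwise bound $|u|^{s(x)}\leq |u|^{s^-}+|u|^{s^+}$ (valid since one of the two powers dominates according to whether $|u|\lessgtr 1$), split the integral accordingly, and apply Hölder together with Proposition~\ref{Prop:2.2bb}$(iii)$ with the \emph{constant} exponents $s^-$ and $s^+$ to convert $\big||u|^{s^\pm}\big|_{\gamma_0(x)}$ into $|u|^{s^\pm}_{s^\pm\gamma_0(x)}$. The continuous embedding $W_0^{1,\mathcal{T}}(\Omega)\hookrightarrow L^{s^\pm\gamma_0(x)}(\Omega)$ then turns these Lebesgue norms into powers of $\|u\|_{1,\mathcal{T},0}$, producing
\begin{equation*}
\int_\Omega m_1(x)|u|^{s(x)}dx\leq c\big(\|u\|^{s^-}_{1,\mathcal{T},0}+\|u\|^{s^+}_{1,\mathcal{T},0}\big),
\end{equation*}
and the final estimate by $c\,\delta_2^{s^*}$ follows by retaining only the dominant power according to whether $\|u\|_{1,\mathcal{T},0}\lessgtr 1$.

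I do not anticipate a genuine obstacle, since the argument is a direct transcription of Lemma~\ref{Lem:3.3a}; the only point requiring care is confirming that $s^\pm\gamma_0(x)$ remains below the critical exponent $p^*(x)$ so that Proposition~\ref{Prop:2.7a} applies. This is guaranteed by $(A_\gamma)$ for the variable product $s(x)\gamma_0(x)$, and a short monotonicity check extends it to the constant substitutes $s^-$ and $s^+$; alternatively, one works directly with the variable exponent $s(x)$ through Proposition~\ref{Prop:2.2bb}$(i)$--$(ii)$, avoiding the split altogether.
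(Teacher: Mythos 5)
Your proposal is correct and is exactly what the paper intends: the published proof of Lemma~\ref{Lem:3.3b} consists of the single remark that one repeats the argument of Lemma~\ref{Lem:3.3a}, and your three-step transcription (with $s(x)$, $\gamma$, $\gamma_0$, $(A_\gamma)$ replacing $1-\beta(x)$, $\alpha$, $\alpha_0$, $(A_\beta)$) is precisely that argument, including the needed check that $s^{\pm}\gamma_0(x)$ stays subcritical.
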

\begin{proof}
If one carries out a similar analysis as with the Lemma \ref{Lem:3.3a}, the proof follows.
\end{proof}

\begin{lemma}\label{Lem:3.3c}[Hardy-type Inequality] Assume that $p,q,r$ satisfies $(H_1)$.\\
$(i)$ There exists a positive constant $C_{N}(p,r)$ such that for all $u \in W_0^{1,\mathcal{T}}(\Omega)$ the inequality
\begin{align}\label{e3.11mm}
\int_{\Omega}\left(\frac{|u|^{p(x)}}{p(x)|x|^{p(x)}}+\mu_1(x)\frac{|u|^{q(x)}}{q(x)|x|^{q(x)}}+\mu_2(x)\frac{|u|^{r(x)}}{r(x)|x|^{r(x)}}\right)dx \leq C_{N}(p,r)\|u\|_{1,\mathcal{T},0}^{\varphi_0},
\end{align}
holds, where $C_{N}(p,r):=(p^-)^{-1}\hat{c}_M(1+\|\mu_1\|_{\infty}+\|\mu_2\|_{\infty})\max\{C_{N}(r^+),C_{N}(p^-)\}$ with $C_{N}(r^+):=\left(\frac{r^+}{(N-2)(r^+-1)}\right)^{r^+}$, $C_{N}(p^-):=\left(\frac{p^-}{(N-2)(p^--1)}\right)^{p^-}$; $\hat{c}_M:=\max\{\hat{c}_1,\hat{c}_2\}$ with $\hat{c}_1>0,\hat{c}_2>0$ are the embedding constants not depending on $u$; and $\varphi_0=p^-$ if $\|u\|_{1,\mathcal{T},0}<1$, $\varphi_0=r^{+}$ if $\|u\|_{1,\mathcal{T},0}\geq1$.\\
$(ii)$ There exists $0<x_{*} \in \mathbb{R}$ such that the inequality
\begin{equation}\label{e3.11mn}
\int_{\Omega}\left(\frac{|u|^{p(x)}}{|x|^{p(x)}}+\mu_1(x)\frac{|u|^{q(x)}}{|x|^{q(x)}}+\mu_2(x)\frac{|u|^{r(x)}}{|x|^{r(x)}}\right)dx \geq \frac{1}{x_{*}^\tau}\|u\|^{\tau}_{\mathcal{T}},\,\,\ \forall u \in W_0^{1,\mathcal{T}}(\Omega)
\end{equation}
holds, where $\tau\in \{p^-, r^+\}$. Notice that (\ref{e3.11mn}) implies the real number $\frac{1}{x_{*}^\tau}\|u\|^{\tau}_{\mathcal{T}}$ is a positive lower bound for $\int_{\Omega}\left(\frac{|u|^{p(x)}}{|x|^{p(x)}}+\mu_1(x)\frac{|u|^{q(x)}}{|x|^{q(x)}}+\mu_2(x)\frac{|u|^{r(x)}}{|x|^{r(x)}}\right)dx$.
\end{lemma}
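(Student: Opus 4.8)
The plan is to prove the two parts by complementary estimates: part $(i)$ is an upper bound obtained from a phase-by-phase Hardy inequality together with the modular--norm relations, while part $(ii)$ is a softer lower bound that exploits only the boundedness of $\Omega$ and Proposition \ref{Prop:2.2a}.

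For $(i)$, I would first strip off the coefficients. Since $p^-\leq p(x)<q(x)<r(x)$, we have $\tfrac{1}{t(x)}\leq \tfrac{1}{p^-}$ for each $t\in\{p,q,r\}$, which produces the factor $(p^-)^{-1}$; and since $\mu_i\in L^{\infty}(\Omega)$ with $\mu_i\geq 0$, I would replace $\mu_i(x)$ by $\|\mu_i\|_\infty$, which is what yields the factor $(1+\|\mu_1\|_\infty+\|\mu_2\|_\infty)$. This reduces the left-hand side to a constant multiple of $\sum_{t}\int_\Omega \frac{|u|^{t(x)}}{|x|^{t(x)}}\,dx$. To each of these three integrals I would apply the (variable-exponent) Hardy inequality, converting $\int_\Omega \frac{|u|^{t(x)}}{|x|^{t(x)}}\,dx$ into a constant times $\int_\Omega |\nabla u|^{t(x)}\,dx$; bounding the resulting $t$-dependent Hardy constant uniformly over the admissible exponent range $[p^-,r^+]$ by $\max\{C_N(p^-),C_N(r^+)\}$ gives precisely that factor in $C_N(p,r)$.

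It then remains to dominate the three gradient integrals by $\|u\|_{1,\mathcal{T},0}^{\varphi_0}$. For the passage from a gradient modular to the norm I would invoke Remark \ref{Rem:3.4ab} (equivalently Proposition \ref{Prop:2.2a}): $\rho_{\mathcal{T}}(\nabla u)$ is squeezed between $\|u\|_{1,\mathcal{T},0}^{p^-}$ and $\|u\|_{1,\mathcal{T},0}^{r^+}$, the active power being $p^-$ when $\|u\|_{1,\mathcal{T},0}<1$ and $r^+$ when $\|u\|_{1,\mathcal{T},0}\geq 1$, which is exactly the dichotomy defining $\varphi_0$. I expect the genuine difficulty to lie here: the variable-exponent Hardy inequality is itself delicate (in particular it presupposes $p(x)<N$, consistent with $(H_1)$), and, more seriously, after pulling out $\|\mu_i\|_\infty$ one is left to control $\int_\Omega|\nabla u|^{q(x)}\,dx$ and $\int_\Omega|\nabla u|^{r(x)}\,dx$ by the available structure, even though the weights $\mu_1,\mu_2$ defining the higher-order phases of the modular may vanish on part of $\Omega$. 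Keeping these higher phases under control through the continuous embeddings of $W_0^{1,\mathcal{T}}(\Omega)$ (the source of the embedding constant $\hat c_M=\max\{\hat c_1,\hat c_2\}$) is the crux of the argument.

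For $(ii)$ the estimate runs in the opposite direction and is much softer. Since $\Omega$ is bounded, $|x|$ stays below $R:=\sup_{x\in\overline{\Omega}}|x|<\infty$, so each weight $|x|^{-t(x)}$ ($t\in\{p,q,r\}$) is bounded below by a positive constant depending only on $R$ and the exponent range. Pulling out such a uniform lower bound $x_*^{-\tau}$, with $x_*$ built from $R$ and $\tau\in\{p^-,r^+\}$, reduces the left-hand side to $x_*^{-\tau}\rho_{\mathcal{T}}(u)$, because the three integrands reassemble exactly into
\[
\rho_{\mathcal{T}}(u)=\int_\Omega\left(|u|^{p(x)}+\mu_1(x)|u|^{q(x)}+\mu_2(x)|u|^{r(x)}\right)dx .
\]
A final application of Proposition \ref{Prop:2.2a}$(iii)$--$(iv)$ replaces $\rho_{\mathcal{T}}(u)$ by $\|u\|_{\mathcal{T}}^{\tau}$, with $\tau=p^-$ when $\|u\|_{\mathcal{T}}\geq 1$ and $\tau=r^+$ when $\|u\|_{\mathcal{T}}\leq 1$. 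The only care needed is to fix $x_*$ large enough (e.g.\ built from $\max\{R,1\}$) that the case distinctions $x_*\gtrless 1$ and $\|u\|_{\mathcal{T}}\gtrless 1$ remain compatible and the same $\tau$ appears on both sides; this is routine bookkeeping and presents no real obstacle.
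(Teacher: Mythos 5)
Your part $(ii)$ follows the paper's argument essentially verbatim: bound $|x|^{t(x)}$ above by $x_*^{\tau}$ using the boundedness of $\Omega$, reassemble the three integrands into $\rho_{\mathcal{T}}(u)$, and pass to $\|u\|_{\mathcal{T}}^{\tau}$ via Proposition \ref{Prop:2.2a}; nothing to add there. For part $(i)$, however, your route differs from the paper's and leaves a genuine hole. You propose to apply a \emph{variable-exponent} Hardy inequality separately to each of the three integrals $\int_\Omega |u|^{t(x)}/|x|^{t(x)}\,dx$, $t\in\{p,q,r\}$. No such inequality is established or cited in the paper, and it is not a harmless black box: it requires additional hypotheses on the exponent near the singularity that are not assumed here, and the constants $C_N(p^-)$, $C_N(r^+)$ in the statement are the \emph{constant-exponent} Hardy constants. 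The paper avoids the variable-exponent inequality entirely by a pointwise dichotomy on the ratio $|u(x)|/|x|$: where $|x|\le |u(x)|$ one has $(|u|/|x|)^{t(x)}\le (|u|/|x|)^{r^+}$, and where $|x|>|u(x)|$ one has $(|u|/|x|)^{t(x)}\le (|u|/|x|)^{p^-}$, so all three phases collapse at once onto the two constant exponents $r^+$ and $p^-$, to which the classical Hardy inequality (derived through the divergence-theorem identity (\ref{e3.11na})--(\ref{e3.11np})) is applied. That dichotomy, not the modular--norm relations, is what produces exactly the two constants in $\max\{C_N(r^+),C_N(p^-)\}$ and the exponent alternative $\varphi_0\in\{p^-,r^+\}$.

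The more serious gap is the one you yourself label ``the crux'' and then do not close: after replacing $\mu_i(x)$ by $\|\mu_i\|_\infty$ and applying Hardy phase by phase, you must dominate $\int_\Omega|\nabla u|^{q(x)}dx$ and $\int_\Omega|\nabla u|^{r(x)}dx$ by a power of $\|u\|_{1,\mathcal{T},0}$. The embeddings available in the paper (Proposition \ref{Prop:2.7a}$(i)$) give $W_0^{1,\mathcal{T}}(\Omega)\hookrightarrow W_0^{1,h(\cdot)}(\Omega)$ only for $h(x)\le p(x)$; since $q,r>p$ and the weights $\mu_1,\mu_2$ may vanish on sets of positive measure, the $\mathcal{T}$-modular simply does not control these unweighted higher-phase gradient integrals, and Remark \ref{Rem:3.4ab} cannot be substituted for that control. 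The paper's reduction needs only the two constant-exponent quantities $\int_\Omega|\nabla u|^{r^+}dx$ and $\int_\Omega|\nabla u|^{p^-}dx$, which it bounds via the embedding constants $\hat c_1,\hat c_2$ of $W_0^{1,\mathcal{T}}(\Omega)$ into $W_0^{1,r^+}(\Omega)$ and $W_0^{1,p^-}(\Omega)$. Your plan requires strictly more than this and supplies no mechanism for it, so as written the argument for $(i)$ does not go through.
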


\begin{proof} $(i)$ The proof is split in two cases.\\
Case I: $|x| \leq |u(x)|$.\\
Using $(H_1)$, for all $x \in \overline{\Omega}$, $x\neq 0$, we have
\begin{align}\label{e3.11nn}
&\frac{|u|^{p(x)}}{p(x)|x|^{p(x)}}+\mu_1(x)\frac{|u|^{q(x)}}{q(x)|x|^{q(x)}}+\mu_2(x)\frac{|u|^{r(x)}}{r(x)|x|^{r(x)}} \nonumber \\
&\leq \frac{1}{p^-}\left(\frac{|u(x)|^{p(x)}}{|x|^{p(x)}}+\mu_1(x)\frac{|u(x)|^{q(x)}}{|x|^{q(x)}}+\mu_2(x)\frac{|u(x)|^{r(x)}}{|x|^{r(x)}}\right)\nonumber \\
&\leq \frac{(1+\|\mu_1\|_{\infty}+\|\mu_2\|_{\infty})}{p^-} \frac{|u(x)|^{r^+}}{|x|^{r^+}}.
\end{align}
We can write (see Theorem 7 in \cite{evans2022partial})
\begin{align}\label{e3.11na}
\frac{|u(x)|^{r^{+}}}{|x|^{r^{+}}}=\frac{1}{(1-r^{+})} |u|^{r^{+}} \frac{x}{|x|} \cdot \nabla\left(\frac{1}{|x|^{r^{+}-1}}\right).
\end{align}
Then, applying the divergence theorem gives
\begin{align}\label{e3.11nc}
\int_{\Omega}\frac{|u|^{r^{+}}}{|x|^{r^{+}}}dx =\frac{1}{(r^{+}-1)} \int_{\Omega}\nabla \cdot \left(|u|^{r^{+}} \frac{x}{|x|}\right) \frac{1}{|x|^{r^{+}-1}} dx+\frac{1}{(1-r^{+})}\int_{\partial \Omega}|u|^{r^{+}} \frac{x}{|x|^{r^{+}}} \cdot \textbf{n}dS,
\end{align}
which is equal to
\begin{align}\label{e3.11ng}
(2-N)\int_{\Omega}\frac{|u|^{r^{+}}}{|x|^{r^{+}}}dx =\frac{r^{+}}{(r^{+}-1)} \int_{\Omega}|u|^{r^{+}-2}u \nabla u \cdot \frac{x}{|x|^{r^{+}}}dx.
\end{align}
Then,
\begin{align}\label{e3.11nk}
(N-2)\int_{\Omega}\frac{|u|^{r^{+}}}{|x|^{r^{+}}}dx\leq \frac{r^{+}}{(r^{+}-1)} \int_{\Omega}\frac{|u|^{r^{+}-1}}{|x|^{r^{+}-1}} |\nabla u| dx.
\end{align}
Using the H\"{o}lder inequality, we obtain
\begin{align}\label{e3.11np}
\int_{\Omega}\frac{|u|^{r^{+}}}{|x|^{r^{+}}}dx\leq C_{N}(r^{+})\int_{\Omega}|\nabla u|^{r^{+}}dx.
\end{align}
By the continuous embedding $W_0^{1,\mathcal{T}}(\Omega)\hookrightarrow W_0^{1,r^+}(\Omega)$, we get
\begin{align}\label{e3.11nr}
&\int_{\Omega}\left(\frac{|u|^{p(x)}}{p(x)|x|^{p(x)}}+\mu_1(x)\frac{|u|^{q(x)}}{q(x)|x|^{q(x)}}+\mu_2(x)\frac{|u|^{r(x)}}{r(x)|x|^{r(x)}}\right)dx \nonumber \\
&\leq \frac{\hat{c}_1(1+\|\mu_1\|_{\infty}+\|\mu_2\|_{\infty})C_{N}(q^{+})}{p^-}\|u\|_{1,\mathcal{T},0}^{q^{+}}.
\end{align}
Case II: $|x| >|u(x)|$.\\
Considering $(H_1)$ again, for all $x \in \overline{\Omega}$, $x\neq 0$, it reads
\begin{equation}\label{e3.11nnm}
\frac{|u|^{p(x)}}{p(x)|x|^{p(x)}}+\mu_1(x)\frac{|u|^{q(x)}}{q(x)|x|^{q(x)}}+\mu_2(x)\frac{|u|^{r(x)}}{r(x)|x|^{r(x)}}\leq \frac{(1+\|\mu_1\|_{\infty}+\|\mu_2\|_{\infty})}{p^-} \frac{|u(x)|^{p^-}}{|x|^{p^-}}.
\end{equation}
Carrying out a similar analysis gives
\begin{align}\label{e3.11ns}
&\int_{\Omega}\left(\frac{|u|^{p(x)}}{p(x)|x|^{p(x)}}+\mu_1(x)\frac{|u|^{q(x)}}{q(x)|x|^{q(x)}}+\mu_2(x)\frac{|u|^{r(x)}}{r(x)|x|^{r(x)}}\right)dx\nonumber \\
& \leq \frac{\hat{c}_2(1+\|\mu_1\|_{\infty}+\|\mu_2\|_{\infty})}{p^-} \|u\|_{1,\mathcal{T},0}^{p^-}.
\end{align}
Hence, from (\ref{e3.11nr}) and (\ref{e3.11ns}), we conclude that for all $u \in W_0^{1,\mathcal{T}}(\Omega)$ it holds
\begin{equation*}
\int_{\Omega}\left(\frac{|u|^{p(x)}}{|x|^{p(x)}}+\mu_1(x)\frac{|u|^{q(x)}}{|x|^{q(x)}}+\mu_2(x)\frac{|u|^{r(x)}}{|x|^{r(x)}}\right)dx \leq C_{N}(p,r)\|u\|_{1,\mathcal{T},0}^{\varphi_0}.
\end{equation*}
$(ii)$ For $0\neq x \in \mathbb{R}^N$, we have $0<|x| \leq N\times\max_{x_i\in \mathbb{R}^N}|x_i|:=x_{*} \in \mathbb{R}$. Thus, $\max\{|x|^{p(x)},|x|^{q(x)},|x|^{r(x)}\}\leq x_{*}^\tau$, where $\tau\in \{p^-, r^+\} $.
Therefore,
\begin{align}\label{e3.11nt}
\int_{\Omega}\left(\frac{|u|^{p(x)}}{|x|^{p(x)}}+\mu_1(x)\frac{|u|^{q(x)}}{|x|^{q(x)}}+\mu_2(x)\frac{|u|^{r(x)}}{|x|^{r(x)}}\right)dx & \geq \frac{1}{x_{*}^\tau}\int_{\Omega}\left(|u|^{p(x)}+\mu_1(x)|u|^{q(x)}+\mu_2(x)|u|^{r(x)}\right)dx\nonumber \\
&\geq \frac{1}{x_{*}^\tau}\|u\|^{\tau}_{\mathcal{T}}.
\end{align}
\end{proof}
In the sequel, we let $\mathcal{F}(u):=\int_{\Omega}\left(\frac{|u|^{p(x)}}{p(x)|x|^{p(x)}}+\mu_1(x)\frac{|u|^{q(x)}}{q(x)|x|^{q(x)}}+\mu_2(x)\frac{|u|^{r(x)}}{r(x)|x|^{r(x)}}\right)dx $.
\section{Variational Framework and Main Results}
The energy functional $\mathcal{J}_{\lambda}:W_0^{1,\mathcal{T}}(\Omega)\rightarrow \mathbb{R}$ corresponding to equation (\ref{e1.1}) is
\begin{align*}
\mathcal{J}_{\lambda}(u)&=\varrho_{\mathcal{T}}(u)+\mathcal{F}(u)-\int_{\Omega}\frac{m_1(x)|u|^{s(x)}}{s(x)}dx-\lambda\int_{\Omega}\frac{m_2(x)|u|^{1-\beta(x)}}{1-\beta(x)}dx.
\end{align*}
\begin{definition}\label{Def:3.1} A function $u$ is called a weak solution to problem (\ref{e1.1}) if $u\in W_0^{1,\mathcal{T}}(\Omega)$ such that $\einf_{Q}u>0$ for any domain $Q\Subset\Omega$ it holds
\begin{align}\label{e3.2}
\langle\varrho^{\prime}_{\mathcal{T}}(u),\varphi\rangle+\langle \mathcal{F}^{\prime}(u), \varphi\rangle
=\int_{\Omega}m_1(x)|u|^{s(x)-1}\varphi dx+\lambda\int_{\Omega}m_2(x)u^{-\beta(x)}\varphi dx,
\end{align}
for all $\varphi\in W_0^{1,\mathcal{T}}(\Omega)$.
\end{definition}
Due to assumption $(H_1)$, $\mathcal{J}_{\lambda}$ is not bounded below on the whole space $W_0^{1,\mathcal{H}}(\Omega)$. Thus, we need an appropriate subspace of $W_0^{1,\mathcal{T}}(\Omega)$ where $\mathcal{J}_{\lambda}$ is bounded below, and hence, we can search for critical points of $\mathcal{J}_{\lambda}$ which corresponds to the weak solutions to problem \ref{e1.1}. The best candidate for such a subspace is the well-known \textit{Nehari manifold} defined as follows:
\begin{equation*}
\mathcal{M}=\biggl\{u\in W_0^{1,\mathcal{T}}(\Omega)\backslash \{0\}: \langle \mathcal{J}_{\lambda}^{\prime}(u),u\rangle = 0 \biggl\}.
\end{equation*}
With this definition, it is clear that $\mathcal{M}$ must include all \textit{nontrivial} critical points (local minimizers) of $\mathcal{J}_{\lambda}$, if any exists. Therefore, $u \in \mathcal{M}$ if and only if
\begin{align}\label{e3.2a}
J(u):=\langle \mathcal{J}_{\lambda}^{\prime}(u),u\rangle&=\langle\varrho^{\prime}_{\mathcal{T}}(u),u\rangle+\langle \mathcal{F}^{\prime}(u), u\rangle -\int_{\Omega}m_1(x)|u|^{s(x)}dx\nonumber\\
&-\lambda\int_{\Omega}m_2(x)|u|^{1-\beta(x)} dx=0.
\end{align}
Then
\begin{align}\label{e3.2bc}
\langle J^{\prime}(u),u\rangle &= \langle \varrho^{\prime\prime}_{\mathcal{T}}(u),u\rangle +\langle \mathcal{F}^{\prime\prime}(u), u\rangle-\int_{\Omega}m_1(x)s(x)|u|^{s(x)}dx\nonumber\\
&-\lambda\int_{\Omega}m_2(x)(1-\beta(x))|u|^{1-\beta(x)} dx=0.
\end{align}
Thus, considering (\ref{e3.2a}) and (\ref{e3.2bc}) together, one can partition $\mathcal{M}$ into following three disjoint sets:
\begin{align}\label{e3.6}
&\mathcal{M}^+=\biggl\{u \in W_0^{1,\mathcal{T}}(\Omega)\setminus\{0\}: \langle J^{\prime}(u),u\rangle > 0  \biggl\},\nonumber\\
&\mathcal{M}^0=\biggl\{u \in W_0^{1,\mathcal{T}}(\Omega)\setminus\{0\}: \langle J^{\prime}(u),u\rangle=0  \biggl\},\nonumber\\
&\mathcal{M}^-=\biggl\{u \in W_0^{1,\mathcal{T}}(\Omega)\setminus\{0\}: \langle J^{\prime}(u),u\rangle <0  \biggl\}.
\end{align}

\begin{remark}\label{Rem:3.1} Note that we have the following relations
\begin{equation}\label{e3.33a}
0<\langle\varrho^{\prime}_{\mathcal{T}}(u),u\rangle=\rho_{\mathcal{T}}(\nabla u),
\end{equation}
\begin{equation}\label{e3.33b}
0<\langle\varrho^{\prime\prime}_{\mathcal{T}}(u),u\rangle=\langle \rho^{\prime}_{\mathcal{T}}(\nabla u), \nabla u\rangle,
\end{equation}
\begin{equation}\label{e3.33c}
0<p^- \rho_{\mathcal{T}}(\nabla u)\leq \langle \rho^{\prime}_{\mathcal{T}}(\nabla u), \nabla u\rangle \leq r^+ \rho_{\mathcal{T}}(\nabla u),
\end{equation}
\begin{equation}\label{e3.33d}
0<\frac{1}{r^+} \langle \mathcal{F}^{\prime}(u), u\rangle \leq \mathcal{F}(u) \leq \frac{1}{p^-} \langle \mathcal{F}^{\prime}(u), u\rangle,
\end{equation}
\begin{equation}\label{e3.33e}
0<p^- \langle \mathcal{F}^{\prime}(u), u\rangle \leq \langle\mathcal{F}^{\prime\prime}(u), u\rangle \leq r^+ \langle \mathcal{F}^{\prime}(u), u\rangle,
\end{equation}
\end{remark}

\begin{lemma}\label{Lem:3.3bc}
The functional $\mathcal{J}_{\lambda}$ is well-defined, coercive and bounded below on  $\mathcal{M}$.
\end{lemma}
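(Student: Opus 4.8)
The plan is to first confirm that every term of $\mathcal{J}_{\lambda}$ is finite on $W_0^{1,\mathcal{T}}(\Omega)$, and then to use the Nehari constraint to rewrite $\mathcal{J}_{\lambda}$ as a manifestly coercive expression on $\mathcal{M}$. For well-definedness I would bound the four terms separately: $\varrho_{\mathcal{T}}(u)$ is finite by Proposition~\ref{Prop:2.7}; the Hardy term obeys $0\le \mathcal{F}(u)\le C_N(p,r)\|u\|_{1,\mathcal{T},0}^{\varphi_0}<\infty$ by \eqref{e3.11mm}; while the last two terms are controlled by Lemmas~\ref{Lem:3.3b} and \ref{Lem:3.3a} through $\int_{\Omega}\frac{m_1|u|^{s(x)}}{s(x)}\,dx\le \frac{1}{s^-}\int_{\Omega}m_1|u|^{s(x)}\,dx\le \frac{c}{s^-}\delta_2^{s^*}$ and $\lambda\int_{\Omega}\frac{m_2|u|^{1-\beta(x)}}{1-\beta(x)}\,dx\le \frac{\lambda c}{1-\beta^-}\delta_1^{1-\beta^*}$. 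This secures finiteness, hence $\mathcal{J}_{\lambda}$ is well-defined.

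For the lower bound on $\mathcal{M}$, the key device is that $J(u)=\langle \mathcal{J}_{\lambda}'(u),u\rangle=0$ there, so $\mathcal{J}_{\lambda}(u)=\mathcal{J}_{\lambda}(u)-\frac{1}{s^-}J(u)$. Regrouping with $\langle\varrho_{\mathcal{T}}'(u),u\rangle=\rho_{\mathcal{T}}(\nabla u)$ from \eqref{e3.33a}, I would obtain
\begin{align*}
\mathcal{J}_{\lambda}(u)&=\Big(\varrho_{\mathcal{T}}(u)-\tfrac{1}{s^-}\rho_{\mathcal{T}}(\nabla u)\Big)+\Big(\mathcal{F}(u)-\tfrac{1}{s^-}\langle\mathcal{F}'(u),u\rangle\Big)\\
&\quad +\int_{\Omega}m_1(x)|u|^{s(x)}\Big(\tfrac{1}{s^-}-\tfrac{1}{s(x)}\Big)dx+\lambda\int_{\Omega}m_2(x)|u|^{1-\beta(x)}\Big(\tfrac{1}{s^-}-\tfrac{1}{1-\beta(x)}\Big)dx.
\end{align*}
The third integral is nonnegative because $s(x)\ge s^-$. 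Using $\varrho_{\mathcal{T}}(u)\ge \frac{1}{r^+}\rho_{\mathcal{T}}(\nabla u)$ from Remark~\ref{Rem:3.4ab} and $\langle\mathcal{F}'(u),u\rangle\le r^+\mathcal{F}(u)$ from \eqref{e3.33d} together with $\mathcal{F}(u)\ge 0$, the first two brackets are bounded below by $\big(\frac{1}{r^+}-\frac{1}{s^-}\big)\rho_{\mathcal{T}}(\nabla u)$ and $\big(1-\frac{r^+}{s^-}\big)\mathcal{F}(u)\ge0$, respectively, both nonnegative thanks to the exponent ordering $r^+<s^-$.

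It then remains to absorb the last integral, which is negative since $1-\beta(x)<1<s^-$. Estimating $\frac{1}{s^-}-\frac{1}{1-\beta(x)}\ge \frac{1}{s^-}-\frac{1}{1-\beta^+}$ and invoking Lemma~\ref{Lem:3.3a}, I would arrive, for $\|u\|_{1,\mathcal{T},0}\ge1$, at
\[
\mathcal{J}_{\lambda}(u)\ge \Big(\tfrac{1}{r^+}-\tfrac{1}{s^-}\Big)\|u\|_{1,\mathcal{T},0}^{p^-}-\lambda c\Big(\tfrac{1}{1-\beta^+}-\tfrac{1}{s^-}\Big)\|u\|_{1,\mathcal{T},0}^{1-\beta^-},
\]
where $\rho_{\mathcal{T}}(\nabla u)\ge\|u\|_{1,\mathcal{T},0}^{p^-}$ comes from Remark~\ref{Rem:3.4ab}. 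Since $p^->1>1-\beta^-$, the right-hand side diverges to $+\infty$ as $\|u\|_{1,\mathcal{T},0}\to\infty$, which gives coercivity; repeating the estimate with exponents $r^+$ and $1-\beta^+$ on $\{\|u\|_{1,\mathcal{T},0}\le1\}$ keeps $\mathcal{J}_{\lambda}$ bounded below there, so $\mathcal{J}_{\lambda}$ is bounded below on all of $\mathcal{M}$.

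The main obstacle I anticipate is the organisation of the regrouping above: one must eliminate the superlinear term $\int_{\Omega}m_1|u|^{s(x)}dx$, which alone would destroy boundedness below, by selecting the multiplier of the Nehari identity so that the leading modular term $\rho_{\mathcal{T}}(\nabla u)$ and the Hardy term $\mathcal{F}(u)$ \emph{simultaneously} retain strictly positive coefficients; this is precisely where the exponent gap $r^+<s^-$ is used. A secondary difficulty is the singular term, which contributes a negative quantity, but since its growth $\|u\|_{1,\mathcal{T},0}^{1-\beta^-}$ is sublinear (as $1-\beta^-<1<p^-$), it is dominated by the leading term for large $\|u\|_{1,\mathcal{T},0}$ and uniformly bounded for small $\|u\|_{1,\mathcal{T},0}$, which is exactly what Lemma~\ref{Lem:3.3a} is designed to deliver.
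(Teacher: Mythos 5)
Your proposal is correct and follows essentially the same route as the paper: well-definedness by bounding each of the four terms via Proposition~\ref{Prop:2.7}, Lemma~\ref{Lem:3.3c}, Lemma~\ref{Lem:3.3b} and Lemma~\ref{Lem:3.3a}, and then subtracting $\tfrac{1}{s^-}J(u)=0$ on $\mathcal{M}$ to reach the lower bound $\mathcal{J}_{\lambda}(u)\geq \bigl(\tfrac{1}{r^+}-\tfrac{1}{s^-}\bigr)\|u\|_{1,\mathcal{T},0}^{p^-}+\lambda\bigl(\tfrac{1}{s^-}-\tfrac{1}{1-\beta^+}\bigr)c\,\|u\|_{1,\mathcal{T},0}^{1-\beta^-}$, which is exactly the paper's final display. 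The only cosmetic differences are that you make the regrouping and the discarded nonnegative $m_1$-integral explicit, and you drop the Hardy contribution as nonnegative where the paper retains it as an extra positive term.
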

\begin{proof}
Let $u \in \mathcal{M}$. Using Lemmas \ref{Lem:3.3a}-\ref{Lem:3.3c}, we obtain
\begin{equation*}
  |\mathcal{J}_{\lambda}(u)| \leq c_0 \left( \|u\|_{1,\mathcal{T},0}^{r^{+}}+\|u\|_{1,\mathcal{T},0}^{s^{+}}+\|u\|_{1,\mathcal{T},0}^{1-\beta^{-}}\right)<\infty.
\end{equation*}
Hence $\mathcal{J}_{\lambda}$ is well-defined, where $c_0:=\max\left(\frac{1}{p^-}, C_{N}(p,r), c_1, \lambda c_2 \right)$.\\
On the other hand, for $\|u\|_{1,\mathcal{T},0}>1$, it reads
\begin{align*}
\mathcal{J}_{\lambda}(u)
& \geq \left(\frac{1}{r^+}-\frac{1}{s^-}\right)\rho_{\mathcal{T}}(\nabla u)+\left(\frac{1}{r^+}-\frac{1}{s^-}\right) \langle \mathcal{F}^{\prime}(u), u\rangle\nonumber\\
&+\lambda\left(\frac{1}{s^-}-\frac{1}{1-\beta^+}\right)\int_{\Omega}m_2(x)|u|^{1-\beta(x)}dx \nonumber\\
&\geq \left(\frac{1}{r^+}-\frac{1}{s^-}\right)\|u\|_{1,\mathcal{T},0}^{p^{-}}+\left(\frac{1}{r^+}-\frac{1}{s^-}\right) \|u\|_{\mathcal{T}}^{\tau}+c\lambda\left(\frac{1}{s^-}-\frac{1}{1-\beta^+}\right)\|u\|^{1-\beta^-}_{1,\mathcal{T},0},
\end{align*}
which implies that $\mathcal{J}_{\lambda}$ is coercive and bounded below on  $\mathcal{M}$.
\end{proof}

\begin{lemma}\label{Lem:3.3dc}
The functional $\mathcal{J}_{\lambda}$ is continuous on $W_0^{1,\mathcal{T}}(\Omega)$.
\end{lemma}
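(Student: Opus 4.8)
The plan is to establish \emph{sequential} continuity: fix $u\in W_0^{1,\mathcal{T}}(\Omega)$ and let $u_n\to u$ in $W_0^{1,\mathcal{T}}(\Omega)$; I will show $\mathcal{J}_\lambda(u_n)\to\mathcal{J}_\lambda(u)$ by treating each of the four terms in
\[
\mathcal{J}_{\lambda}(u)=\varrho_{\mathcal{T}}(u)+\mathcal{F}(u)-\int_{\Omega}\frac{m_1(x)|u|^{s(x)}}{s(x)}dx-\lambda\int_{\Omega}\frac{m_2(x)|u|^{1-\beta(x)}}{1-\beta(x)}dx
\]
separately. The first term is immediate: by Proposition~\ref{Prop:2.7}, $\varrho_{\mathcal{T}}\in C^{1}(W_0^{1,\mathcal{T}}(\Omega),\mathbb{R})$, so in particular $\varrho_{\mathcal{T}}(u_n)\to\varrho_{\mathcal{T}}(u)$.

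For the two lower-order integral terms I would exploit the compact embeddings already proved. By Lemma~\ref{Lem:3.3b} the embedding $W_0^{1,\mathcal{T}}(\Omega)\hookrightarrow L_{m_1(x)}^{s(x)}(\Omega)$ is continuous, so $u_n\to u$ in the weighted space $L_{m_1(x)}^{s(x)}(\Omega)$; invoking the modular--norm relation of Proposition~\ref{Prop:2.2}$(iii)$ for the associated weighted modular gives $\int_\Omega m_1(x)|u_n|^{s(x)}dx\to\int_\Omega m_1(x)|u|^{s(x)}dx$, and dividing by $s(x)$ preserves the limit. In the same way, Lemma~\ref{Lem:3.3a} yields $u_n\to u$ in $L_{m_2(x)}^{1-\beta(x)}(\Omega)$, whence $\int_\Omega m_2(x)|u_n|^{1-\beta(x)}dx\to\int_\Omega m_2(x)|u|^{1-\beta(x)}dx$; note that since $1-\beta(x)\in(0,1)$ this term is genuinely sublinear and carries no singularity, so no extra care is needed here.

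The hard part will be the Hardy term $\mathcal{F}$, because of the singular weights $|x|^{-p(x)},|x|^{-q(x)},|x|^{-r(x)}$ at the origin. My plan is to bypass a direct estimate of the singular integrand by a convexity argument: each integrand $\frac{|u|^{p(x)}}{p(x)|x|^{p(x)}}$ (and likewise the $q(x)$- and $r(x)$-terms) is convex in $u$ because $p(x),q(x),r(x)>1$ and the weights $\frac{1}{p(x)|x|^{p(x)}}$, $\mu_1(x)\frac{1}{q(x)|x|^{q(x)}}$, $\mu_2(x)\frac{1}{r(x)|x|^{r(x)}}$ are nonnegative; hence $\mathcal{F}$ is a convex functional. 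Moreover, Lemma~\ref{Lem:3.3c}$(i)$ shows $\mathcal{F}$ is finite everywhere and bounded above on bounded sets by $C_{N}(p,r)\|u\|_{1,\mathcal{T},0}^{\varphi_0}$. A convex functional on a Banach space that is finite and locally bounded above is locally Lipschitz, and therefore continuous; this yields $\mathcal{F}(u_n)\to\mathcal{F}(u)$. (Alternatively one could pass to an a.e.\ convergent subsequence and apply a generalized dominated convergence theorem, using the Hardy bound to produce an integrable dominating majorant, but the convexity route avoids manipulating the singular kernel directly.)

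Combining the four convergences gives $\mathcal{J}_\lambda(u_n)\to\mathcal{J}_\lambda(u)$ for every sequence $u_n\to u$, which is precisely the continuity of $\mathcal{J}_\lambda$ on $W_0^{1,\mathcal{T}}(\Omega)$. The only genuinely delicate point is the singular weight in $\mathcal{F}$; all other terms follow from the embedding and modular results already at our disposal.
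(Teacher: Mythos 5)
Your proof is correct, and for three of the four terms it travels essentially the same road as the paper: the $C^{1}$ regularity of $\varrho_{\mathcal{T}}$ handles the gradient term, and the embeddings of Lemmas~\ref{Lem:3.3a} and~\ref{Lem:3.3b} handle the two weighted lower-order terms (the paper makes the passage from modular convergence to convergence of the weighted integrals explicit via a.e.\ convergence of a subsequence and the Vitali convergence theorem; your phrase ``dividing by $s(x)$ preserves the limit'' glosses over this Scheff\'e-type step, but it is routine since $1/s(x)$ and $1/(1-\beta(x))$ are bounded). The genuine divergence is in the Hardy term $\mathcal{F}$. The paper estimates the singular kernel directly: it bounds $|x|^{-p(x)}$, $|x|^{-q(x)}$, $|x|^{-r(x)}$ from above by a constant $x_{0}^{-\tau}$ and reduces $|\mathcal{F}(u_n)-\mathcal{F}(u)|$ to unweighted modular differences such as $\int_{\Omega}\bigl||u_n|^{p(x)}-|u|^{p(x)}\bigr|\,dx$, again treated by Vitali. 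You instead observe that $\mathcal{F}$ is convex (convex integrand in the function value, nonnegative weights), everywhere finite, and bounded above on balls by the Hardy inequality of Lemma~\ref{Lem:3.3c}$(i)$, and then invoke the standard fact that a convex functional on a Banach space that is finite and locally bounded above is locally Lipschitz, hence continuous. This is cleaner and arguably more robust: the paper's pointwise lower bound $\min\{|x|^{p(x)},|x|^{q(x)},|x|^{r(x)}\}\geq x_{0}^{\tau}$ is delicate (indeed unavailable) when the origin lies in $\Omega$, since the kernel $|x|^{-p(x)}$ is then unbounded, whereas your convexity argument never touches the singular weight and rests only on the already-established Hardy upper bound. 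The price you pay is reliance on a nontrivial convex-analysis theorem and on the validity of Lemma~\ref{Lem:3.3c}$(i)$, while the paper's route is self-contained measure theory.
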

\begin{proof}
We know that $\varrho_{\mathcal{T}}$ is of class $C^{1}(W_0^{1,\mathcal{T}}(\Omega), \mathbb{R})$.
Let's assume that $u_n \to u$ in $W_0^{1,\mathcal{T}}(\Omega)$. Then
\begin{align}\label{e3.5acb}
&|\mathcal{J}_{\lambda}(u_n )-\mathcal{J}_{\lambda}(u)|\leq |\varrho_{\mathcal{T}}(u_n )-\varrho_{\mathcal{T}}(u)|\nonumber\\
&+C_0\int_{\Omega}\left(||u_n |^{p(x)}-|u|^{p(x)}|+||u_n |^{q(x)}-|u|^{q(x)}|+||u_n |^{r(x)}-|u|^{r(x)}|\right)dx\nonumber\\
&+\frac{1}{s^-}\int_{\Omega}m_1(x)||u_n |^{s(x)}-|u|^{s(x)}|dx+\frac{\lambda}{1-\beta^+}\int_{\Omega}m_2(x)||u_n |^{1-\beta(x)}-|u|^{1-\beta(x)}|dx,
\end{align}
where $C_0:=\frac{\max\{\|\mu_1\|_{\infty},\|\mu_2\|_{\infty},1\}}{p^-x^{\tau}_{0}}$, $\min_{x_i\in \mathbb{R}^N}|x_i|:=x^{\tau}_{0} \in \mathbb{R}$ and $\min\{|x|^{p(x)},|x|^{q(x)},|x|^{r(x)}\}\geq x_{0}^\tau$.
By the compact embeddings given in Proposition \ref{Prop:2.7a} and in Lemmas \ref{Lem:3.3a}, \ref{Lem:3.3b}, the right-hand side of the inequality in (\ref{e3.5acb}) tends to zero as $n \to \infty$. To see this fact, we only show that $||u_n |^{p(x)}-|u|^{p(x)}| \to 0$ in $L^1(\Omega)$ since the other cases can be obtained in a similar way. Put $h_n(x)=||u_n (x)|^{p(x)}-|u(x)|^{p(x)}|$. Since $u_n \to u$ in $L^{p(x)}(\Omega)$, there exists a subsequence $(u_n)$, not relabelled, and a function $\omega(x)$ in $L^{p(x)}(\Omega)$ such that $u_n(x) \to u(x)$ a.e. in $\Omega$ and $|u_n(x)|\leq \omega(x)$  a.e. in $\Omega$ and for all $n$. Therefore, $\{h_n(x)\} \to 0$  a.e. in $\Omega$ and
\begin{equation}\label{e3.5fa}
\lim_{n \to \infty}\int_{\Omega}|u_n |^{p(x)}=\int_{\Omega}|u|^{p(x)}dx.
\end{equation}
By the Vitali Convergence Theorem (see \cite[Theorem 4.5.4]{bogachev2007measure}), $\{h_n\} \to 0$ in measure in $\Omega$ and the sequence $\{h_n\}$ is uniformly integrable, and hence we have
\begin{equation}\label{e3.5fb}
\lim_{n \to \infty}\int_{\Omega}h_n dx=\lim_{n \to \infty}\int_{\Omega}||u_n |^{p(x)}-|u|^{p(x)}|dx=0.
\end{equation}
Therefore, $\mathcal{J}_{\lambda}$ is continuous on $W_0^{1,\mathcal{T}}(\Omega)$.
\end{proof}

Next, we provide a priori estimate.
\begin{lemma}\label{Lem:3.3bd}
If $(u_{n}) \subset \mathcal{M}$ is a minimizing sequence for $\mathcal{J}_{\lambda}$, that is, $\mathcal{J}_{\lambda}(u_{n})\to \inf_{\mathcal{M}}\mathcal{J}_{\lambda}$ as $n \to \infty$, then there exists a real number $\delta>0$ such that
$$
\delta \leq \|u_{n}\|_{1,\mathcal{T},0}.
$$
\end{lemma}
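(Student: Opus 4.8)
The plan is to argue by contradiction. Suppose no such $\delta>0$ exists; then, passing to a subsequence (not relabelled), we may assume $\|u_n\|_{1,\mathcal{T},0}\to 0$. I will derive a contradiction by showing, on the one hand, that this degeneration forces $\mathcal{J}_\lambda(u_n)\to 0$, and, on the other hand, that the infimum of $\mathcal{J}_\lambda$ over $\mathcal{M}$ is strictly negative, so that a minimizing sequence cannot have energy tending to $0$. (Note that the lemma in fact \emph{forces} $\inf_{\mathcal{M}}\mathcal{J}_\lambda<0$: the Nehari identity together with Lemmas \ref{Lem:3.3a}--\ref{Lem:3.3c} shows that $\mathcal{M}$ does contain elements of arbitrarily small norm, so degeneration could only be excluded through the energy value.)

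First I would settle the easy direction. Since $u_n\in\mathcal{M}$, the well-definedness estimate established in the proof of Lemma \ref{Lem:3.3bc} gives $|\mathcal{J}_\lambda(u_n)|\le c_0(\|u_n\|_{1,\mathcal{T},0}^{r^+}+\|u_n\|_{1,\mathcal{T},0}^{s^+}+\|u_n\|_{1,\mathcal{T},0}^{1-\beta^-})$. As all three exponents are strictly positive and $\|u_n\|_{1,\mathcal{T},0}\to 0$, the right-hand side tends to $0$, whence $\mathcal{J}_\lambda(u_n)\to 0$. (Equivalently, this follows from the continuity of $\mathcal{J}_\lambda$ in Lemma \ref{Lem:3.3dc} together with $\mathcal{J}_\lambda(0)=0$.) Consequently $\inf_{\mathcal{M}}\mathcal{J}_\lambda=\lim_n\mathcal{J}_\lambda(u_n)=0$.

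The substantive step is to show $\inf_{\mathcal{M}}\mathcal{J}_\lambda<0$, contradicting the line above. It suffices to exhibit a single $u_0\in\mathcal{M}$ with $\mathcal{J}_\lambda(u_0)<0$, since then $\inf_{\mathcal{M}}\mathcal{J}_\lambda\le\mathcal{J}_\lambda(u_0)<0$. To this end I would fix $w\in W_0^{1,\mathcal{T}}(\Omega)\setminus\{0\}$, $w>0$, with $\int_\Omega m_2(x)|w|^{1-\beta(x)}\,dx>0$ (possible since $m_2\ge 0$ is nontrivial with compact support) and analyse the fibering map $\psi(t):=\mathcal{J}_\lambda(tw)$, $t>0$. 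Scaling each term, using $\mu_1,\mu_2\ge 0$, dropping the nonpositive $s(x)$-term, and invoking $1-\beta^-<1<p^-$, one obtains for $0<t<1$ a bound of the form $\psi(t)\le c_1 t^{p^-}-\lambda c_2 t^{1-\beta^-}$ with $c_1,c_2>0$. Because the singular power $t^{1-\beta^-}$ dominates as $t\to 0^+$, we get $\psi(t)<0$ for all sufficiently small $t>0$, while $\psi(t)\to 0$ as $t\to 0^+$. Hence $\psi$ possesses a critical point $t_*>0$ realised as a local minimiser with $\psi(t_*)<0$; by construction $t_*w\in\mathcal{M}$ and $\mathcal{J}_\lambda(t_*w)=\psi(t_*)<0$, so $u_0=t_*w$ is the required element.

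The main obstacle is exactly this last step. The fibering map here couples a singular term (the sublinear power $1-\beta(x)<1$, which pulls $\psi$ below $0$ near the origin) with a superlinear term (the $s(x)$-term, which eventually drives $\psi\to-\infty$), so $\psi$ is neither coercive nor monotone in $t$, and one cannot simply minimise over $t>0$ to produce the negative-energy critical point. Making the existence of the local minimiser on $\mathcal{M}$ rigorous, and verifying that $\mathcal{M}^0$ reduces to a harmless set so that this first critical point genuinely lies in $\mathcal{M}$, is where the smallness restriction on $\lambda$ is normally required; once a single negative-energy element of $\mathcal{M}$ is secured, the remaining estimates are routine consequences of Lemmas \ref{Lem:3.3a}--\ref{Lem:3.3c} and Remark \ref{Rem:3.1}.
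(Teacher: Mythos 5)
Your overall strategy is genuinely different from the paper's, and it contains a real gap. The paper never uses the minimizing property of $(u_n)$ at all: it takes the Nehari identity (\ref{e3.2a}) for an element of $\mathcal{M}$ with $\|u_n\|_{1,\mathcal{T},0}<1$, bounds the left-hand side above by $(1+C_N(p,r))\|u_n\|_{1,\mathcal{T},0}^{p^-}$, bounds the right-hand side below by $c_1\|u_n\|_{1,\mathcal{T},0}^{s^+}+c_2\lambda\|u_n\|_{1,\mathcal{T},0}^{1-\beta^-}$ as in (\ref{e3.3dac}), divides by $\|u_n\|_{1,\mathcal{T},0}^{p^-}$ and lets $n\to\infty$; since $1-\beta^--p^-<0$, the singular term blows up and yields the contradiction. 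You instead argue through the energy level: $\|u_n\|_{1,\mathcal{T},0}\to0$ forces $\mathcal{J}_\lambda(u_n)\to0$, hence $\inf_{\mathcal{M}}\mathcal{J}_\lambda=0$, which you try to contradict by producing $u_0\in\mathcal{M}$ with $\mathcal{J}_\lambda(u_0)<0$. Your first step is sound (and your parenthetical doubt about whether the constraint alone can exclude small-norm elements is a legitimate worry about (\ref{e3.3dac}), whose right-hand side needs lower bounds of the weighted integrals in terms of norms that Lemmas \ref{Lem:3.3a}--\ref{Lem:3.3b} do not supply), but the second step is where your proof breaks down.

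Concretely: from ``$\psi(t)<0$ for all sufficiently small $t>0$ and $\psi(t)\to0$ as $t\to0^+$'' you conclude that $\psi$ has a critical point $t_*$ realised as a local minimiser with $\psi(t_*)<0$. This does not follow. Since also $\psi(t)\to-\infty$ as $t\to\infty$ (the $s(\cdot)$-term dominates because $s^->r^+$), $\psi$ could a priori be strictly decreasing on $(0,\infty)$ and have no critical point whatsoever, in which case the ray $\{tw: t>0\}$ meets $\mathcal{M}$ nowhere. To close the argument you must show that $\psi$ increases on some intermediate interval --- i.e.\ that for $\lambda$ small the gradient terms dominate both the singular and the superlinear contributions there, which is exactly the estimate attempted in Lemma \ref{Lem:3.3de} --- and then identify the first zero of $\psi'$ as a local minimum with negative value. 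You explicitly defer this (``the main obstacle is exactly this last step''), so the inequality $\inf_{\mathcal{M}}\mathcal{J}_\lambda<0$, on which your entire contradiction rests, is asserted rather than proved. Two further mismatches with the paper: the lemma as stated carries no smallness restriction on $\lambda$, whereas your route requires one; and because your argument is tied to the strict negativity of the infimum over all of $\mathcal{M}$, it does not transfer to minimizing sequences on $\mathcal{M}^-$, where the infimum is positive --- one of the situations in which the paper later invokes this lemma in Corollary \ref{Cor:4.1}.
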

\begin{proof}
We give a proof by contradiction. To do so, assume that there exists a minimizing $(u_{n}) \subset \mathcal{M}$ such that $u_{n}\rightarrow 0$ in $W_0^{1,\mathcal{T}}(\Omega)$. Therefore, $\|u_{n}\|_{1,\mathcal{T},0}<1$ for all $n=1,2,...$. Then, by (\ref{e3.2a}) we have
\begin{align}\label{e3.3aba}
\rho_{\mathcal{T}}(\nabla u)+\langle \mathcal{F}^{\prime}(u_n), u_n\rangle=\int_{\Omega}m_1(x)|u_n|^{s(x)}dx+\lambda\int_{\Omega}m_2(x)|u_n|^{1-\beta(x)} dx.
\end{align}
Using Remark \ref{Rem:3.4ab}, Lemmas \ref{Lem:3.3a}-\ref{Lem:3.3c} and Proposition \ref{Prop:2.7a}, it reads
\begin{align}\label{e3.3dac}
(1+C_{N}(p,r))\|u_n\|_{1,\mathcal{T},0}^{p^-}\geq c_1\|u_n\|_{1,\mathcal{T},0}^{s^+}+c_2\lambda\|u_n\|_{1,\mathcal{T},0}^{1-\beta^-}.
\end{align}
Dividing (\ref{e3.3dac}) by $\|u_n\|_{1,\mathcal{T},0}^{p^-}$ and passing to limit gives
\begin{align}\label{e3.3db}
(1+C_{N}(p,r))\geq \lim_{n \to \infty} \left(c_1\|u_n\|_{1,\mathcal{T},0}^{s^+-p^-}+c_2\lambda\|u_n\|_{1,\mathcal{T},0}^{1-\beta^--p^-}\right).
\end{align}
However this is a contradiction since $p^->1-\beta^-$.
\end{proof}

\begin{lemma}\label{Lem:3.3d} There is a parameter $\lambda^*$ such that for any $\lambda \in (0,\lambda^*)$ the set $\mathcal{M}^0$ is a null set.
\end{lemma}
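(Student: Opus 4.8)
The plan is to argue by contradiction: assume $\mathcal{M}^0$ contains some $u$ (necessarily $u\neq 0$) and show that the two defining identities $J(u)=0$ and $\langle J'(u),u\rangle=0$, together with the modular bounds of Remark~\ref{Rem:3.1}, trap $\|u\|_{1,\mathcal{T},0}$ between a \emph{positive lower bound independent of $\lambda$} and an \emph{upper bound that vanishes as $\lambda\to 0$}. Write $I_s:=\int_\Omega m_1(x)|u|^{s(x)}\,dx$, $I_\beta:=\lambda\int_\Omega m_2(x)|u|^{1-\beta(x)}\,dx$, and $P:=\langle\varrho^{\prime}_{\mathcal{T}}(u),u\rangle+\langle\mathcal{F}^{\prime}(u),u\rangle=\rho_{\mathcal{T}}(\nabla u)+\langle\mathcal{F}^{\prime}(u),u\rangle$. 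On $\mathcal{M}^0$, equation (\ref{e3.2a}) gives $P=I_s+I_\beta$, while (\ref{e3.2bc}) reads $\langle\varrho^{\prime\prime}_{\mathcal{T}}(u),u\rangle+\langle\mathcal{F}^{\prime\prime}(u),u\rangle=\int_\Omega m_1 s|u|^{s}\,dx+\lambda\int_\Omega m_2(1-\beta)|u|^{1-\beta}\,dx$. First I would feed the bounds $p^-P\le\langle\varrho^{\prime\prime}_{\mathcal{T}}(u),u\rangle+\langle\mathcal{F}^{\prime\prime}(u),u\rangle\le r^+P$ (from (\ref{e3.33c}) and (\ref{e3.33e})), $s^-I_s\le\int_\Omega m_1 s|u|^{s}\,dx\le s^+I_s$, and $(1-\beta^+)I_\beta\le\lambda\int_\Omega m_2(1-\beta)|u|^{1-\beta}\,dx\le(1-\beta^-)I_\beta$ into $\langle J'(u),u\rangle=0$ and eliminate $P$ via $P=I_s+I_\beta$. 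This yields the two-sided comparison
\begin{equation*}
(s^--r^+)\,I_s\le(r^+-1+\beta^+)\,I_\beta,\qquad (p^--1+\beta^-)\,I_\beta\le(s^+-p^-)\,I_s,
\end{equation*}
where all four coefficients are strictly positive because $p^->1$, $\beta^->0$, $r^->1$ and $s^->r^+$ (the last ordering is the one already used implicitly in the coercivity estimate of Lemma~\ref{Lem:3.3bc}).

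For the lower bound I would use the second inequality, $I_\beta\le c_5 I_s$ with $c_5=\frac{s^+-p^-}{p^--1+\beta^-}$, so that $\rho_{\mathcal{T}}(\nabla u)\le P=I_s+I_\beta\le(1+c_5)I_s$. Lemma~\ref{Lem:3.3b} bounds $I_s\le c(\|u\|^{s^-}_{1,\mathcal{T},0}+\|u\|^{s^+}_{1,\mathcal{T},0})$, and Proposition~\ref{Prop:2.2a} gives $\rho_{\mathcal{T}}(\nabla u)\ge\|u\|^{r^+}_{1,\mathcal{T},0}$ when $\|u\|_{1,\mathcal{T},0}\le 1$. Comparing these and invoking $s^->r^+$ forces $\|u\|^{s^--r^+}_{1,\mathcal{T},0}\ge\frac{1}{2c(1+c_5)}$, i.e. $\|u\|_{1,\mathcal{T},0}\ge\delta$ for a constant $\delta>0$ \emph{not depending on $\lambda$} (trivially true as well when $\|u\|_{1,\mathcal{T},0}\ge 1$, so the bound is uniform).

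For the upper bound I would use the first inequality, $I_s\le c_4 I_\beta$ with $c_4=\frac{r^+-1+\beta^+}{s^--r^+}$, so that $\rho_{\mathcal{T}}(\nabla u)\le P=I_s+I_\beta\le(1+c_4)I_\beta$. Since $I_\beta$ carries the factor $\lambda$, Lemma~\ref{Lem:3.3a} gives $I_\beta\le\lambda c(\|u\|^{1-\beta^-}_{1,\mathcal{T},0}+\|u\|^{1-\beta^+}_{1,\mathcal{T},0})$, and again using $\rho_{\mathcal{T}}(\nabla u)\ge\|u\|^{r^+}_{1,\mathcal{T},0}$ (for $\|u\|_{1,\mathcal{T},0}\le 1$) I would obtain $\|u\|^{r^+-1+\beta^+}_{1,\mathcal{T},0}\le 2c(1+c_4)\lambda$, that is $\|u\|_{1,\mathcal{T},0}\le\big(2c(1+c_4)\lambda\big)^{1/(r^+-1+\beta^+)}=:g(\lambda)$, with $g(\lambda)\to 0$ as $\lambda\to 0^+$ because $r^+-1+\beta^+>0$. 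The analogous computation with $\rho_{\mathcal{T}}(\nabla u)\ge\|u\|^{p^-}_{1,\mathcal{T},0}$ shows that for $\lambda$ small the case $\|u\|_{1,\mathcal{T},0}\ge 1$ cannot occur, so only the regime $\|u\|_{1,\mathcal{T},0}\le 1$ remains. Finally, since $g$ is increasing and $g(\lambda)\to 0$, I choose $\lambda^*>0$ with $g(\lambda^*)<\delta$; then for every $\lambda\in(0,\lambda^*)$ any $u\in\mathcal{M}^0$ would satisfy $\delta\le\|u\|_{1,\mathcal{T},0}\le g(\lambda)<\delta$, a contradiction. Hence $\mathcal{M}^0=\emptyset$.

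The main obstacle is the absence of homogeneity in the variable-exponent setting: unlike the constant-exponent case, neither the modular nor the fibering map $t\mapsto\mathcal{J}_\lambda(tu)$ scales as a single power, so the whole argument rests on carefully chained \emph{inequalities} (Remark~\ref{Rem:3.1}, Propositions~\ref{Prop:2.2a}, and Lemmas~\ref{Lem:3.3a}--\ref{Lem:3.3b}) rather than exact identities. Keeping every comparison constant strictly positive—which pins down exactly where $s^->r^+$, $p^->1$ and $\beta^->0$ are needed—and disentangling the $\|u\|_{1,\mathcal{T},0}\lessgtr 1$ cases so that the lower bound $\delta$ stays genuinely independent of $\lambda$ while the upper bound $g(\lambda)$ still decays, is the delicate part of the proof.
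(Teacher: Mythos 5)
Your proof is correct and takes essentially the same route as the paper's: combine the two Nehari identities $J(u)=0$ and $\langle J'(u),u\rangle=0$ with the modular inequalities of Remark~\ref{Rem:3.1} to squeeze $\|u\|_{1,\mathcal{T},0}$ between a $\lambda$-independent positive lower bound and an upper bound that vanishes as $\lambda\to 0^+$, then pick $\lambda^*$ to make these incompatible. If anything your version is more careful, since the paper only treats the regime $\|u\|_{1,\mathcal{T},0}>1$ while you explicitly disentangle both cases $\|u\|_{1,\mathcal{T},0}\lessgtr 1$.
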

\begin{proof}
We argue by contradiction, and assume that there exists $u \in \mathcal{M}^0$ with $\|u\|_{1,\mathcal{T},0}>1$. Then using Remark \ref{Rem:3.1} and (\ref{e3.2a}), it reads
\begin{align}\label{e3.5}
0=\langle J^{\prime}(u),u\rangle &= \langle \varrho^{\prime\prime}_{\mathcal{T}}(u),u\rangle +\langle \mathcal{F}^{\prime\prime}(u), u\rangle-\int_{\Omega}m_1(x)s(x)|u|^{s(x)}dx\nonumber\\
&-\lambda\int_{\Omega}m_2(x)(1-\beta(x))|u_n|^{1-\beta(x)} dx\nonumber\\
&\geq p^-\rho_{\mathcal{T}}(\nabla u)+p^-\langle \mathcal{F}^{\prime}(u), u\rangle \nonumber\\
&-s^+\int_{\Omega}m_1(x)|u|^{s(x)}dx-(1-\beta^-)\lambda\int_{\Omega}m_2(x)|u_n|^{1-\beta(x)} dx\nonumber\\
&\geq (p^--(1-\beta^-))\varrho_{\mathcal{T}}(\nabla u)+(p^--(1-\beta^-))\langle \mathcal{F}^{\prime}(u), u\rangle\nonumber\\
&-(s^+-(1-\beta^-))\int_{\Omega}m_1(x)|u|^{s(x)}dx.
\end{align}
Applying Lemmas \ref{Lem:3.3b}, \ref{Lem:3.3c}, and Proposition \ref{Prop:2.7a} provides
\begin{align}\label{e3.5aa}
(s^+-(1-\beta^-))\|u\|^{s^+}_{1,\mathcal{T},0}&\geq (p^--(1-\beta^-))\|u\|^{p^-}_{1,\mathcal{T},0}+(p^--(1-\beta^-))x_{*}^{-\tau}  \|u\|_{\mathcal{H}}^{r^+},
\end{align}
or
\begin{align}\label{e3.5ab}
\|u\|_{1,\mathcal{T},0}\geq \left(\frac{p^--(1-\beta^-)}{s^+-(1-\beta^-)}\right)^{\frac{1}{s^+-p^-}}.
\end{align}
Carrying out a similar analysis gives
\begin{align}\label{e3.8a}
0=\langle J^{\prime}(u),u\rangle & \leq (r^+-s^-)\rho_{\mathcal{T}}(\nabla u)+(r^+-s^-)\langle \mathcal{F}^{\prime}(u), u\rangle \nonumber\\
&+\lambda(s^--(1-\beta^+))\int_{\Omega}m_2(x)|u|^{1-\beta(x)}dx,
\end{align}
or
\begin{align}\label{e3.8ab}
(s^--r^+)\|u\|^{p^-}_{1,\mathcal{T},0}\leq \lambda(s^--(1-\beta^+))\|u\|_{1,\mathcal{T},0}^{1-\beta^-}+(r^+-s^-)x_{*}^{-\tau}\|u\|_{\mathcal{T}}^{r^+}.
\end{align}
Therefore, we conclude that
\begin{align}\label{e3.8ac}
&\|u\|_{1,\mathcal{T},0}\leq \left(\frac{\lambda (s^--(1-\beta^+))}{s^--r^+}\right)^{\frac{1}{p^--(1-\beta^-)}}.
\end{align}
However, when $\lambda$ is sufficiently small, (\ref{e3.5ab}) and (\ref{e3.8ac}) together imply that $\|u\|_{1,\mathcal{T},0}<1$, which is a contradiction.\\
\end{proof}
\begin{lemma}\label{Lem:3.3de}  There is a parameter $\lambda^*$ such that for any $\lambda \in (0,\lambda^*)$ the set $\mathcal{M}$ has at least one element.
\end{lemma}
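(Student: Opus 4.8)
The plan is to use the fibering method. For a fixed direction $u\in W_0^{1,\mathcal{T}}(\Omega)\setminus\{0\}$, to be chosen below, consider for $t>0$ the fibering map
\[
\phi_u(t):=J(tu)=\langle\mathcal{J}_\lambda'(tu),tu\rangle=\rho_{\mathcal{T}}(t\nabla u)+\langle\mathcal{F}'(tu),tu\rangle-\int_\Omega m_1(x)|tu|^{s(x)}dx-\lambda\int_\Omega m_2(x)|tu|^{1-\beta(x)}dx,
\]
where I have used \eqref{e3.33a}. Since $\mathcal{M}=\{v\neq 0:\langle\mathcal{J}_\lambda'(v),v\rangle=0\}$, we have $tu\in\mathcal{M}$ if and only if $\phi_u(t)=0$, so it suffices to produce a single zero of $\phi_u$ in $(0,\infty)$.

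The real difficulty is that the variable exponents need not be globally ordered: $(H_1)$ gives $r(x)<s(x)$ pointwise but not $r^+\le s^-$, so the global modular bounds (which only compare $t^{r^+}$ with $t^{s^-}$) are inconclusive. I would repair this by localizing the test function. Pick $x_0\in\{x:m_1(x)>0\}$ with $x_0\neq 0$; by continuity of $p,\dots,s$ and $r(x_0)<s(x_0)$, choose a ball $B:=B_\delta(x_0)\subset\{m_1>0\}$ with $0\notin\overline B$ and $\delta$ so small that
\[
\hat r:=\max_{\overline B}r<\min_{\overline B}s=:\hat s,\qquad \hat p:=\max_{\overline B}p\le\hat r.
\]
Take any $u\in C_0^\infty(B)\setminus\{0\}$. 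Then $\int_\Omega m_1|u|^{s(x)}dx=\int_B m_1|u|^{s(x)}dx>0$, and the Hardy term $\langle\mathcal{F}'(tu),tu\rangle$ stays finite and nonnegative (cf.\ \eqref{e3.33d}), since $|x|$ is bounded away from $0$ on $B$.

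With this choice both ends of $\phi_u$ are under control. For $t>1$ the pointwise bounds $t^{p(x)},t^{q(x)},t^{r(x)}\le t^{\hat r}$ and $t^{s(x)}\ge t^{\hat s}$ on $B$ give $\rho_{\mathcal{T}}(t\nabla u)+\langle\mathcal{F}'(tu),tu\rangle\le c\,t^{\hat r}$ and $\int_\Omega m_1|tu|^{s(x)}dx\ge c'\,t^{\hat s}$; since $\hat r<\hat s$ and the singular term grows only like $t^{1-\beta^-}$, it follows that $\phi_u(t)\to-\infty$ as $t\to+\infty$. For $0<t<1$ the reversed inequalities $t^{p(x)}\ge t^{\hat p}$ and $t^{s(x)}\le t^{\hat s}$ yield
\[
\rho_{\mathcal{T}}(t\nabla u)\ge t^{\hat p}\int_B|\nabla u|^{p(x)}dx=:c_1\,t^{\hat p},\qquad \int_\Omega m_1|tu|^{s(x)}dx\le t^{\hat s}\int_B m_1|u|^{s(x)}dx=:c_2\,t^{\hat s},
\]
with $c_1,c_2>0$. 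Because $\hat p\le\hat r<\hat s$, the difference $c_1t^{\hat p}-c_2t^{\hat s}$ is strictly positive for all sufficiently small $t$, so I can fix $t_*\in(0,1)$ with $c_1t_*^{\hat p}-c_2t_*^{\hat s}=:m_*>0$.

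It remains to absorb the singular term and to fix the threshold. Dropping the nonnegative Hardy contribution, $\phi_u(t_*)\ge m_*-\lambda\int_\Omega m_2|t_*u|^{1-\beta(x)}dx$. If this last integral vanishes then $\phi_u(t_*)\ge m_*>0$ for every $\lambda>0$; otherwise, by Lemma \ref{Lem:3.3a} it is finite and positive, and setting $\lambda^*:=m_*\big/\int_\Omega m_2|t_*u|^{1-\beta(x)}dx$ (shrunk, if necessary, below the constant of Lemma \ref{Lem:3.3d}) ensures $\phi_u(t_*)>0$ for all $\lambda\in(0,\lambda^*)$. Since $\phi_u$ is continuous with $\phi_u(t_*)>0$ and $\phi_u(t)\to-\infty$, the intermediate value theorem yields $t_0>t_*$ with $\phi_u(t_0)=0$, hence $t_0u\in\mathcal{M}$ and $\mathcal{M}\neq\emptyset$. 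I expect the exponent bookkeeping of the third paragraph to be the only genuinely delicate step: the localization to a ball where $p,\dots,s$ are almost constant is exactly what compensates for the possible failure of $r^+\le s^-$, while the smallness of $\lambda$ enters solely to keep the singular contribution below the positive margin $m_*$.
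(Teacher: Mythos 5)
Your proof is correct, but it takes a genuinely different route from the paper's. The paper parameterizes the \emph{energy} along a ray, $\Phi_u(t)=\mathcal{J}_\lambda(tu)$ for an arbitrary fixed $u\neq 0$, proves continuity of $\Phi_u$, argues that $\Phi_u(t)>0$ for small $t$ (for $\lambda$ small) and $\Phi_u(t)<0$ for large $t$, and concludes that $\Phi_u$ attains an interior maximum $t_u$, whence $\Phi_u^{\prime}(t_u)=\langle\mathcal{J}_\lambda^{\prime}(t_uu),u\rangle=0$ and $t_uu\in\mathcal{M}$. You instead work with $\phi_u(t)=\langle\mathcal{J}_\lambda^{\prime}(tu),tu\rangle=t\,\Phi_u^{\prime}(t)$ and produce a sign change directly by the intermediate value theorem, after localizing the test function to a small ball in $\{m_1>0\}$ away from the origin. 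Your version buys three things. First, the localization removes any reliance on the global ordering $r^+<s^-$, which is not part of $(H_1)$ (the paper uses it implicitly here and elsewhere); comparing $\max_{\overline B}r$ with $\min_{\overline B}s$ on a small ball is exactly the right fix. Second, you sidestep the behavior of the fibering map near $t=0$: since $1-\beta^+<1<r^+$, the singular term $-\lambda t^{1-\beta(x)}$ dominates as $t\to 0^+$ whenever $\int_\Omega m_2|u|^{1-\beta(x)}dx>0$, so the paper's claim that $\Phi_u(t)>0$ for all sufficiently small $t$ is delicate as written; you only require positivity of $\phi_u$ at one fixed $t_*$, purchased by shrinking $\lambda$, which is robust. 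Third, you need only continuity of $t\mapsto\phi_u(t)$, not differentiability of $\Phi_u$. What the paper's formulation buys in exchange is the stronger (and later useful) statement that \emph{every} ray $\{tu:t>0\}$ meets $\mathcal{M}$, whereas you exhibit a single element --- which is all the lemma asserts. Two small points to make explicit: you tacitly assume $m_1\not\equiv 0$ so that $\{m_1>0\}$ contains a point $x_0\neq 0$ (the paper assumes this implicitly as well), and the upper bound $\rho_{\mathcal{T}}(t\nabla u)+\langle\mathcal{F}^{\prime}(tu),tu\rangle\leq c\,t^{\hat r}$ for $t>1$ should cite $\|\mu_1\|_\infty,\|\mu_2\|_\infty<\infty$ and the lower bound on $|x|$ over $\overline B$.
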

\begin{proof}
We will show that for any $u \in W_0^{1,\mathcal{T}}(\Omega)\setminus \{0\}$, there exists a unique value $t(u)=t_u>0$ such that $t_u u\in \mathcal{M}$.\\
To this end, let's fix $u \in W_0^{1,\mathcal{T}}(\Omega)\setminus \{0\}$. Then for any $t\geq0$, the scaled functional $\mathcal{J}(tu)$ determines a curve that can be parameterized by
\begin{equation}\label{e3.4de}
   \Phi_u(t) := \mathcal{J}_{\lambda}(tu), \,\,\, t \in [0,\infty).
\end{equation}
Therefore, for $t \in [0,\infty)$, $t_uu\in \mathcal{M}$ if and only if
\begin{equation}\label{e3.5de}
  0=\Phi_u^{\prime}(t)=\frac{d}{dt}\Phi_u(t)\bigg|_{t=t_u}=\langle \mathcal{J}^{\prime}_{\lambda}(t_uu), u \rangle.
\end{equation}
First, we show that $\Phi_u:[0,\infty) \longrightarrow \mathbb{R}$ is continuous. Let's fix $u \in W_0^{1,\mathcal{T}}(\Omega)\setminus \{0\}$ and define the linear functional $\gamma_u:[0,\infty) \longrightarrow W_0^{1,\mathcal{T}}(\Omega)$ by $\gamma_u(t):=tu$. Since $W_0^{1,\mathcal{T}}(\Omega)$ is a vector space, there exists $v_t \in W_0^{1,\mathcal{T}}(\Omega)$ such that $tu=v_t$ a.e. in $\Omega$. Next, let $t_1, t_2 \in [0,\infty)$. Then,
\begin{equation}\label{e3.5df}
\|\gamma_u(t_1)-\gamma_u(t_2)\|_{1,\mathcal{T},0}=\|(t_1-t_2)u\|_{1,\mathcal{T},0}=|t_1-t_2|\|u\|_{1,\mathcal{T},0} \to 0 \text{ as } t_1 \to t_2.
\end{equation}
Thus, $\gamma_u$ is a continuous function. Moreover, by Lemma \ref{Lem:3.3dc}, $\mathcal{J}_{\lambda}$ is continuous on $W_0^{1,\mathcal{T}}(\Omega)$. Hence, if we let $\Phi_u=\mathcal{J}_{\lambda} \circ \gamma_u$, then as a composition function, $\Phi_u$ is continuous.\\
Case I:\\
By Lemmas \ref{Lem:3.3a}-\ref{Lem:3.3c} and Remark \ref{Rem:3.1} it reads
\begin{align}\label{e3.4def}
\Phi_u(t)&=\varrho_{\mathcal{T}}(tu)+\mathcal{F}(tu)
-\int_{\Omega}\frac{m_1(x)|tu|^{s(x)}}{s(x)}dx-\lambda\int_{\Omega}\frac{m_2(x)|tu|^{1-\beta(x)}}{1-\beta(x)}dx \nonumber\\
&\geq \frac{1}{r^{+}} \|tu\|^{r^{+}}_{1,\mathcal{T},0}+\frac{1}{x_{*}^\tau}\|tu\|^{r^+}_{\mathcal{T}}-\frac{1}{s^-}\|tu\|^{s^-}_{1,\mathcal{T},0}-\frac{\lambda}{1-\beta^+} \|tu\|^{1-\beta^+}_{1,\mathcal{T},0}\nonumber\\
&\geq  t^{r^{+}} \left(\frac{1}{r^{+}}-\frac{\lambda}{1-\beta^+} \right) \|u\|^{r^{+}}_{1,\mathcal{T},0} +\frac{t^{\tau}}{x_{*}^\tau}\|u\|^{r^+}_{\mathcal{T}}-\frac{t^{s^-}}{s^-}\|u\|^{s^-}_{1,\mathcal{T},0}.
\end{align}
Therefore, if we let $\lambda \in \left(0,\frac{1-\beta^+}{r^{+}}\right)$, then there exists $M_1>0$ such that when $t \in (0, M_1)$ is small enough, it follows that $\Phi_u(t)>0$.\\
Case II:\\
In a similar way we have
\begin{align}\label{e3.4deg}
\Phi_u(t)&\leq t^{r^+} \left(\frac{1}{p^-}+C_{N}(p,r)\right)\|u\|_{1,\mathcal{T},0}^{r^+}
-\frac{t^{s^+}}{s^+}\int_{\Omega}m_1(x)|u|^{s(x)}dx\nonumber\\
&- \frac{\lambda t^{1-\beta^+}}{1-\beta^-}\int_{\Omega}m_2(x)|u|^{1-\beta(x)}dx.
\end{align}
Therefore, there exists $M_2>0$ such that when $t \in (M_2, \infty)$ is large enough, $\Phi_u(t)<0$.\\
Putting all these together and considering that $\Phi_u(0)=0$ means that $\Phi_u(t)$ assumes a local maximum at $t_u$ in $[0, \infty)$, and hence, it is a critical point for $\Phi_u$. Thus,
\begin{equation}\label{e3.4dde}
  0=\Phi_u^{\prime}(t_u)=\langle \mathcal{J}^{\prime}_{\lambda}(t_uu), u \rangle,
\end{equation}
which implies that  $t_uu\in \mathcal{M}$.\\
\end{proof}

The first main result of the paper is:
\begin{theorem}\label{Thm:3.3a}
There is a parameter $\lambda^*$ such that for any $\lambda \in (0,\lambda^*)$, $\mathcal{J}_{\lambda}$ has a minimizer $m_{\lambda}^+$ on $\mathcal{M}^+$ such that
$\inf_{u\in \mathcal{M}^+}\mathcal{J}_{\lambda}(u)=m_{\lambda}^+<0$, i.e. with the negative energy level, provided $(s^+-p^-)(1-\beta^-)<p^-(s^--r^+)$, and $s^-+\beta^+\leq s^++\beta^-$.
\end{theorem}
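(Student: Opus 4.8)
The plan is to run the direct method of the calculus of variations on $\mathcal{M}^+$, using the fibering analysis of Lemma~\ref{Lem:3.3de} to control the sign of the energy and the reflexivity/compactness furnished by Proposition~\ref{Prop:2.7a} and Lemmas~\ref{Lem:3.3a}, \ref{Lem:3.3b}.

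First I would pin down the sign of the infimum. For fixed $u\in\mathcal{M}^+$ the fibering map $\Phi_u(t)=\mathcal{J}_\lambda(tu)$ satisfies $\Phi_u(0)=0$, and the singular term contributes $-\lambda t^{-\beta(x)}$-type behaviour so that $\Phi_u'(0^+)=-\infty$ as soon as $\int_\Omega m_2(x)|u|^{1-\beta(x)}dx>0$. That this integral is positive on $\mathcal{M}^+$ follows from the $\mathcal{M}^+$ inequality $\langle J'(u),u\rangle>0$ together with the gap $s^->r^+$ (contained in $(s^+-p^-)(1-\beta^-)<p^-(s^--r^+)$) and the a priori bound $\|u\|_{1,\mathcal{T},0}\ge\delta$ of Lemma~\ref{Lem:3.3bd}, which in fact yields a uniform lower bound $\int_\Omega m_2(x)|u|^{1-\beta(x)}dx\ge c_0>0$. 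Since $t=1$ is then the local minimum of $\Phi_u$ and $\Phi_u$ has already descended below $\Phi_u(0)=0$, we get $\mathcal{J}_\lambda(u)=\Phi_u(1)<0$ on all of $\mathcal{M}^+$; combined with the coercivity and boundedness below of Lemma~\ref{Lem:3.3bc} this gives $-\infty<m_\lambda^+<0$.

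Next I would take a minimizing sequence $(u_n)\subset\mathcal{M}^+$ with $\mathcal{J}_\lambda(u_n)\to m_\lambda^+$. Coercivity (Lemma~\ref{Lem:3.3bc}) makes $(u_n)$ bounded, so by reflexivity $u_n\rightharpoonup u_*$ up to a subsequence, and the compact embeddings give $u_n\to u_*$ in $L_{m_1(x)}^{s(x)}(\Omega)$ and $L_{m_2(x)}^{1-\beta(x)}(\Omega)$, whence the superlinear and singular integrals (and their $1/s(x)$, $1/(1-\beta(x))$ weighted versions) converge. Nontriviality $u_*\ne0$ follows because, if $u_*=0$, those integrals vanish while $\varrho_\mathcal{T}\ge0$ and $\mathcal{F}\ge0$, forcing $\liminf\mathcal{J}_\lambda(u_n)\ge0>m_\lambda^+$. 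Passing the uniform lower bound $\int_\Omega m_2(x)|u_n|^{1-\beta(x)}dx\ge c_0$ to the limit shows $\int_\Omega m_2(x)|u_*|^{1-\beta(x)}dx>0$, so $u_*$ has a well-defined fibering projection $t_*u_*\in\mathcal{M}^+$. Since $\varrho_\mathcal{T}$ and $\mathcal{F}$ are convex and continuous, they are weakly lower semicontinuous, and combined with the strong convergence of the lower-order terms this gives $\mathcal{J}_\lambda(u_*)\le\liminf\mathcal{J}_\lambda(u_n)=m_\lambda^+$.

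It remains to show $u_*\in\mathcal{M}^+$, which then forces $\mathcal{J}_\lambda(u_*)\ge m_\lambda^+$ and hence equality, identifying $u_*$ as the minimizer $m_\lambda^+$. This is the main obstacle, because the multi-phase modular $\varrho_\mathcal{T}$ is only weakly lower semicontinuous, so one cannot simply pass to the limit in the second-order constraint $\langle J'(u_n),u_n\rangle>0$ defining $\mathcal{M}^+$, nor (owing to the $u^{-\beta(x)}$ singularity) test the Euler--Lagrange relation with $u_*$. The route I would take is to upgrade to strong convergence: using $\mathcal{J}_\lambda(u_*)\le m_\lambda^+\le\mathcal{J}_\lambda(t_*u_*)=\Phi_{u_*}(t_*)$ together with the minimality of the projection and the fact that $\mathcal{M}^0=\varnothing$ for $\lambda\in(0,\lambda^*)$ (Lemma~\ref{Lem:3.3d}) to keep the limit strictly inside the $\mathcal{M}^+$ branch (i.e. $t_*=1$), and then invoking the uniform convexity of $W_0^{1,\mathcal{T}}(\Omega)$ and the $(S_+)$-type behaviour of $\varrho_\mathcal{T}'$ to conclude $u_n\to u_*$ strongly and $\mathcal{J}_\lambda(u_*)=m_\lambda^+<0$. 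Throughout, the structural hypotheses $(s^+-p^-)(1-\beta^-)<p^-(s^--r^+)$ and $s^-+\beta^+\le s^++\beta^-$ are exactly the quantitative exponent inequalities that make the fibering estimates and the two-regime modular bounds (Proposition~\ref{Prop:2.2a}, Remark~\ref{Rem:3.4ab}, applied according as $\|u_n\|_{1,\mathcal{T},0}\lessgtr1$) close.
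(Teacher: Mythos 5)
Your overall architecture (show $\mathcal{J}_\lambda<0$ on $\mathcal{M}^+$, run the direct method, upgrade weak to strong convergence) matches the paper's, but the crucial first step is argued differently and contains a genuine gap. You deduce $\Phi_u(1)<0$ from ``$\Phi_u(0)=0$, $\Phi_u'(0^+)=-\infty$, and $t=1$ is a local minimum.'' That implication is false in general: $\Phi_u$ could dip negative near $0$, rise through a local maximum with positive value, and then have a local minimum at $t=1$ with $\Phi_u(1)>0$. To close this you would need the standard structural fact that the fibering map has at most two critical points (local min followed by local max), which for a three-phase variable-exponent modular plus a Hardy term is not automatic and is nowhere established in your sketch. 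Symptomatically, your argument never actually uses the hypothesis $(s^+-p^-)(1-\beta^-)<p^-(s^--r^+)$, whereas in the paper this inequality is precisely what makes the proof work: the paper combines the Nehari identity $J(u)=0$ with the $\mathcal{M}^+$ inequality $\langle J'(u),u\rangle>0$ to extract the lower bound
\begin{equation*}
\lambda\int_{\Omega}m_2(x)|u|^{1-\beta(x)}dx\;\geq\;\left(\frac{s^--r^+}{s^--(1-\beta^+)}\right)\left(\rho_{\mathcal{T}}(\nabla u)+\langle \mathcal{F}^{\prime}(u), u\rangle\right),
\end{equation*}
substitutes it into an upper bound for $\mathcal{J}_{\lambda}(u)$, and observes that the resulting coefficient of $\rho_{\mathcal{T}}(\nabla u)+\langle \mathcal{F}^{\prime}(u), u\rangle$ is negative exactly under the stated exponent conditions. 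This purely algebraic route avoids any analysis of the shape of $\Phi_u$; it also gives you, as a byproduct, the positivity of $\int_\Omega m_2|u|^{1-\beta(x)}dx$ on $\mathcal{M}^+$, which you instead try (incorrectly) to extract from the norm lower bound of Lemma~\ref{Lem:3.3bd} --- a lower bound on $\|u\|_{1,\mathcal{T},0}$ does not control an integral weighted by a compactly supported $m_2$.

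The convergence step also diverges from the paper and is underdetermined as written: you propose an $(S_+)$-argument for $\varrho_{\mathcal{T}}'$, but applying $(S_+)$ requires $\limsup_n\langle\varrho_{\mathcal{T}}'(u_n),u_n-u_*\rangle\leq 0$, which normally comes from testing the Euler--Lagrange relation --- exactly the step you concede is blocked by the $u^{-\beta(x)}$ singularity. The paper instead argues by contradiction: if $u_n^+\nrightarrow u^+$ strongly, then $\rho_{\mathcal{T}}(\nabla u^+)<\liminf_n\rho_{\mathcal{T}}(\nabla u_n^+)$ strictly, and passing to the limit in a lower bound for $\mathcal{J}_\lambda(u_n^+)$ forces $m_\lambda^+>0$, contradicting $m_\lambda^+<0$ from the first step. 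You should either adopt that contradiction argument or supply the missing inequality needed for $(S_+)$.
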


\begin{proof}
Let $u \in \mathcal{M}^+$. Then
\begin{align}\label{e3.14}
\mathcal{J}_{\lambda}(u)&\leq \frac{1}{p^-}\rho_{\mathcal{T}}(\nabla u)+\frac{1}{p^-} \langle \mathcal{F}^{\prime}(u), u\rangle-\frac{1}{s^+}\int_{\Omega}m_1(x)|u|^{s(x)}dx\nonumber\\
&-\frac{\lambda}{1-\beta^-}\int_{\Omega}m_2(x)|u|^{1-\beta(x)}dx,
\end{align}
and  from (\ref{e3.2a})
\begin{align}\label{e3.15}
&r^+\rho_{\mathcal{T}}(\nabla u)+r^+ \langle \mathcal{F}^{\prime}(u), u\rangle
-s^-\int_{\Omega}m_1(x)|u|^{s(x)}dx-\lambda(1-\beta^+)\int_{\Omega}m_2(x)|u_n|^{1-\beta(x)} dx>0.
\end{align}
Multiplying (\ref{e3.2a}) by $(-s^-)$ and adding to (\ref{e3.15}) gives
\begin{align}\label{e3.16}
\lambda\int_{\Omega}m_2(x)|u|^{1-\beta(x)}dx\geq \left(\frac{s^--r^+}{s^--(1-\beta^+)}\right)\left(\rho_{\mathcal{T}}(\nabla u)+\langle \mathcal{F}^{\prime}(u), u\rangle\right).
\end{align}
Next, using (\ref{e3.2a}) and (\ref{e3.14}) together, it reads
\begin{align}\label{e3.17}
\mathcal{J}_{\lambda}(u)&\leq \left(\frac{1}{p^-}-\frac{1}{s^+}\right)\left(\rho_{\mathcal{T}}(\nabla u)+\langle \mathcal{F}^{\prime}(u), u\rangle\right)+\lambda\left(\frac{1}{s^+}-\frac{1}{1-\beta^-}\right)\int_{\Omega}m_2(x)|u|^{1-\beta(x)}dx.
\end{align}
Plugging (\ref{e3.16}) in (\ref{e3.17}) and simplifying the expressions gives
\begin{align}\label{e3.18}
\mathcal{J}_{\lambda}(u)
&\leq \left(\left(\frac{1}{p^-}-\frac{1}{s^+}\right)+\left(\frac{1}{s^+}-\frac{1}{1-\beta^-}\right)\left(\frac{s^--r^+}{s^--(1-\beta^+)}\right)\right)\left(\rho_{\mathcal{T}}(\nabla u)+\langle \mathcal{F}^{\prime}(u), u\rangle\right)
\end{align}
However, due to the assumptions, the coefficient of $(\rho_{\mathcal{T}}(\nabla u)+\langle\mathcal{F}^{\prime}(u), u\rangle)$ is negative. Therefore, $\mathcal{J}_{\lambda}(u)<0$ on $\mathcal{M}^+$.\\
Since $\mathcal{J}_{\lambda}$ is coercive and bounded below on $\mathcal{M}^+$, there exists a real number $m_{\lambda}^+$ with $m_{\lambda}^+:=\inf_{u\in \mathcal{M}^+}\mathcal{J}_{\lambda}(u)<0$, and a minimizing sequence $(u^+_n)\subset \mathcal{M}^+$ such that $\lim_{n\rightarrow\infty}\mathcal{J}_{\lambda}(u^+_{n})=\inf_{u\in \mathcal{M}^+}\mathcal{J}_{\lambda}(u)=m_{\lambda}^+<0$. Considering the coercivity of $\mathcal{J}_{\lambda}$,  by the standard arguments we have $u^+_{n}\rightharpoonup u^+$ in  $W_0^{1,\mathcal{T}}(\Omega)$. We claim that $u^+_{n}\to u^+$ in  $W_0^{1,\mathcal{T}}(\Omega)$. To show this, we argue by contradiction, and assume that $u^+_{n} \nrightarrow u^+$  in  $W_0^{1,\mathcal{T}}(\Omega)$.
Using (\ref{e3.2a}), it reads
\begin{align}\label{e3.20}
\mathcal{J}_{\lambda}(u^+_{n})&\geq \left(\frac{1}{r^+}-\frac{1}{s^-}\right)(\rho_{\mathcal{T}}(\nabla u^+_{n})+\langle\mathcal{F}^{\prime}(u^+_{n}), u^+_{n}\rangle)\nonumber\\
&+\lambda\left(\frac{1}{s^-}-\frac{\lambda}{1-\beta^-}\right)\int_{\Omega}m_2(x)|u^+_{n}|^{1-\beta(x)}dx.
\end{align}
Since $u^+_{n}\rightharpoonup u^+$ in  $W_0^{1,\mathcal{T}}(\Omega)$, by Lemma \ref{Lem:3.3a} we have
\begin{align}\label{e3.21}
\int_{\Omega}m_2(x)|u^+|^{1-\beta(x)}dx=\liminf_{n \to \infty}\int_{\Omega}m_2(x)|u^+_{n}|^{1-\beta(x)}dx.
\end{align}
Thus, taking limit in (\ref{e3.20}), and considering that
$$
\rho_{\mathcal{T}}(\nabla u^+)< \liminf_{n \to \infty}\rho_{\mathcal{T}}(\nabla u^+_{n}),
$$
and Fatou's lemma, we get
\begin{align}\label{e3.22}
0>m_{\lambda}^+=\mathcal{J}_{\lambda}(u^+)&\geq \left(\frac{1}{r^+}-\frac{1}{s^-}\right)\|u^+\|^{p^-}_{1,\mathcal{T},0}+\frac{1}{x_{*}^\tau}\left(\frac{1}{r^+}-\frac{1}{s^-}\right)\|u^+\|^{p^-}_{\mathcal{T}} \nonumber\\
&+c\lambda\left(\frac{1}{s^-}-\frac{1}{1-\beta^-}\right)\|u^+\|^{1-\beta^-}_{1,\mathcal{T},0}>0,
\end{align}
if $\|u^+\|_{1,\mathcal{T},0}>1$. This contradiction proves that $u^+_{n}\to u^+$ in  $W_0^{1,\mathcal{T}}(\Omega)$. In conclusion, we obtain that
\begin{align}\label{e3.23}
\mathcal{J}_{\lambda}(u^+)=\lim_{n\rightarrow\infty}\mathcal{J}_{\lambda}(u^+_{n})=\inf_{u\in \mathcal{M}^+}\mathcal{J}_{\lambda}(u)=m_{\lambda}^+<0.
\end{align}
\end{proof}

The second main result of the paper is:
\begin{theorem}\label{Thm:3.3b}
There is a parameter $\lambda^*$ such that for any $\lambda \in (0,\lambda^*)$, $\mathcal{J}_{\lambda}$ has a minimizer $m_{\lambda}^-$ on $\mathcal{M}^-$ such that $0<m_{\lambda}^-=\inf_{u\in \mathcal{M}^-}\mathcal{J}_{\lambda}(u)$, i.e. with the positive energy level.

\end{theorem}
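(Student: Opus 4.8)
The plan is to run the Nehari-minimization scheme of Theorem~\ref{Thm:3.3a}, now on the branch $\mathcal{M}^-$, where the fibering map $\Phi_u$ attains its maximum and the energy level is positive. \emph{Positive energy level.} For $u\in\mathcal{M}^-$ I first show $\mathcal{J}_{\lambda}(u)>0$. Using $\varrho_{\mathcal{T}}(u)\ge\frac{1}{r^+}\rho_{\mathcal{T}}(\nabla u)$ (Remark~\ref{Rem:3.4ab}) and $\mathcal{F}(u)\ge\frac{1}{r^+}\langle\mathcal{F}^{\prime}(u),u\rangle$ (Remark~\ref{Rem:3.1}) together with the Nehari identity (\ref{e3.2a}) to eliminate $\int_{\Omega}m_1|u|^{s}dx$, one obtains
\[
\begin{aligned}
\mathcal{J}_{\lambda}(u)&\ge\Bigl(\tfrac{1}{r^+}-\tfrac{1}{s^-}\Bigr)\bigl(\rho_{\mathcal{T}}(\nabla u)+\langle\mathcal{F}^{\prime}(u),u\rangle\bigr)\\
&\quad+\lambda\Bigl(\tfrac{1}{s^-}-\tfrac{1}{1-\beta^+}\Bigr)\int_{\Omega}m_2(x)|u|^{1-\beta(x)}dx.
\end{aligned}
\]
To control the negative singular contribution I exploit the defining inequality of $\mathcal{M}^-$: inserting the bounds of Remark~\ref{Rem:3.1} into the expression (\ref{e3.2bc}) for $\langle J^{\prime}(u),u\rangle$, which is negative on $\mathcal{M}^-$, and again using (\ref{e3.2a}) to remove the $s$-term, yields the \emph{upper} estimate $\lambda\int_{\Omega}m_2(x)|u|^{1-\beta(x)}dx<\tfrac{s^+-p^-}{s^+-(1-\beta^-)}\bigl(\rho_{\mathcal{T}}(\nabla u)+\langle\mathcal{F}^{\prime}(u),u\rangle\bigr)$. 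Substituting this into the previous bound and invoking the exponent ordering $1-\beta^+<p^-\le r^+<s^-$ (in particular $r^+<s^-$, which underlies the hypotheses of Theorem~\ref{Thm:3.3a}) leaves a strictly positive coefficient multiplying the strictly positive quantity $\rho_{\mathcal{T}}(\nabla u)+\langle\mathcal{F}^{\prime}(u),u\rangle$; hence $\mathcal{J}_{\lambda}(u)>0$ on $\mathcal{M}^-$. By Lemma~\ref{Lem:3.3bc} the functional is coercive and bounded below on $\mathcal{M}^-$, so $m_{\lambda}^-:=\inf_{\mathcal{M}^-}\mathcal{J}_{\lambda}\ge 0$ is well defined and I fix a minimizing sequence $(u_n^-)\subset\mathcal{M}^-$.

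\emph{Nontrivial weak limit.} Coercivity bounds $(u_n^-)$, so up to a subsequence $u_n^-\rightharpoonup u^-$ in $W_0^{1,\mathcal{T}}(\Omega)$, while Lemma~\ref{Lem:3.3bd} gives $\|u_n^-\|_{1,\mathcal{T},0}\ge\delta>0$. If $u^-=0$, the compact embeddings of Lemmas~\ref{Lem:3.3a}--\ref{Lem:3.3b} yield $\int_{\Omega}m_1|u_n^-|^{s}dx\to0$ and $\int_{\Omega}m_2|u_n^-|^{1-\beta}dx\to0$; then (\ref{e3.2a}) together with the positivity $\rho_{\mathcal{T}}(\nabla u_n^-)>0$, $\langle\mathcal{F}^{\prime}(u_n^-),u_n^-\rangle>0$ of Remark~\ref{Rem:3.1} forces $\rho_{\mathcal{T}}(\nabla u_n^-)\to0$, i.e. $\|u_n^-\|_{1,\mathcal{T},0}\to0$, contradicting $\delta$. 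Thus $u^-\neq0$.

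\emph{A minimizer on $\mathcal{M}^-$.} Since $u^-\neq0$, Lemma~\ref{Lem:3.3de} provides $t^*=t(u^-)>0$ with $t^*u^-\in\mathcal{M}$; as $\mathcal{M}^0=\emptyset$ for $\lambda\in(0,\lambda^*)$ (Lemma~\ref{Lem:3.3d}) and $t^*u^-$ is the maximum point of $\Phi_{u^-}$, we have $t^*u^-\in\mathcal{M}^-$. Because every $u_n^-\in\mathcal{M}^-$ maximizes $t\mapsto\mathcal{J}_{\lambda}(tu_n^-)$ over $(0,\infty)$, it follows that $\mathcal{J}_{\lambda}(t^*u_n^-)\le\mathcal{J}_{\lambda}(u_n^-)$. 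The modular parts $\varrho_{\mathcal{T}}$ and $\mathcal{F}$ are convex and continuous, hence weakly lower semicontinuous, whereas the two reaction integrals are weakly continuous by the compact embeddings; therefore $\mathcal{J}_{\lambda}$ is weakly lower semicontinuous, and since $t^*u_n^-\rightharpoonup t^*u^-$,
\[
\mathcal{J}_{\lambda}(t^*u^-)\le\liminf_{n\to\infty}\mathcal{J}_{\lambda}(t^*u_n^-)\le\liminf_{n\to\infty}\mathcal{J}_{\lambda}(u_n^-)=m_{\lambda}^-.
\]
On the other hand $t^*u^-\in\mathcal{M}^-$ gives $\mathcal{J}_{\lambda}(t^*u^-)\ge m_{\lambda}^-$, so equality holds and $t^*u^-$ is the sought minimizer; by the first step, $m_{\lambda}^-=\mathcal{J}_{\lambda}(t^*u^-)>0$. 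A final strong-convergence argument, verbatim as in Theorem~\ref{Thm:3.3a}, then shows $u_n^-\to u^-$ and $t^*=1$, identifying the minimizer with $u^-$ itself.

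\emph{Main obstacle.} The decisive difficulty is the last step: weak lower semicontinuity alone only yields $\langle J^{\prime}(u^-),u^-\rangle\le0$, so $u^-$ need not lie on $\mathcal{M}$, and it is the rescaling $t^*u^-$ together with the maximality of each $u_n^-$ along its own ray that pins the limit onto the correct branch. Making this rigorous---namely verifying that $\Phi_{u^-}$ possesses a single maximum realizing the global supremum on $(0,\infty)$, so that $\mathcal{J}_{\lambda}(t^*u_n^-)\le\mathcal{J}_{\lambda}(u_n^-)$ is legitimate---is delicate owing to the non-homogeneity of the variable exponents and the blow-up of the singular term $t^{-\beta(x)}$ as $t\to0^+$.
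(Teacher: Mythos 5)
Your overall architecture (positivity on $\mathcal{M}^-$, minimizing sequence, weak limit, rescaling along the fiber, lower semicontinuity) mirrors the paper's, but the very first step --- the proof that $\mathcal{J}_{\lambda}>0$ on $\mathcal{M}^-$ --- contains a genuine gap, and it is the step on which the whole statement rests. Your starting inequality
\[
\mathcal{J}_{\lambda}(u)\ge\Bigl(\tfrac{1}{r^+}-\tfrac{1}{s^-}\Bigr)\bigl(\rho_{\mathcal{T}}(\nabla u)+\langle\mathcal{F}^{\prime}(u),u\rangle\bigr)+\lambda\Bigl(\tfrac{1}{s^-}-\tfrac{1}{1-\beta^+}\Bigr)\int_{\Omega}m_2(x)|u|^{1-\beta(x)}dx
\]
and your upper estimate $\lambda\int_{\Omega}m_2|u|^{1-\beta}dx<\tfrac{s^+-p^-}{s^+-(1-\beta^-)}\bigl(\rho_{\mathcal{T}}(\nabla u)+\langle\mathcal{F}^{\prime}(u),u\rangle\bigr)$ on $\mathcal{M}^-$ are both correctly derived. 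But combining them, since the coefficient $\tfrac{1}{s^-}-\tfrac{1}{1-\beta^+}$ is negative, leaves the coefficient
\[
C=\Bigl(\tfrac{1}{r^+}-\tfrac{1}{s^-}\Bigr)-\Bigl(\tfrac{1}{1-\beta^+}-\tfrac{1}{s^-}\Bigr)\tfrac{s^+-p^-}{s^+-(1-\beta^-)},
\]
and this is \emph{not} positive under the stated exponent ordering: $\tfrac{1}{1-\beta^+}$ can be arbitrarily large as $\beta^+\to 1$, while $\tfrac{s^+-p^-}{s^+-(1-\beta^-)}$ stays of order one. In the paper's own Section 4 example ($p^-=2$, $r^+=10/3$, $s^-=4.8$, $s^+=5.8$, $\beta^-=0.5$, $\beta^+=0.9$) one gets $C\approx 0.09-7.0<0$. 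So your argument does not yield $\mathcal{J}_{\lambda}(u)>0$; positivity cannot be extracted from the $\mathcal{M}^-$ membership alone without an extra structural hypothesis on the exponents that is neither assumed nor generally true.

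The paper takes a different route precisely to avoid this: it keeps the factor $\lambda$ attached to the singular term and controls $\int_{\Omega}m_2|u|^{1-\beta}dx\le c\|u\|^{1-\beta^-}_{1,\mathcal{T},0}$ via the embedding of Lemma \ref{Lem:3.3a}, while bounding $\rho_{\mathcal{T}}(\nabla u)\ge\|u\|^{p^-}_{1,\mathcal{T},0}\ge\|u\|^{1-\beta^-}_{1,\mathcal{T},0}$ for $\|u\|_{1,\mathcal{T},0}>1$. The resulting coefficient $\bigl(\tfrac{1}{r^+}-\tfrac{1}{s^-}\bigr)+\lambda\bigl(\tfrac{1}{s^-}-\tfrac{1}{1-\beta^+}\bigr)$ is positive once $\lambda<\tfrac{(s^--r^+)(1-\beta^+)}{r^+(s^--(1-\beta^+))}$; this smallness requirement is exactly where the threshold $\lambda^*$ of the theorem comes from, and your version discards it. The remainder of your argument (nontriviality of the weak limit via Lemma \ref{Lem:3.3bd}, rescaling $t^*u^-$ onto $\mathcal{M}^-$, weak lower semicontinuity of the modular parts plus weak continuity of the reaction terms) is in the same spirit as the paper's contradiction argument with Fatou's lemma and would be acceptable once the positivity step is repaired, though, as you yourself note, the claim that each $u_n^-$ globally maximizes its fiber map still needs the fibering analysis of Lemma \ref{Lem:3.3de} to be invoked explicitly.
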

\begin{proof}
Let $u \in \mathcal{M}$. Without loss of generality, we may assume that $\|u\|_{1,\mathcal{T},0}>1$ since the other case leads to the same result. Employing (\ref{e3.2a}), and using Lemmas \ref{Lem:3.3a} and \ref{Lem:3.3c} and Remark \ref{Rem:3.4ab} provides
\begin{align}\label{e3.24}
\mathcal{J}_{\lambda}(u)&\geq\left(\left(\frac{1}{r^+}-\frac{1}{s^-}\right)+\lambda\left(\frac{1}{s^-}-\frac{1}{1-\beta^+}\right)\right)\|u\|^{1-\beta^-}_{1,\mathcal{H},0}
+\frac{1}{x_{*}^\tau}\left(\frac{1}{r^+}-\frac{1}{s^-}\right)\|u\|^{p^-}_{\mathcal{H}}.
\end{align}
If we let $\lambda<\frac{(s^--r^+)(1-\beta^+)}{r^+(s^--(1-\beta^+))}$ in (\ref{e3.24}), we get $\mathcal{J}_{\lambda}(u)>0$. However, since sets $\mathcal{M}^+$ and $\mathcal{M}^-$ are disjoint, and $\mathcal{J}_{\lambda}(u)<0$ on $\mathcal{M}^+$, we must have $u \in \mathcal{M}^-$.\\
Considering the fact that $\mathcal{J}_{\lambda}$ is coercive and bounded below on $\mathcal{M}^-$, we can find a real number $m_{\lambda}^-$ with $m_{\lambda}^-=\inf_{u\in \mathcal{M}^-}\mathcal{J}_{\lambda}(u)>0$. This verifies the existence of a minimizing sequence $(u^-_n)\subset \mathcal{M}^-$ such that $\lim_{n\rightarrow\infty}\mathcal{J}_{\lambda}(u^-_{n})=\inf_{u\in \mathcal{M}^-}\mathcal{J}_{\lambda}(u)=m_{\lambda}^->0$. By the coercivity of $\mathcal{J}_{\lambda}$ and the reflexivity of $W_0^{1,\mathcal{T}}(\Omega)$, $u^-_{n}\rightharpoonup u^-$ (weakly) in  $W_0^{1,\mathcal{T}}(\Omega)$. \\

Note that if $u^- \in \mathcal{M}^-$, then there exists a constant $t_0>0$ such that $t_0u^- \in \mathcal{M}^-$ and $\mathcal{J}_{\lambda}(u^-)\geq \mathcal{J}_{\lambda}(t_0u^-)$. To see this, we use (\ref{e3.2bc})
\begin{align}\label{e3.25}
\langle J^{\prime}(t_0u^-), t_0u^-\rangle &= \langle \varrho^{\prime\prime}_{\mathcal{T}}(t_0u^-),t_0u^-\rangle +\langle \mathcal{F}^{\prime\prime}(t_0u^-), t_0u^-\rangle-\int_{\Omega}m_1(x)s(x)|t_0u^-|^{s(x)}dx\nonumber\\
&-\lambda\int_{\Omega}m_2(x)(1-\beta(x))|t_0u^-|^{1-\beta(x)} dx \nonumber\\
&\leq r^+t_0^{r^{+}}\left(\rho_{\mathcal{T}}(\nabla u^-)+r^+\langle \mathcal{F}^{\prime}(u^-), u^-\rangle\right) -t_0^{s^{-}}\int_{\Omega}m_1(x)s(x)|u^-|^{s(x)}dx\nonumber\\
&-t_0^{1-\beta^{+}}\lambda\int_{\Omega}m_2(x)(1-\beta(x))|u^-|^{1-\beta(x)} dx<0
\end{align}
which means that $t_0u^- \in \mathcal{M}^-$ for $t_0>1$ (the other case follows the same way since $0<1-\beta^{+}<s^-$).\\
Next, we shall show that $u^-_{n}\to u^-$ in  $W_0^{1,\mathcal{T}}(\Omega)$. We argue by contradiction, and assume that $u^-_{n} \nrightarrow u^-$  in  $W_0^{1,\mathcal{T}}(\Omega)$. Therefore,
\begin{align}\label{e3.26}
\mathcal{J}_{\lambda}(t_0u^-)&\leq
\frac{t_0^{r^+}}{p^-}\left(\rho_{\mathcal{T}}(\nabla u^-)+\langle \mathcal{F}^{\prime}(u^-), u^-\rangle\right)\nonumber\\
&-\frac{t_0^{s^-}}{s^+}\int_{\Omega}m_1(x)|u^-|^{s(x)}dx-\frac{\lambda t_0^{1-\beta^+}}{1-\beta^-}\int_{\Omega}m_2(x)|u^-|^{1-\beta(x)}dx.
\end{align}
Using the compact embeddings in Lemmas \ref{Lem:3.3a}-\ref{Lem:3.3b}, and Fatou's lemma we obtain
\begin{align}\label{e3.27}
\mathcal{J}_{\lambda}(t_0u^-)&\leq
\frac{t_0^{r^+}}{p^-}\left(\rho_{\mathcal{T}}(\nabla u^-)+\langle \mathcal{F}^{\prime}(u^-), u^-\rangle\right)\nonumber\\
&-\frac{t_0^{s^-}}{s^+}\int_{\Omega}m_1(x)|u^-_{n}|^{s(x)}dx-\frac{\lambda t_0^{1-\beta^+}}{1-\beta^-}\int_{\Omega}m_2(x)|u^-_{n}|^{1-\beta(x)}dx\nonumber\\
&\leq
\lim_{n \to \infty}\left[\frac{t_0^{r^+}}{p^-}\left(\rho_{\mathcal{T}}(\nabla u^-_{n})+\langle \mathcal{F}^{\prime}(u^-_{n}), u^-_{n}\rangle\right)\right.\nonumber\\
&\left.-\frac{t_0^{s^-}}{s^+}\int_{\Omega}m_1(x)|u^-_{n}|^{s(x)}dx-\frac{\lambda t_0^{1-\beta^+}}{1-\beta^-}\int_{\Omega}m_2(x)|u^-_{n}|^{1-\beta(x)}dx\right]\nonumber\\
&< \lim_{n \to \infty}\mathcal{J}_{\lambda}(t_0u_n^-)\leq \lim_{n \to \infty}\mathcal{J}_{\lambda}(u_n^-)=m_{\lambda}^-.
\end{align}
However, (\ref{e3.27}) implies a contradiction due to the definition of $m_{\lambda}^-$. Therefore,
\begin{align}\label{e3.28}
\mathcal{J}_{\lambda}(u^-)=\lim_{n\rightarrow\infty}\mathcal{J}_{\lambda}(u^-_{n})=\inf_{u\in \mathcal{M}^-}\mathcal{J}_{\lambda}(u)=m_{\lambda}^->0.
\end{align}
\end{proof}

\begin{corollary}\label{Cor:4.1}
Note that since $\mathcal{J}_{\lambda}(u^\pm)=\mathcal{J}_{\lambda}(|u^\pm|)$, we may assume $u^\pm \geq 0$. However, by Lemma \ref{Lem:3.3bd}, we must have $u^\pm > 0$.  Moreover,  by definition of $\mathcal{M}^-$ and $\mathcal{M}^+$,  $|u^\pm| \in \mathcal{M}^\pm$, and hence, $u^\pm > 0$ are two distinct positive weak solutions to problem (\ref{e1.1}).
\end{corollary}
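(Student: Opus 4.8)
The plan is to combine the invariance of the energy functional under $u\mapsto|u|$ with the a priori lower bound of Lemma~\ref{Lem:3.3bd}, and then to promote the two constrained minimizers to genuine weak solutions. First I would record that every term defining $\mathcal{J}_{\lambda}$ depends on $u$ only through $|u|$ and $|\nabla u|$: the functionals $\varrho_{\mathcal{T}}$ and $\mathcal{F}$ are built from $|\nabla u|^{p(x)},|\nabla u|^{q(x)},|\nabla u|^{r(x)}$ and from $|u|^{p(x)},|u|^{q(x)},|u|^{r(x)}$, while the remaining integrals carry $|u|^{s(x)}$ and $|u|^{1-\beta(x)}$. Since $|u|\in W_0^{1,\mathcal{T}}(\Omega)$ whenever $u$ is, and $|\nabla|u||=|\nabla u|$ a.e., one gets $\mathcal{J}_{\lambda}(|u|)=\mathcal{J}_{\lambda}(u)$ and, reading off \eqref{e3.2a} and \eqref{e3.2bc}, also $J(|u|)=J(u)$ and $\langle J'(|u|),|u|\rangle=\langle J'(u),u\rangle$. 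Consequently $|u^{\pm}|$ lies in the very same set $\mathcal{M}^{\pm}$ as $u^{\pm}$ and realizes the same value $m_{\lambda}^{\pm}$; as $m_{\lambda}^{\pm}=\inf_{\mathcal{M}^{\pm}}\mathcal{J}_{\lambda}$, the nonnegative function $|u^{\pm}|$ is itself a minimizer, so we may replace $u^{\pm}$ by $|u^{\pm}|$ and assume $u^{\pm}\geq 0$.

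Next I would rule out the trivial function. The minimizing sequences $(u_n^{\pm})\subset\mathcal{M}^{\pm}\subset\mathcal{M}$ satisfy $\|u_n^{\pm}\|_{1,\mathcal{T},0}\geq\delta>0$ by Lemma~\ref{Lem:3.3bd}; passing to the strong limits obtained in Theorems~\ref{Thm:3.3a}--\ref{Thm:3.3b} preserves this bound, so $\|u^{\pm}\|_{1,\mathcal{T},0}\geq\delta>0$ and hence $u^{\pm}\not\equiv 0$.

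The substantive step is to show that these constrained minimizers are unconstrained critical points, that is, weak solutions in the sense of Definition~\ref{Def:3.1}. Because $\mathcal{M}^0=\emptyset$ for $\lambda\in(0,\lambda^*)$ by Lemma~\ref{Lem:3.3d}, we have $\langle J'(u^{\pm}),u^{\pm}\rangle\neq 0$, so the constraint $J=0$ is nondegenerate at $u^{\pm}$ and the Lagrange multiplier associated with the minimization is forced to vanish, giving $\mathcal{J}_{\lambda}'(u^{\pm})=0$. The obstacle I expect here is that the singular term $u^{-\beta(x)}$ renders $\mathcal{J}_{\lambda}$ non-differentiable, so the multiplier principle cannot be invoked verbatim; instead I would test the minimality against admissible perturbations $u^{\pm}+t\varphi$ with $\varphi\geq 0$ (and $\varphi=u^{\pm}$ to capture the sign), derive a one-sided variational inequality, and then recover the equality \eqref{e3.2} by Fatou's lemma, using that $m_2$ has compact support and $\beta(x)<1$ to keep $\int_{\Omega}m_2(x)(u^{\pm})^{-\beta(x)}\varphi\,dx$ under control exactly as in Lemma~\ref{Lem:3.3a}. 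This uniform control of the singular integral is the delicate point of the whole argument.

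Finally I would upgrade $u^{\pm}\geq 0,\ u^{\pm}\not\equiv 0$ to the strict positivity $\einf_{Q}u^{\pm}>0$ on every $Q\Subset\Omega$ required by Definition~\ref{Def:3.1}. Since $u^{\pm}$ solves the Euler--Lagrange equation whose right-hand side is bounded below by the nonnegative singular source $\lambda m_2(x)(u^{\pm})^{-\beta(x)}\geq 0$, a strong maximum principle / Harnack estimate for the multi-phase operator (cf.\ \cite{dai2024regularity}) yields local positivity. Distinctness is then immediate: $\mathcal{J}_{\lambda}(u^{+})=m_{\lambda}^{+}<0<m_{\lambda}^{-}=\mathcal{J}_{\lambda}(u^{-})$ forces $u^{+}\neq u^{-}$, so \eqref{e1.1} admits two distinct positive weak solutions, completing the proof.
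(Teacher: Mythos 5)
Your proposal is correct in outline and in fact supplies more than the paper does: the paper's Corollary~\ref{Cor:4.1} is essentially asserted rather than proved, resting on three one-line claims (invariance of $\mathcal{J}_{\lambda}$ under $u\mapsto|u|$, positivity ``by Lemma~\ref{Lem:3.3bd}'', and membership $|u^{\pm}|\in\mathcal{M}^{\pm}$), plus the tacit separation of the energy levels $m_{\lambda}^{+}<0<m_{\lambda}^{-}$ for distinctness. Your first, second and last steps coincide with this. Where you genuinely diverge is in recognizing the two points the paper leaves unaddressed. First, you correctly flag that a minimizer of $\mathcal{J}_{\lambda}$ on the Nehari constraint is not automatically a weak solution in the sense of Definition~\ref{Def:3.1}: because of the term $u^{-\beta(x)}$ the functional is not $C^{1}$, the Lagrange multiplier rule cannot be applied verbatim, and one must pass through one-sided perturbations $u^{\pm}+t\varphi$, $\varphi\geq 0$, a variational inequality, and Fatou's lemma. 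The paper contains no such argument, so your sketch (standard in the singular Nehari literature) is a necessary addition rather than an alternative route. Second, you are right that Lemma~\ref{Lem:3.3bd} only yields $\|u^{\pm}\|_{1,\mathcal{T},0}\geq\delta>0$, i.e.\ $u^{\pm}\not\equiv 0$; it cannot deliver the pointwise bound $\einf_{Q}u^{\pm}>0$ on every $Q\Subset\Omega$ that Definition~\ref{Def:3.1} demands, and the paper's appeal to that lemma for strict positivity is a non sequitur. Your invocation of a strong maximum principle or Harnack inequality for the multi-phase operator is the correct repair, though note that such a result is not actually established in \cite{dai2024regularity} for this operator with the Hardy-type zero-order terms, so this step would still need to be carried out, not merely cited. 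In short: same skeleton as the paper, but you have identified and at least sketched the two missing load-bearing arguments; what remains to make either version complete is the detailed execution of the variational inequality for the singular term and of the local positivity estimate.
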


\section{Application}
In this section, we provide an example to illustrate the main results of the paper.
Assume the following:
\begin{itemize}
\item [$\bullet$] $\Omega=B(0,1) \subset \mathbb{R}^3$ (a ball of radius 1).
\item [$\bullet$] $p(x)=2+\frac{|x|}{3}$, $q(x)=2.5+\frac{|x|}{3}$, $r(x)=3+\frac{|x|}{3}$, $s(x)=4.8+\sin(\pi|x|^2)$, $\beta(x)=0.5+0.4|x|$.
\item [$\bullet$] $m_1(x)=\chi_{B(0,1/2)}(x)$, $m_2(x)=e^{-|x|^2}$, $\mu_1(x)=\frac{1}{1+|x|}$, $\mu_2(x)=\frac{1}{2+|x|}$,  $\lambda>0$.
\end{itemize}
Then problem (\ref{e1.1}) turns into
\begin{equation}\label{e4.1}
\begin{cases}
\begin{array}{rlll}
&-\mathrm{div}\left(|\nabla u|^{\frac{|x|}{3}}\nabla u+\frac{1}{1+|x|}|\nabla u|^{0.5+\frac{|x|}{3}}\nabla u+\frac{1}{2+|x|}|\nabla u|^{1+\frac{|x|}{3}}\nabla u\right)\\
&+\frac{|u|^{\frac{|x|}{3}}u}{|x|^{2+\frac{|x|}{3}}}+\frac{1}{1+|x|}\frac{|u|^{0.5+\frac{|x|}{3}}u}{|x|^{2.5+\frac{|x|}{3}}}+\frac{1}{2+|x|}\frac{|u|^{1+\frac{|x|}{3}}u}{|x|^{3+\frac{|x|}{3}}}\\
&=\chi_{B(0,1/2)}(x)|u|^{2.8+\sin(\pi|x|^2)}u+\lambda e^{-|x|^2}{u^{-(0.5+0.4|x|)}} \quad \text{ in } B(0,1),\\
&u>0  \qquad \quad \quad \quad \quad \quad \quad \quad \quad \quad \quad \quad \quad \quad \quad \quad \quad \quad \quad \text{ in } B(0,1),\\
&u=0  \qquad \quad \quad \quad \quad \quad \quad \quad \quad \quad \quad \quad \quad \quad \quad \quad \quad \quad \quad  \text{ on } B(0,1). \tag{$\mathcal{P_\lambda}_{*}$}
\end{array}
\end{cases}
\end{equation}
This model could describe fluid flow in a heterogeneous porous medium, more specifically, groundwater flow in heterogeneous media, where:
\newline
\begin{itemize}
\item [$\bullet$] $u(x)$: hydraulic head (pressure),
\item [$\bullet$] $\nabla u(x)$: hydraulic gradient, which determines the direction and speed of groundwater flow,
\item [$\bullet$] $\chi_{B(0,1/2)}(x)|u|^{2.8+\sin(\pi|x|^2)}u$: nonlinear reactions in localized zones (e.g., areas with chemical contaminants),
\item [$\bullet$] $\lambda e^{-|x|^2}{u^{-(0.5+0.4|x|)}}$: singular sinks modeling wells or fractures with high extraction rates,
\item [$\bullet$]$\mu_1(x)=\frac{1}{1+|x|}$, $\mu_2(x)=\frac{1}{2+|x|}$: heterogeneity factors accounting for permeability variations,
\item [$\bullet$] $\frac{|u|^{\frac{|x|}{3}}u}{|x|^{2+\frac{|x|}{3}}}+\frac{1}{1+|x|}\frac{|u|^{0.5+\frac{|x|}{3}}u}{|x|^{2.5+\frac{|x|}{3}}}+\frac{1}{2+|x|}\frac{|u|^{1.5+\frac{|x|}{3}}u}{|x|^{3+\frac{|x|}{3}}}$: diffusion impeded by singularities (e.g., well bores or fractures).
\end{itemize}
It can be easily derived from the functions assumed above that $p^-=2$, $p^+=2+1/3$, $q^-=2.5$, $q^+=2.5+1/3$, $r^-=3$, $r^-=3+1/3$ $s^-=4.8$, $s^+=5.8$, $\beta^-=0.5$, $\beta^+=0.9$, and $s^+<p^*(x)$ since $6<\frac{Np(x)}{N-p(x)}<\frac{69}{7}$. Thus conditions $(A_{\beta})$, $(H_{1})$ and $(H_{2})$ hold. If we let $\alpha(x)=\gamma(x)=|x|^2+1$, $(A_{\alpha})$ and $(A_{\gamma})$ hold, too. Lastly, conditions $(s^+-p^-)(1-\beta^-)<p^-(s^--r^+)$ and $s^-+\beta^+\leq s^++\beta^-$ are satisfied as well. \newline In conclusion, by Theorems \ref{Thm:3.3a}-\ref{Thm:3.3b} and Corollary \ref{Cor:4.1}, there is a parameter $\lambda_*$ such that for any $\lambda \in (0,\lambda_*)$, problem (\ref{e4.1}) has at least two positive solutions on $\mathcal{M}$.

\section*{Conflict of Interest}
The author declared that he has no conflict of interest.

\section*{Data Availability}
No data is used to conduct this research.

\section*{Funding}
This work was supported by Athabasca University Research Incentive Account [140111 RIA].

\section*{ORCID}
https://orcid.org/0000-0002-6001-627X

\bibliographystyle{tfnlm}
\bibliography{references}

\end{document}